\newtheorem{theorem}{Theorem}[section]
\newtheorem{lemma}[theorem]{Lemma}
\newtheorem{corollary}[theorem]{Lemma}
\theoremstyle{definition}
\newtheorem{definition}[theorem]{Definition}
\newtheorem{notation}[theorem]{Notation}
\theoremstyle{remark}
\newtheorem{remark}[theorem]{Remark}
\numberwithin{equation}{section}
\begin{document}

\title{A Fractal Operator Associated to Bivariate Fractal Interpolation Functions}




\author{S. Verma}
\address{Department of Mathematics, IIT Delhi, New Delhi, India 110016 }
\email{saurabh331146@gmail.com}
\author{P. Viswanathan}
\address{Department of Mathematics, IIT Delhi, New Delhi, India 110016}

\email{viswa@maths.iitd.ac.in}




\keywords{Fractal interpolation surfaces, Bivariate fractal functions, Fractal operator, Approximation}

\begin{abstract}

A general framework to construct fractal interpolation surfaces (FISs) on rectangular grids was presented and bilinear FIS was deduced by  Ruan and Xu [Bull. Aust. Math. Soc. 91(3), 2015, pp. 435-446]. From the view point of operator theory and the stand point of developing some approximation aspects, we revisit the aforementioned construction to obtain a fractal analogue of a prescribed continuous function defined on a rectangular region in $\mathbb{R}^2$. This approach leads to a bounded linear operator  analogous to the  so-called $\alpha$-fractal operator associated with the univariate fractal interpolation function. Several elementary properties of this bivariate fractal operator are reported. We extend the fractal operator to the $\mathcal{L}^p$-spaces for $1 \le p < \infty$. Some approximation aspects of the bivariate continuous fractal functions are also discussed.

\end{abstract}

\maketitle


.

\section{Introduction and Preliminaries}
To fulfill a preparatory role, we shall take a cursory look at the requisite  basic concepts in the theory of fractal interpolation and approximation. Our discussion will be interspersed with an, albeit incomplete, list of references.

Assume that $N>2$ and $\{(x_i,y_i)\in \mathbb{R}^2:i=0,1, \dots, N\}$ is such that  $x_0<x_2<\dots<x_N$. Set $I=[x_0,x_N]$ and for $i=1,2,\dots, N$, let $L_i:I \to [x_{i-1},x_i]$ be a contractive homeomorphism such that
$$L_i(x_0)=x_{i-1}, \quad L_i(x_N)=x_i.$$
Let $F_i:I \times \mathbb{R} \to \mathbb{R}$ be a continuous map which satisfies the following
\begin{enumerate}
\item $F_i(x_0,y_0)= y_{i-1}$ and $F_i(x_N, y_N)= y_i,$
\item $|F_i(x,y)-F_i(x,y')|\leq r_i|y-y'|$ for all $x \in I$ and  $y,y' \in \mathbb{R}$, where $r_i \in [0,1).$
\end{enumerate}
 Define $W_i(x,y)=\big(L_i(x), F_i(x,y)\big)$ for $(x,y) \in I \times \mathbb{R}$. Then, $\{I \times \mathbb{R}; W_1, W_2,\dots, W_N  \}$ is an \emph{iterated function system}, IFS for short \cite{H}. Barnsley \cite{MF1} proved that its unique invariant set $G$ is the graph of a continuous function $g:I \to \mathbb{R}$, referred to as  a \emph{fractal interpolation function} (FIF) such that for every $i=0,1, \dots,N$ and $x \in I$, we have $g(x_i)=y_i$ and $g$ satisfies a self-referential equation  $$g(L_i(x))=F_i\big(x,g(x)\big), \quad i=1,2,\dots,N.$$ As a new class of interpolants, FIFs demonstrated  more advantages than the classical interpolants  in fitting and approximation of naturally occurring functions that display some kind of self-similarity. Consequently, numerous papers on this subject have been published so far and we refer the reader to \cite{BM,AKBCK,Luor,Ri} for some discussions, recent results and further references.

\par Most widely studied FIFs are obtained by considering each $L_i$ and $F_i$ in the following form.
$$L_i(x) = a_i x +b_i, \quad F_i(x,y)=\alpha_i y + q_i(x), $$
where $q_i: I \to \mathbb{R}$ is suitable continuous map and $\alpha_i \in \mathbb{R}$ is such that $|\alpha_i|<1$. Let $f : I \to \mathbb{R}$ be a prescribed continuous function.
By taking $$q_i(x) = f(L_i(x))-\alpha_i b(x),$$ where  $b$ is an appropriate continuous map, Navascu\'es \cite{M2} observed that fractal analogue of a given continuous function $f$ can be constructed. The FIF associated with such a IFS is called the \emph{$\alpha$-fractal function} for $f$ and is denoted by $f_{\Delta, b}^{\alpha}$, where $\alpha=(\alpha_1, \alpha_2, \dots, \alpha_N) \in (-1,1)^{N}$ is referred to as the \emph{scale vector}. Such an approach leads to an operator $$\mathcal{F}_{\Delta, b}^{\alpha}: f \mapsto f_{\Delta, b}^{\alpha},$$ which depends on a chosen partition $\Delta$, function $b$ and constants $\alpha_i$, and which sends each $f$ to its fractal perturbation  $f_{\Delta, b}^{\alpha}$. If $b=Lf$, where $L$ is a bounded linear operator, then the  operator $\mathcal{F}_{\Delta, L}^{\alpha}$ is a bounded linear operator termed the \emph{fractal operator}. This operator formulation of fractal functions somewhat hidden in the construction of FIFs enables them to interact with other traditional branches of mathematics including operator theory, complex analysis,
harmonic analysis and approximation theory \cite{M2,M1,M6,M5,NC,M9}. More recently, the second author and
collaborators identified suitable values of the parameters so that the $\alpha$-fractal function $f_{\Delta,b}^\alpha$ preserves
the shape properties inherent in the source function $f$ \cite{M10}. This lead to the intersection of
 the two different traditions - theory of FIF and univariate constrained approximation -
that were otherwise developing independently.
\par
 In the attempt to different extensions of FIF, it is natural to seek FIFs in higher-dimensional cases, in particular, the two-dimensional case aiming
at a realistic modeling of rough surfaces. While it is straightforward to define a similar IFS as that in the one-dimensional case, it is hard to ensure that the invariant set of such IFS is the graph of a continuous function. There are various approaches for the construction
of fractal interpolation surface (FIS) (an, albeit incomplete list of references \cite{D,AKBCK2,Dalla,PM,Ruan}), each with their particular strengths and weaknesses. More recently, we have developed a few constructive approaches
for solving constrained interpolation by fractal surfaces \cite{V4,V5}. However, a unified approach for
developing fractal versions of various traditional methods of constrained bivariate interpolation is
strongly felt.
\par
To summarize, the approach to the univariate FIF wherein a source function is perturbed to obtain its  fractal analogue and the fractal operator emerged thereby
 (i) enabled FIF to interact with other
branches of mathematics (ii) played a key role in developing fractal versions of fundamental
theorems in constrained approximation (iii) provided a unified approach to various shape
preserving fractal interpolation schemes (iv) reinformed the ubiquity of fractal functions as claimed by the fractal researchers. On the other hand, to the best of our knowledge, there
is no research reported in the direction of  bivariate analogue of the $\alpha$-fractal operator arising from FISs. The reason perhaps is that a general framework for the construction of FISs was missing until the researches reported in \cite{Ruan}.  Motivated by these
facts, in broad terms, the object of the current paper is to provide foundational aspects of
a bivariate version of the $\alpha$-fractal function and fractal operator. To be precise, the current paper is a realization that
the general framework to generate FIS given in \cite{Ruan} can be used to obtain a fractal analogue of a continuous real-valued function defined on a rectangular region in $\mathbb{R}^2$.

\par
Let $-\infty<x_0<x_N< \infty$, $-\infty<y_0<y_M< \infty$, $I=[x_0,x_N]$ and $J=[y_0,y_M]$. As is customary, we denote by  $\mathcal{C}(I \times J, \mathbb{F})$, the Banach space of all continuous functions $f: I \times J \to \mathbb{F}$ supplied with the uniform norm, where $\mathbb{F}$ is the real field $\mathbb{R}$ or the complex field $\mathbb{C}$. In Section \ref{BFOsecb}, we shall define and establish some elementary properties of the fractal operator $\mathcal{F}^{\alpha}_{\triangle,L}: \mathcal{C}(I \times J, \mathbb{R}) \rightarrow \mathcal{C}(I \times J, \mathbb{R})$ that maps a bivariate continuous function $f(x,y)$ to its fractal analogue $f^{\alpha}_{\triangle,L}(x,y)$. Using the usual density arguments, we extend our fractal operator to $\mathcal{L}^p(I \times J, \mathbb{C})$ in Section \ref{BFOsecc}. Some approximation aspects of the continuous bivariate $\alpha$-fractal functions
and bivariate ``fractal polynomials" are investigated in Section \ref{BFOsecd}.   Our approach reveals a natural kinship with the research works of Navascu\'{e}s in the field of univariate $\alpha$-fractal functions scattered in the literature; see, for instance, \cite{M1,M2,M5}. However, we feel that the new approach to bivariate fractal functions introduced herein with only the surface being scratched, finds potential applications in approximation problems. Let us emphasize that the bivariate version is proposed
not merely as an extension of the univariate case; but with an eye towards extending fractal surfaces to the
territory of constrained approximation. We envisage  that  the constrained approximation with fractal surfaces is a
problem involving larger resources than their traditional counterparts.

\section{Auxiliary Apparatus}
In this section, we revisit a general framework to construct FISs on rectangular grids; for details the reader is referred to \cite{Ruan}.\\
Let $I=[x_0,x_N]$ and $J=[y_0,y_M].$ Suppose that an interpolation data set \\$\{(x_i,y_j,z_{ij}) \in \mathbb{R}^3 : i=0,1,\dots,N; j=0,1,\dots,M\}$ such that $x_0< x_1 < \dots <x_N$ and $y_0<y_1<\dots <y_M$  is provided. Following the notation in \cite{Ruan}, we write $\Sigma_N=\{1,2,\dots,N\},$ $ \Sigma_{N,0}=\{0,1,\dots N \},$ $\partial \Sigma_{N,0}=\{0,N\} $ and int$\Sigma_{N,0}=\{1,2,\dots,N-1\}.$ Similarly, we can define $\Sigma_M, \Sigma_{M,0}, \partial \Sigma_{M,0}$ and int$\Sigma_{M,0}.$ Let $I_i=[x_{i-1},x_i]$ and $J_j=[y_{j-1},y_j]$ for $i \in \Sigma_N$ and $j \in \Sigma_M.$ For any $i \in \Sigma_N,$ let $u_i:I \rightarrow I_i$ be a contractive homeomorphism satisfying
\begin{equation*}
\left\{
   \begin{split}
   &~ u_i(x_0)=x_{i-1}, \quad u_i(x_N)=x_i, ~~\text{if $i$ is odd},\\
        &~ u_i(x_0)=x_i, \quad u_i(x_N)=x_{i-1},~~ \text{if $i$ is even},~~\text{and} \\
          &~   |u_i(x_1)-u_i(x_2)| \le \alpha_i|x_1 -x_2|, ~~~~ \forall ~~x_1, x_2 \in I,
             \end{split}\right.
               \end{equation*}
where $0 < \alpha_i < 1$ is a given constant. Similarly, for any $j \in \Sigma_M,$ let $v_j:J \rightarrow J_j$ be a contractive homeomorphism satisfying

  \begin{equation*}
                \left\{
   \begin{split}
                &~ v_j(y_0)=y_{j-1}, \quad v_j(y_M)=y_j,\text{ if $j$ is odd},\\
                 &~ v_j(y_0)=y_j, \quad v_j(y_N)=y_{j-1}, ~~ \text{if $j$ is even},\\
                &~ |v_j(y_1)-v_j(y_2)| \le \beta_j|y_1 -y_2|, ~~~~ \forall ~~y_1, y_2 \in J,
                 \end{split}\right.
                   \end{equation*}
where $0 < \beta_j < 1$ is a given constant. By the definitions of $u_i$ and $v_j,$ it is easy to check that
\begin{equation*}
                \begin{aligned}
                 u^{-1}_i(x_i)=u^{-1}_{i+1}(x_i), ~~~~\forall ~~i \in \text{int}\Sigma_{N,0},
                 \end{aligned}
                   \end{equation*}
and
 \begin{equation*}
                 \begin{aligned}
                 v^{-1}_j(y_j)=v^{-1}_{j+1}(y_j),~~~~ \forall ~~j \in \text{int}\Sigma_{M,0}.
                  \end{aligned}
                    \end{equation*}
Let $\tau: \mathbb{Z}\times \{0,N,M\} \rightarrow \mathbb{Z}$ be defined by
\begin{equation*}
\tau(i,0)=
                   \begin{cases} i-1,~~~~~~~~ \tau(i,N)=\tau(i,M)=i,  \text{ if $i$ is odd}\\
                      i,~~~~~~~~ \tau(i,N)= \tau(i,M)=i-1, \text{ if $i$ is even}.
                  \end{cases}
                    \end{equation*}
 Let $K=I \times J \times \mathbb{R}.$ For each $(i,j) \in \Sigma_N \times \Sigma_M,$ let $F_{ij}:K \rightarrow \mathbb{R}$ be a continuous function satisfying $$ F_{ij}(x_k,y_l,z_{kl})=z_{\tau(i,k),\tau(j,l)},  ~~~~~~ \forall~~ (k,l) \in \partial \Sigma_{N,0} \times  \partial \Sigma_{M,0}, \text{and}$$  $$|F_{ij}(x,y,z')- F_{ij}(x,y,z'')| \le \gamma_{ij} |z' - z''|, ~~~~~~ \forall~~ (x,y) \in I \times J ,~~\text{and}~~ z',z'' \in \mathbb{R},$$ where $0 < \gamma_{ij} < 1$ is a given constant.\\
 Finally, for each $(i,j) \in \Sigma_N \times \Sigma_M,$ we define $W_{ij}:K \rightarrow I_i \times J_j \times \mathbb{R}$ by $$ W_{ij}(x,y,z)=\big(u_i(x),v_j(y),F_{ij}(x,y,z)\big).$$
Then $\{K, W_{ij}: (i,j) \in\Sigma_N \times \Sigma_M \}$ is an IFS.

\begin{theorem} \cite{Ruan} \label{BFOTHM1}
Let $\{K, W_{ij}: (i,j) \in \Sigma_N \times \Sigma_M \}$ be the IFS defined as above. Assume that the map $F_{ij}$, $(i,j) \in \Sigma_N \times \Sigma_M$ satisfies the following matching conditions
\begin{enumerate}
\item  for all $i \in \text{int}\Sigma_{N,0}, j \in \Sigma_M$ and $x^*=u^{-1}_i(x_i)=u^{-1}_{i+1}(x_i),$
$$F_{ij}(x^*,y,z)=F_{i+1,j}(x^*,y,z),~~  \forall~ y \in J, z \in \mathbb{R}, ~\text{and} $$
 \item for all $i \in \Sigma_N, j \in \text{int} \Sigma_{M,0}$ and $y^*=v^{-1}_j(y_j)=v^{-1}_{j+1}(y_j),$ $$F_{ij}(x,y^*,z) = F_{i,j+1}(x,y^*,z),~~ \forall~ x \in I, z \in \mathbb{R}.$$
     \end{enumerate}
Then there exists a unique continuous function $f:I \times J \rightarrow \mathbb{R}  $ such that $f(x_i,y_j)=z_{ij}$ for all $(i,j) \in \Sigma_{N,0} \times \Sigma_{M,0}$ and $G= \cup_{(i,j) \in \Sigma_{N} \times \Sigma_{M}} W_{ij}(G),$ where $G=\big\{(x,y,f(x,y)):(x,y) \in I \times J \big\}$ is the graph of $f.$
\end{theorem}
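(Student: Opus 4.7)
My plan is to follow the Read--Bajraktarević paradigm familiar from the univariate theory and to realize $f$ as the unique fixed point of a contraction on a complete function space. Concretely, I would work with the closed subspace
$$\mathcal{C}^*(I \times J) := \{g \in \mathcal{C}(I \times J, \mathbb{R}) : g(x_k, y_l) = z_{kl} \text{ for all } (k,l) \in \partial\Sigma_{N,0} \times \partial\Sigma_{M,0}\},$$
which is complete under the uniform metric, and define the Read--Bajraktarević operator $T$ on $\mathcal{C}^*(I \times J)$ by
$$(Tg)(x,y) := F_{ij}\bigl(u_i^{-1}(x),\, v_j^{-1}(y),\, g(u_i^{-1}(x), v_j^{-1}(y))\bigr), \quad (x,y) \in I_i \times J_j,$$
for each $(i,j) \in \Sigma_N \times \Sigma_M$.

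The main obstacle is to verify that $T$ is well defined, i.e., that this piecewise prescription assembles into a single continuous function on $I \times J$ that again lies in $\mathcal{C}^*(I \times J)$. On the open cells $\mathrm{int}(I_i) \times \mathrm{int}(J_j)$ continuity is automatic from that of $u_i^{-1},\, v_j^{-1},\, F_{ij},$ and $g$. The delicate part is consistency on the shared edges $\{x_i\} \times J_j$ for $i \in \mathrm{int}\,\Sigma_{N,0}$ and $I_i \times \{y_j\}$ for $j \in \mathrm{int}\,\Sigma_{M,0}$: the two candidate values coming from the neighbouring cells must agree, and this is exactly what the matching conditions (1) and (2) together with the identities $u_i^{-1}(x_i) = u_{i+1}^{-1}(x_i)$ and $v_j^{-1}(y_j) = v_{j+1}^{-1}(y_j)$ guarantee. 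At the four corners $(x_k, y_l)$ with $(k,l) \in \partial\Sigma_{N,0} \times \partial\Sigma_{M,0}$, the prescribed values $F_{ij}(x_k, y_l, z_{kl}) = z_{\tau(i,k), \tau(j,l)}$ combined with the definition of $\tau$ will imply $(Tg)(x_k, y_l) = z_{kl}$, so indeed $Tg \in \mathcal{C}^*(I \times J)$.

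Once $T$ is well defined, the Lipschitz hypothesis on $F_{ij}$ in the third variable yields, for all $g_1, g_2 \in \mathcal{C}^*(I \times J)$,
$$\|Tg_1 - Tg_2\|_\infty \le \gamma\, \|g_1 - g_2\|_\infty, \qquad \gamma := \max_{(i,j)} \gamma_{ij} < 1,$$
so Banach's contraction principle furnishes a unique fixed point $f \in \mathcal{C}^*(I \times J)$ satisfying the self-referential equation $f(u_i(x), v_j(y)) = F_{ij}(x, y, f(x,y))$ on $I \times J$ for every $(i,j) \in \Sigma_N \times \Sigma_M$. The remaining interpolation identities $f(x_i, y_j) = z_{ij}$ at the interior mesh points then follow by feeding each of the four corners of $I \times J$ into this functional equation and reading off the values via the map $\tau$, as $(i,j)$ ranges over $\Sigma_N \times \Sigma_M$. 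Finally, the invariance relation $G = \bigcup_{(i,j) \in \Sigma_N \times \Sigma_M} W_{ij}(G)$ for the graph $G = \{(x,y,f(x,y)) : (x,y) \in I \times J\}$ is a direct graph-level restatement of the identity $Tf = f$ through the definition of $W_{ij}$, completing the proof.
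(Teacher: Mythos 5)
Your proposal is correct and follows essentially the same Read--Bajraktarevi\'{c} fixed-point argument on which the paper relies: the theorem itself is only quoted from \cite{Ruan}, and Remark \ref{BFOaddrem1} records exactly the operator $T$ you construct, with the matching conditions serving precisely to glue the piecewise definition along the shared edges. The only cosmetic difference is that you define $\mathcal{C}^*$ by fixing only the four corner values while the paper's $\mathcal{C}^*$ fixes all mesh values $z_{ij}$; both give a $T$-invariant complete metric space, and your recovery of the interior interpolation conditions by pushing the corners of $I \times J$ through the functional equation via $\tau$ is sound.
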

\begin{definition}
We call $G$ in the aforementioned theorem as the FIS and $f$ as the bivariate FIF with respect to the IFS $\big\{K, W_{ij}: (i,j) \in \Sigma_N \times \Sigma_M \big\}$ corresponding to the data set $\{(x_i,y_j,z_{ij}) \in \mathbb{R}^3 : i=0,1,\dots,N; j=0,1,\dots,M\}$.
\end{definition}
\begin{remark} \label{BFOaddrem1}
Consider the set $$\mathcal{C}^*(I \times J, \mathbb{R}):= \Big\{ g\in \mathcal{C}(I \times J, \mathbb{R}): g(x_i,y_j)=z_{ij}~\forall~~ (i,j)\in \Sigma_{N,0} \times \Sigma_{M,0} \Big\}$$
endowed with the supremum metric. Let us define an operator $T$, referred to as Read-Bajraktarevi\'{c} operator
$$Tg(x,y)= F_{ij} \Big(u_i^{-1}(x), v_j^{-1}(y), g\big(u_i^{-1}(x),v_j^{-1}(y) \big)\Big),~ (x,y) \in I_i \times J_j,~ (i,j) \in \Sigma_N \times \Sigma_M.$$
The bivariate FIF $f$ in the previous definition is the unique fixed point of $T$. Consequently, $f$ satisfies the self-referential equation:
$$f(x,y) = F_{ij} \Big(u_i^{-1}(x), v_j^{-1}(y), f\big(u_i^{-1}(x),v_j^{-1}(y) \big)\Big),~ (x,y) \in I_i \times J_j,~ (i,j) \in \Sigma_N \times \Sigma_M.$$
\end{remark}
Under some assumptions on the elements in the IFS involved, the box counting dimension of the graph of the resulting bilinear FIS is studied in \cite{Ruan2}, which we shall recall here. Let $M=N$. Let $g$ be the bilinear function on $I \times J$ satisfying $g(x_i,y_j)=z_{ij}$ for all $(i,j) \in \partial \Sigma_{N,0} \times \partial \Sigma_{N,0}$. That is
\begin{equation*}
\begin{split}
g(x,y)= &~ \frac{1}{(x_N-x_0)(y_N-y_0)} \Big[(x_N-x)(y_N-y) z_{00} + (x-x_0)(y_N-y)z_{N0}\\&~+(x_N-x)(y-y_0)z_{0N}+ (x-x_0)(y-y_0)z_{NN}\Big].
\end{split}
\end{equation*}
Let $h: I \times J \to \mathbb{R}$ be a function satisfying $h(x_i,y_j)=z_{ij}$ for all $(i,j) \in \Sigma_{N,0} \times \Sigma_{M,0}$ such that $h$ when restricted to $D_{ij}$ is bilinear for all $(i,j) \in \Sigma_{N} \times \Sigma_{N}$. Assume that $\{s_{i,j}: (i,j) \in \Sigma_{N,0} \times \Sigma_{N,0}\}$ is a given data set with $|s_{ij}|<1$ for all $i,j$. We define $S: I \times J \to \mathbb{R}$ such that $S(x_i,y_j)=s_{i,j}$ for all $(i,j) \in \Sigma_{N,0} \times \Sigma_{M,0}$ and that $S$ restricted to $D_{ij}$ is bilinear for all $(i,j) \in \Sigma_{N} \times \Sigma_{M}$. Define
$$F_{ij}(x,y,z) = S \big(u_i(x),v_j(y)\big) \big(z-g(x,y)\big)+ h\big(u_i(x),v_j(y)\big).$$

Assume that the scaling factors are steady, that is, for each $(i,j) \in \Sigma_N \times \Sigma_N$ all of $s_{i-1,j-1}$, $s_{i,j-1}$, $s_{i-1,j}$ and $s_{i,j}$ are nonnegative or all of them are nonpositive.  Let
\begin{equation*}
\begin{split}
&~ \sum_{i,j \in \Sigma_N} \Big |S\big(u_i(x_0), v_j(y_0)\big)  \Big| =\sum_{i,j \in \Sigma_N} \Big |S\big(u_i(x_0), v_j(y_N)\big)  \Big| \\
=&~ \sum_{i,j \in \Sigma_N} \Big |S\big(u_i(x_N), v_j(y_0)\big)  \Big|=\sum_{i,j \in \Sigma_N} \Big |S\big(u_i(x_N), v_j(y_N)\big)  \Big|=: \gamma
\end{split}
\end{equation*}
\begin{theorem} \label{BFOkeythm2} \cite{Ruan2}
Let $f$ be the bilinear FIF determined with aforementioned assumptions on the IFS. If $\gamma>N$ and the interpolation points $\big\{(x_i,y_j,z_{i,j})  \big\}$ are not co-bilinear, then $\dim_B \big(\text{Graph}(f)\big) = 1+ \dfrac{\log \gamma}{\log N}$. Otherwise, $\dim_B \big(\text{Graph}(f)\big)=2.$
\end{theorem}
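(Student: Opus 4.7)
The plan is to estimate the minimum number $N_{\delta_k}$ of boxes of side length $\delta_k = N^{-k}$ needed to cover $\mathrm{Graph}(f)$ and match both sides to the claimed exponent. Iterating the Read-Bajraktarevi\'c equation from Remark \ref{BFOaddrem1} exactly $k$ times partitions $I \times J$ into $N^{2k}$ subrectangles $R_{\mathbf{i}}$ indexed by a word $\mathbf{i} = ((i_1,j_1),\dots,(i_k,j_k))$, and above each such cell the graph of $f$ is an $(x,y)$-contraction of the full graph whose vertical part is scaled by $\sigma_{\mathbf{i}} = \prod_{\ell=1}^{k} S\bigl(u_{i_\ell}(\xi_\ell), v_{j_\ell}(\eta_\ell)\bigr)$ for appropriate arguments $(\xi_\ell,\eta_\ell)$, plus an additive bilinear correction coming from $g$ and $h$.

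For the upper bound I would show that the vertical oscillation $V_{\mathbf{i}}$ of $f$ over $R_{\mathbf{i}}$ satisfies $V_{\mathbf{i}} \le C|\sigma_{\mathbf{i}}|\cdot \mathrm{osc}_{I\times J}(f-g) + C'$, so $R_{\mathbf{i}}$ contributes at most $1 + V_{\mathbf{i}}/\delta_k$ boxes. Summing, I would exploit the steady-scaling hypothesis, which forces $|S|$, as a bilinear function of simultaneously nonnegative or nonpositive nodal values on each subpanel, to attain its extrema at a corner, together with the four identities defining $\gamma$, to collapse $\sum_{\mathbf{i}} |\sigma_{\mathbf{i}}|$ into $\gamma^k$. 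This yields $N_{\delta_k} \le C(N^{2k} + \delta_k^{-1}\gamma^k)$ and hence $\overline{\dim}_B\,\mathrm{Graph}(f) \le \max\{2,\, 1+\log\gamma/\log N\}$.

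For the lower bound, the non-co-bilinear hypothesis supplies a subrectangle on which $f-h$ is not identically zero, which via the fixed-point equation forces $\mathrm{osc}_{I\times J}(f-g) > 0$. The steady-sign condition on $\{s_{i-1,j-1},\, s_{i,j-1},\, s_{i-1,j},\, s_{i,j}\}$ is then used to rule out cancellations when vertical oscillations from neighbouring subpanels are aggregated, so the oscillation of $f$ on a single column of $R_{\mathbf{i}}$'s stays comparable to $\sum_{\mathbf{i}} |\sigma_{\mathbf{i}}| \ge c\gamma^k$. A standard counting argument then gives $N_{\delta_k} \ge c\delta_k^{-1}\gamma^k$ when $\gamma > N$, matching the upper bound, while $\dim_B \ge 2$ is automatic when $\gamma \le N$ since the graph is a continuous surface in $\mathbb{R}^3$.

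The principal obstacle is this propagation step in the lower bound: one must show that the bilinear, \emph{non-constant} vertical multiplier $S$ behaves on average like a single scalar $\gamma/N^2$ with no loss across levels of iteration. The steady-sign hypothesis is precisely what permits $|S|$ to be treated as a convex combination of its four corner values with no sign flips, and the four identities defining $\gamma$ make the resulting average multiplier independent of which corner is sampled at each level. A secondary obstacle is controlling the bilinear correction term $h - S\cdot g$ so that it does not swamp the $\gamma^k$ growth; the usual remedy is to decompose $f$ as a bilinear piece plus a genuinely fractal piece and apply the oscillation estimates only to the latter.
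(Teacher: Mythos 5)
This theorem is quoted from Kong, Ruan and Zhang \cite{Ruan2}; the paper under review states it without proof, so there is no in-paper argument to compare yours against. Your outline follows the same box-counting strategy as that reference: cover the graph over the level-$k$ cells, bound the vertical oscillations by products of values of $|S|$, and use the steadiness hypothesis together with the four equal corner sums to collapse the sum of those products to $\gamma^k$ from above and below. The two places where your sketch is thinnest are exactly the technical core of the published proof: first, the additive correction coming from $h - S\cdot g$ accumulates across the $k$ levels of iteration and must be shown to contribute only $O(N^{2k})$ covering boxes --- it is not a single constant $C'$ per cell, and controlling it requires an induction on the level; second, for the lower bound one needs a quantitative non-cancellation lemma asserting that the oscillation of $f$ over a level-$k$ cell is bounded below by a fixed positive multiple of the corner product $\sigma_{\mathbf{i}}$ minus the accumulated correction, which is precisely where non-co-bilinearity and the steady-sign condition enter; your appeal to ``ruling out cancellations'' names the right hypothesis but does not yet supply that estimate. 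Neither point is a wrong turn, but both require the careful bookkeeping carried out in \cite{Ruan2}.
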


\section{Associated fractal linear operator on $\mathcal{C}(I \times J, \mathbb{R})$}\label{BFOsecb}

Let $I=[x_0,x_N]$ and $J=[y_0,y_M].$ Let $f: I\times J \rightarrow \mathbb{R}$ be a given continuous function. Define a net $\Delta$ by \begin{equation*}
                 \begin{aligned}
                   x_0< x_1 < \dots <x_N; \\ y_0<y_1<\dots <y_M.
        \end{aligned}
                    \end{equation*}
Let $L: \mathcal{C}(I \times J, \mathbb{R}) \rightarrow \mathcal{C}(I \times J, \mathbb{R})$ be a bounded linear operator satisfying $Lf \neq f$, $$(Lf)(x_i,y_j)=f(x_i,y_j), ~~~~\forall~ (i,j) \in \partial \Sigma_{N,0} \times \partial \Sigma_{M,0}.$$  Let $\alpha: I\times J \rightarrow \mathbb{R}$ be a continuous function such that $$\|\alpha\|_{\infty}:= \sup\big\{|\alpha(x,y)|:(x,y) \in I \times J \big\} < 1.$$
Set $K= I \times J \times \mathbb{R} $ and define $F_{ij}: K \rightarrow \mathbb{R} $ by
\begin{equation} \label{BFOreeq1}
F_{ij}(x,y,z)= \alpha\big(u_i(x),v_j(y)\big)z+f\big(u_i(x),v_j(y)\big)- \alpha\big(u_i(x),v_j(y)\big)(Lf)(x,y),
\end{equation}
where $u_i \in \mathcal{C}(I,\mathbb{R})$ and $v_j \in \mathcal{C}(J, \mathbb{R})$ satisfy conditions prescribed in the previous section. In the sequel, we define $u_i$ and $v_j$ to be linear functions satisfying the required conditions, say
\begin{equation} \label{BFOreeq2}
u_i(x)=a_i x +b_i, \quad v_j(y)= c_j y + d_j,
\end{equation}
where constants involved are suitably determined.
For each $(i,j) \in \Sigma_N \times \Sigma_M,$ we define $W_{ij}:K \rightarrow I_i \times J_j \times \mathbb{R}$
\begin{equation} \label{BFOreeq3}
W_{ij}(x,y,z)=\big(u_i(x),v_j(y),F_{ij}(x,y,z)\big).
\end{equation}
Let us mention  two examples for such an operator $L: \mathcal{C}(I\times J, \mathbb{R}) \rightarrow \mathcal{C}(I \times J, \mathbb{R}).$
\begin{enumerate}
\item $(Lf)(x,y)=f(x,y) t(x,y),$ where  $t \in \mathcal{C} (I \times J, \mathbb{R})$ is a fixed  non-constant function such that $t(x_i,y_j)=1, ~\forall~(i,j) \in \partial \Sigma_{N,0} \times \partial \Sigma_{M,0}.$ On calculating the operator norm of $L,$ we obtain $\|L\|= \|t\|_{\infty}.$
    \item $(Lf)(x,y)=(f\circ t)(x,y),$ where $t \in  \mathcal{C} (I \times J, I \times J)$ is a fixed map $t \neq Id$, the identity map and $t(x_i,y_j)=(x_i,y_j),~\forall~(i,j) \in \partial\Sigma_{N,0} \times \partial \Sigma_{M,0}.$ In this case, we get $\|L\|=1.$
\end{enumerate}
It is straightforward to see that $F_{ij}$ satisfies the  matching conditions required in Theorem \ref{BFOTHM1} and therefore we have the following.
\begin{theorem}
Let $\big\{K, W_{ij}: (i,j) \in\Sigma_N \times \Sigma_M \big\}$ be the IFS defined through (\ref{BFOreeq1})-(\ref{BFOreeq3}) above.
Then there exists a unique continuous function $f^{\alpha}_{\Delta,L}:I \times J \rightarrow \mathbb{R}  $ such that $f^{\alpha}_{\Delta,L}(x_i,y_j)=f(x_i,y_j)$ for all $(i,j) \in \Sigma_{N,0} \times \Sigma_{M,0}$ and $G= \cup_{(i,j) \in \Sigma_{N} \times \Sigma_{M}} W_{ij}(G),$ where $G=\big\{(x,y,f^{\alpha}_{\Delta,L}(x,y)):(x,y) \in I \times J \big\}$ is the graph of $f^{\alpha}_{\Delta,L}.$
\end{theorem}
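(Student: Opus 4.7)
The plan is to deduce this theorem directly as an application of Theorem \ref{BFOTHM1}, using as interpolation data $z_{ij} := f(x_i,y_j)$ for $(i,j)\in \Sigma_{N,0}\times \Sigma_{M,0}$. The maps $u_i, v_j$ defined in (\ref{BFOreeq2}) are chosen precisely to meet the hypotheses of that theorem, so all that remains is to verify that the $F_{ij}$ defined by (\ref{BFOreeq1}) satisfies (i) continuity, (ii) the four-corner interpolation identity, (iii) a uniform Lipschitz bound in the $z$-variable with constant less than one, and (iv) the two matching conditions along interior grid lines.

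First I would handle (i)--(iii) together. Continuity of $F_{ij}$ is immediate from continuity of $\alpha$, $f$, $Lf$, $u_i$ and $v_j$. For the corner conditions, fix $(k,l)\in \partial\Sigma_{N,0}\times \partial\Sigma_{M,0}$; then by hypothesis $(Lf)(x_k,y_l)=f(x_k,y_l)=z_{kl}$, so the two $\alpha\bigl(u_i(x_k),v_j(y_l)\bigr)$-terms in (\ref{BFOreeq1}) cancel against each other and one is left with
\[
F_{ij}(x_k,y_l,z_{kl})=f\bigl(u_i(x_k),v_j(y_l)\bigr)=f(x_{\tau(i,k)},y_{\tau(j,l)})=z_{\tau(i,k),\tau(j,l)},
\]
as required. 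Since only the first summand in (\ref{BFOreeq1}) depends on $z$, we also obtain
\[
|F_{ij}(x,y,z')-F_{ij}(x,y,z'')|\le \|\alpha\|_\infty\,|z'-z''|,
\]
and by assumption $\gamma_{ij}:=\|\alpha\|_\infty<1$, uniformly in $(i,j)$.

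Next I would verify the matching conditions (iv). For $i\in\operatorname{int}\Sigma_{N,0}$ and $x^*=u_i^{-1}(x_i)=u_{i+1}^{-1}(x_i)$, one has $u_i(x^*)=x_i=u_{i+1}(x^*)$, hence $\alpha\bigl(u_i(x^*),v_j(y)\bigr)=\alpha\bigl(u_{i+1}(x^*),v_j(y)\bigr)$ and $f\bigl(u_i(x^*),v_j(y)\bigr)=f\bigl(u_{i+1}(x^*),v_j(y)\bigr)$; since the remaining term $(Lf)(x^*,y)$ does not depend on $i$, we conclude $F_{ij}(x^*,y,z)=F_{i+1,j}(x^*,y,z)$ for every $y\in J$ and $z\in\mathbb{R}$. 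The analogous computation in the $y$-direction gives the second matching condition. Once all hypotheses of Theorem \ref{BFOTHM1} are confirmed, that theorem delivers a unique continuous function on $I\times J$ whose graph is the invariant set of the IFS and which interpolates the data; denote it $f^{\alpha}_{\Delta,L}$. The interpolation equality $f^{\alpha}_{\Delta,L}(x_i,y_j)=f(x_i,y_j)$ is automatic because $z_{ij}=f(x_i,y_j)$ was our choice of data.

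There is no serious obstacle in this argument; the whole proof is a verification of hypotheses. The only nontrivial design point is the appearance of the term $-\alpha\bigl(u_i(x),v_j(y)\bigr)(Lf)(x,y)$ in the definition of $F_{ij}$: this term is engineered precisely so that the $\alpha$-dependence cancels at the four corners, which in turn requires the standing assumption that $L$ fixes the values of $f$ at those corners. If I were to isolate one place where something could go wrong, it would be at that cancellation, so I would be careful to display it explicitly in the write-up before invoking Theorem \ref{BFOTHM1}.
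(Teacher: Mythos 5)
Your proposal is correct and follows exactly the route the paper takes: the paper simply states that ``it is straightforward to see that $F_{ij}$ satisfies the matching conditions required in Theorem \ref{BFOTHM1}'' and then invokes that theorem with the data $z_{ij}=f(x_i,y_j)$. You have merely written out the verification (corner cancellation via $(Lf)(x_k,y_l)=f(x_k,y_l)$, the Lipschitz bound $\|\alpha\|_\infty<1$ in the $z$-variable, and the matching conditions from $u_i(x^*)=x_i=u_{i+1}(x^*)$) that the paper leaves implicit, and all of these checks are accurate.
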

\begin{remark}
Being the fixed point of the RB-operator (Cf. Remark \ref{BFOaddrem1}), $f^{\alpha}_{\triangle,L}$ satisfies the self-referential equation:
 $$ f^{\alpha}_{\Delta,L}(x,y)=F_{ij}\Big(u_i^{-1}(x),v_j^{-1}(y),f^{\alpha}_{\Delta,L}(u_i^{-1}(x),v_j^{-1}(y))\Big),~~\forall~~ (x,y) \in I_i \times J_j.$$
 That is, for all $(x,y) \in I_i \times J_j,$ where $(i,j)  \in\Sigma_N \times \Sigma_M$ we have
 \begin{equation} \label{fnleq}
  f^{\alpha}_{\Delta,L}(x,y)= f(x,y)+\alpha(x,y) f^{\alpha}_{\Delta,L}(u_i^{-1}(x),v_j^{-1}(y))- \alpha(x,y)(Lf)(u_i^{-1}(x),v_j^{-1}(y)).
 \end{equation}
 \end{remark}
 The function $f^{\alpha}_{\Delta,L}$ appeared in the previous remark is important enough to be dignified with a name of its own.
 \begin{definition}
 We call the aforementioned self-referential function $f^{\alpha}_{\Delta,L}$ as (bivariate) $\alpha$-fractal function, fractal perturbation or associate fractal function corresponding to $f$ with respect to the operator $L$ and the net $\Delta$.
 \end{definition}
 \begin{remark} \label{BFOrerem1}
  Theorem \ref{BFOkeythm2} establishes the box dimension of the graph of $f^{\alpha}_{\Delta,L}$ for some special class of functions $f$ and choice of parameters $\alpha$, $\Delta$ and $L$. Recently, the box counting dimension of the graph of
 a univariate $\alpha$-fractal function established in a more general setting in \cite{AGN}. We believe that by modifying and adapting these results, the box dimension of the graph of $f^{\alpha}_{\Delta,L}$ can be computed for a more general class and details will appear elsewhere.
 \end{remark}
\begin{definition}
  For a fixed net $\Delta$, a scale function $\alpha$ and an operator $L$, let us  define  the $\alpha$-fractal operator or simply fractal operator $$\mathcal{F}^{\alpha}_{\triangle,L}: \mathcal{C}(I \times J, \mathbb{R}) \rightarrow \mathcal{C}(I \times J,\mathbb{R}), \quad  \mathcal{F}^{\alpha}_{\triangle,L}(f)= f^{\alpha}_{\triangle,L}.$$
 \end{definition}
Next let us recall a pair of lemmas and  definitions that are fundamental in functional analysis; see, for instance, \cite{BB}.
 \begin{lemma}\label{BFOel1}
 If $T$ is a bounded linear operator from Banach space into itself such that $\|T\| < 1,$ then $ (Id - T)^{-1}$ exists and it is bounded.
 \end{lemma}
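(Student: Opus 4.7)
The plan is to prove this by constructing the inverse explicitly via the Neumann series, which is the standard route for this classical result in functional analysis. Let $X$ denote the Banach space on which $T$ acts, and let $B(X)$ denote the space of bounded linear operators on $X$ equipped with the operator norm. Since $X$ is complete, so is $B(X)$, and this completeness is what drives the argument.

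First I would form the partial sums $S_N = \sum_{n=0}^{N} T^n$ with the convention $T^0 = Id$, and verify that $(S_N)$ is Cauchy in $B(X)$. The key estimate is the submultiplicativity of the operator norm, giving $\|T^n\| \le \|T\|^n$, so that for $N > M$,
\[
\|S_N - S_M\| \le \sum_{n=M+1}^{N} \|T\|^n,
\]
and the hypothesis $\|T\| < 1$ makes the right-hand side the tail of a convergent geometric series. By completeness of $B(X)$, there exists $S \in B(X)$ with $S_N \to S$ in operator norm, and letting $N \to \infty$ in $\|S_N\| \le \sum_{n=0}^{N} \|T\|^n$ yields the bound $\|S\| \le (1-\|T\|)^{-1}$, which establishes boundedness.

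Second, I would identify $S$ as a two-sided inverse of $Id - T$. A direct telescoping computation gives $(Id - T) S_N = S_N (Id - T) = Id - T^{N+1}$. Since $\|T^{N+1}\| \le \|T\|^{N+1} \to 0$, the operator $T^{N+1}$ tends to the zero operator; combined with the continuity of left and right multiplication by the fixed bounded operator $Id - T$, passing to the limit on both sides yields $(Id - T) S = S(Id - T) = Id$. Thus $(Id - T)^{-1}$ exists and equals $S$, which is bounded.

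There is no substantive obstacle here: every step is routine provided one carefully invokes completeness of $B(X)$ (which needs $X$ Banach) and the continuity of operator composition in the operator norm. The only mild subtlety is to be explicit that the limit $S$ is taken in the operator norm topology rather than pointwise, so that the algebraic identity $(Id - T) S_N = Id - T^{N+1}$ transfers cleanly to the limit. The resulting inverse will in later sections be used to identify $\mathcal{F}^{\alpha}_{\Delta,L}$ with a perturbation of the identity by a small operator, which is precisely the context in which this lemma is invoked.
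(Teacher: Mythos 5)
Your Neumann series argument is correct and complete; this is the standard proof of the classical result. The paper itself does not prove this lemma — it simply recalls it as a fundamental fact and cites Bollob\'as — and your argument is precisely the one found in such references, so there is nothing to reconcile.
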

 \begin{lemma}\label{BFOel2}
  If $T$ is a bounded linear operator and $S$ is a compact operator on a normed linear space $X,$ then $TS$ and $ST$ are compact operators.
  \end{lemma}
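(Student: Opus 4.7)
The plan is to verify compactness of $TS$ and $ST$ directly from the sequential characterization of compact operators: a linear operator $A$ on a normed linear space $X$ is compact precisely when, for every bounded sequence $(x_n) \subset X$, the image sequence $(Ax_n)$ admits a norm-convergent subsequence. I would treat the two products in turn; in each case the argument reduces to a single subsequence extraction combined with the fact that a bounded linear map is (Lipschitz) continuous and preserves bounded sets.

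For $TS$, I would fix an arbitrary bounded sequence $(x_n) \subset X$. Since $S$ is compact, $(Sx_n)$ has a subsequence $(Sx_{n_k})$ converging in norm to some $y \in X$. Because $T$ is bounded, hence continuous, $T(Sx_{n_k}) \to Ty$, producing a convergent subsequence of $(TSx_n)$ and so certifying the compactness of $TS$.

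For $ST$, I would again fix a bounded sequence $(x_n)$. Boundedness of $T$ gives $\|Tx_n\| \le \|T\|\,\|x_n\|$, so $(Tx_n)$ is itself a bounded sequence in $X$. Applying compactness of $S$ to this bounded sequence yields a subsequence $(S(Tx_{n_k}))$ that converges in norm, which is exactly the condition needed for compactness of $ST$.

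No genuine obstacle arises; the argument is simply a clean composition of two standard facts. The only point worth emphasizing is the asymmetric role played by the hypothesis on $T$: its boundedness is invoked for \emph{continuity} when it acts on the outside (as in $TS$), and for \emph{preservation of boundedness} when it acts on the inside (as in $ST$). This symmetry between the two required ingredients is precisely what makes the lemma go through for both orders of composition.
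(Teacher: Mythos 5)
Your argument is correct: the sequential characterization of compactness, combined with continuity of $T$ for the product $TS$ and with preservation of boundedness for the product $ST$, is exactly the standard proof of this fact. Note that the paper itself offers no proof of Lemma \ref{BFOel2}; it is merely recalled as a standard result with a citation to \cite{BB}, so your write-up supplies the expected textbook argument and nothing needs to be changed.
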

  Following \cite{BB}, we shall use the product notation for the value of a linear functional on an element: $\langle x,f \rangle = \langle f,x \rangle = f(x)$ for $x$ in a normed linear space $X$ and $f$ in $X^*$, the dual of $X$. Let $X,Y$ be normed spaces and  $T: X \to Y$ be a bounded linear operator. The adjoint or dual $T^*$ of $T$ is the unique map $T^* : Y^* \to X^*$ such that
  $$ \langle x, T^*g \rangle = \langle Tx, g \rangle, ~~ \forall~ x \in X, ~~g \in Y^*.$$
 \begin{definition}
    An operator $T$ is Fredholm if:
    \begin{enumerate}
    \item  Range$(T)$ is closed.
    \item  $\ker(T)$ and $\ker(T^*)$ are finite-dimensional.
    \end{enumerate}
     Further, the index of a Fredholm operator is defined as $$ \text{index} (T)= \dim(\ker(T))- \dim (\ker (T^*)).$$
    \end{definition}
    \begin{definition}
    Given a Banach space $X$, the annihilator of a subspace $L$ of $X^*$ in $X$ (or the preannihilator of $L$) is
  $$^a L=\big\{ x \in X: \langle x, f \rangle =0 ~~\forall~~f \in L\big\}.$$
    \end{definition}
The following theorem exhibits some elementary properties of the bivariate $\alpha$-fractal function and the corresponding fractal operator.
 This result is reminiscent of the univariate case scattered in the fractal literature; see, for instance, \cite{M1}.
 However, for the sake of completeness and record, we provide a fairly self-contained arguments.
 \begin{theorem}\label{BFOTHM2}
 Let $ \| \alpha \|_{\infty} = \sup \big\{|\alpha(x,y)|: (x,y) \in I \times J \big\} $, and let $Id$ be the identity operator on $ \mathcal{C}(I \times J, \mathbb{R})$.
 \begin{enumerate}
 \item For any $f \in \mathcal{C}(I \times J, \mathbb{R})$, the perturbation error satisfies $$ \| f_{\Delta,L}^{\alpha} - f \|_{\infty} \leq \frac{\| \alpha \|_{\infty}}{1- \|\alpha\|_{\infty}} \|f-Lf\|_{\infty}.$$ In particular, if $\alpha=0$, then $\mathcal{F}_{\Delta, L}^{\alpha}=Id.$ \\
 \item The fractal operator $\mathcal{F}_{\Delta, L}^{\alpha}$ is a bounded linear operator with respect to the uniform norm on $\mathcal{C}(I \times J, \mathbb{R})$. Furthermore, the operator norm satisfies $$ \| \mathcal{F}_{\Delta, L}^{\alpha} \| \leq 1+ \frac{\|\alpha\|_{\infty}~~ \| Id - L \| }{1-\| \alpha \|_{\infty}}.$$
 \item  For $\| \alpha \|_{\infty} < \|L\|^{-1}$, $\mathcal{F}_{\Delta, L}^{\alpha}$ is bounded below. In particular, $\mathcal{F}_{\Delta, L}^{\alpha}$ is one to one.
 \item If $\| \alpha \|_{\infty} < (1 + \|Id - L\|)^{-1}$, then $\mathcal{F}_{\Delta,L}^{\alpha}$ has a bounded inverse and consequently a topological automorphism (i.e., a bijective bounded linear map with a bounded inverse from $\mathcal{C}(I \times J, \mathbb{R})$ to itself). Moreover, $$\| (\mathcal{F}_{\Delta,L}^{\alpha})^{-1}\| \leq \frac{1+\| \alpha\|_{\infty} }{1-\| \alpha\|_{\infty} \|L\|} .$$
 \item  If $\| \alpha\|_{\infty} \ne 0,$ then the fixed points of $L$ are  the fixed points of the fractal operator $\mathcal{F}_{\Delta, L}^{\alpha}$ as well.
 \item If $1$ belongs to the point spectrum of $L$, then $ 1 \le \|\mathcal{F}_{\Delta,L}^{\alpha}\|.$
 \item For $\| \alpha\|_{\infty} < \|L\|^{-1}$, the fractal operator  $\mathcal{F}_{\Delta,L}^{\alpha}$ is not a compact operator.
 \item If $\| \alpha\|_{\infty} < (1 + \|Id - L\|)^{-1},$ then $ \mathcal{F}_{\Delta,L}^{\alpha}$ is Fredholm and its index is $0.$
\end{enumerate}
 \end{theorem}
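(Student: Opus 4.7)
The plan is to derive all eight assertions from the self-referential identity (\ref{fnleq}) satisfied by $f^{\alpha}_{\Delta,L}$. For (1), on each rectangle $I_i\times J_j$ I would rewrite (\ref{fnleq}) as
$$f^{\alpha}_{\Delta,L}(x,y) - f(x,y) = \alpha(x,y)\bigl[f^{\alpha}_{\Delta,L}\bigl(u_i^{-1}(x), v_j^{-1}(y)\bigr) - (Lf)\bigl(u_i^{-1}(x), v_j^{-1}(y)\bigr)\bigr],$$
add and subtract $f(u_i^{-1}x, v_j^{-1}y)$ inside the bracket, take uniform norms globally, and solve the resulting inequality for $\|f^{\alpha}_{\Delta,L} - f\|_\infty$; the $\alpha = 0$ case drops out immediately. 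For (2), linearity is inherited from the fact that the Read-Bajraktarevi\'{c} operator in Remark \ref{BFOaddrem1} depends linearly on $f$ and has a unique fixed point, while the stated operator-norm bound follows by combining (1) with $\|f^{\alpha}_{\Delta,L}\|_\infty \leq \|f\|_\infty + \|f^{\alpha}_{\Delta,L} - f\|_\infty$ and $\|f - Lf\|_\infty \leq \|Id - L\|\,\|f\|_\infty$.

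Parts (3) and (4) use the same identity more sharply. For (3), rearranging the pointwise estimate to isolate $|f(x,y)|$ and taking suprema yields
$$\|f^{\alpha}_{\Delta,L}\|_\infty \;\geq\; \frac{1 - \|\alpha\|_\infty\,\|L\|}{1 + \|\alpha\|_\infty}\,\|f\|_\infty,$$
which is strictly positive exactly when $\|\alpha\|_\infty < \|L\|^{-1}$, forcing a bounded-below estimate and in particular injectivity. For (4), the perturbation bound from (1) translates into $\|Id - \mathcal{F}^{\alpha}_{\Delta,L}\| \leq \frac{\|\alpha\|_\infty \|Id-L\|}{1 - \|\alpha\|_\infty}$, which is strictly less than one under the stated hypothesis; applying Lemma \ref{BFOel1} to $Id - (Id - \mathcal{F}^{\alpha}_{\Delta,L})$ delivers a bounded inverse. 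Since $1 + \|Id - L\| \geq \|L\|$, the hypothesis of (4) implies that of (3), so the inverse-norm bound $\frac{1+\|\alpha\|_\infty}{1 - \|\alpha\|_\infty \|L\|}$ is read off directly from the lower bound in (3).

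For (5), substituting $Lf = f$ into (\ref{fnleq}) shows that $f$ itself satisfies the fixed-point equation of the RB-operator associated with $f$, so uniqueness forces $\mathcal{F}^{\alpha}_{\Delta,L}(f) = f$; the $\|\alpha\|_\infty \neq 0$ hypothesis merely excludes the trivial identity case. Statement (6) is the one-line consequence that any eigenvector of $L$ at eigenvalue $1$ is a fixed vector of $\mathcal{F}^{\alpha}_{\Delta,L}$ of unit norm ratio. Part (7) is the standard observation that on the infinite-dimensional space $\mathcal{C}(I\times J, \mathbb{R})$ no bounded-below operator can be compact: from a bounded non-Cauchy sequence, compactness would extract a Cauchy image, which the lower bound of (3) would pull back to a Cauchy subsequence, a contradiction. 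Finally, (8) falls straight out of (4): a topological automorphism has trivial kernel, whole-space (hence closed) range, and bijective adjoint, so it is Fredholm of index zero.

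The only delicate point I anticipate is handling the piecewise nature of (\ref{fnleq}) while taking global uniform norms; since $\alpha$, $u_i^{-1}$ and $v_j^{-1}$ depend on the rectangle $I_i \times J_j$, one must verify that every estimate uses only the global quantity $\|\alpha\|_\infty$ and global suprema of functions in $\mathcal{C}(I\times J, \mathbb{R})$, independently of $(i,j)$. Beyond that bookkeeping, the argument is linear and relies only on Lemma \ref{BFOel1}, uniqueness of the RB fixed point, and elementary Banach-space facts.
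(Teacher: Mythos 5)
Your proposal is correct and follows essentially the same route as the paper: every item is derived from the self-referential equation (\ref{fnleq}) by taking uniform norms, with Lemma \ref{BFOel1} giving invertibility in (4), the lower bound from (3) giving non-compactness in (7), and invertibility giving the Fredholm index in (8). The only (harmless) deviations are in (5), where you verify directly that a fixed point of $L$ is the fixed point of the relevant Read--Bajraktarevi\'{c} operator while the paper instead squeezes $\|f^{\alpha}_{\Delta,L}-f\|_{\infty}\le\|\alpha\|_{\infty}\|f^{\alpha}_{\Delta,L}-f\|_{\infty}$, and in (7), where you use the standard sequence argument against compactness of a bounded-below operator while the paper composes with the inverse on the range and invokes Lemma \ref{BFOel2}; your observation that $\|L\|\le 1+\|Id-L\|$ links the hypotheses of (4) and (3) exactly as the paper implicitly requires.
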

 \begin{proof}
 \begin{enumerate}
\item For $(x,y) \in I_i \times J_j,$ from (\ref{fnleq}) we have
  $$ f_{\Delta,L}^{\alpha}(x,y) - f(x,y)= \alpha(x,y) \Big[f^{\alpha}\big(u^{-1}_i(x),v^{-1}_j(y)\big)- (Lf)\big(u^{-1}_i(x),v^{-1}_j(y)\big)\Big].$$
Therefore
 $$ |f_{\Delta,L}^{\alpha}(x,y) - f(x,y)|\le \|\alpha\|_{\infty}~~\|f_{\Delta,L}^{\alpha} -Lf\|_{\infty}.$$
 Since the above inequality is true  for all  $(x,y) \in I_i \times J_j$ where $(i,j)\in   \Sigma_{N} \times \Sigma_{M}$, we conclude that
 \begin{equation}\label{BFOeqw}
    \begin{aligned}
       \|f_{\Delta,L}^{\alpha}- f\|_{\infty}\le \|\alpha\|_{\infty}~~\|f_{\Delta,L}^{\alpha} -Lf\|_{\infty}.
    \end{aligned}
    \end{equation}
Using the triangle inequality, we get
 $$ \|f_{\Delta,L}^{\alpha}- f\|_{\infty}\le \|\alpha\|_{\infty}~\Big[\|f_{\Delta,L}^{\alpha} -f\|_{\infty}+\|f -Lf\|_{\infty}\Big].$$
  This demonstrates
  \begin{equation*}
     \begin{aligned}
         \|f_{\Delta,L}^{\alpha}- f\|_{\infty}\le \frac{\|\alpha\|_{\infty} ~~\|Id-L\|}{1-\|\alpha\|_{\infty}}~~\|f\|_{\infty}.
    \end{aligned}
     \end{equation*}
For $\|\alpha\|_{\infty} =0,$ the previous inequality produces $ \|f_{\Delta,L}^{\alpha}- f\|_{\infty} = 0, $ and hence  $f_{\Delta,L}^{\alpha} = f$ for all $f \in \mathcal{C}(I \times J, \mathbb{R})$.  That is, $\mathcal{F}_{\Delta,L}^{\alpha}=Id.$
\item Let $f, g \in \mathcal{C}(I \times J, \mathbb{R})$ and $ \beta , \gamma \in \mathbb{R}.$ For $(x,y) \in I_i \times J_j,$ we have
 $$ \beta f_{\Delta,L}^{\alpha}(x,y) = \beta f(x,y) +  \beta  \alpha(x,y) \Big[f_{\Delta,L}^{\alpha}\big(u^{-1}_i(x),v^{-1}_j(y)\big)- (Lf)\big(u^{-1}_i(x),v^{-1}_j(y)\big)\Big],$$
 $$ \gamma g_{\Delta,L}^{\alpha}(x,y) = \gamma g(x,y) + \gamma  \alpha(x,y) \Big[g_{\Delta,L}^{\alpha}\big(u^{-1}_i(x),v^{-1}_j(y)\big)- (Lg)\big(u^{-1}_i(x),v^{-1}_j(y)\big)\Big].$$
 Adding the above two equations, one gets
 \begin{equation*}
 \begin{split}
  \big(\beta f_{\Delta,L}^{\alpha}+\gamma g_{\Delta,L}^{\alpha}\big)(x,y) = &~ \big(\beta f +\gamma g\big)(x,y) +    \alpha(x,y).\\&~ \big(\beta f_{\Delta,L}^{\alpha}+\gamma g_{\Delta,L}^{\alpha} - L(\beta f+\gamma g)\big)\big(u^{-1}_i(x),v^{-1}_j(y)\big).
  \end{split}
  \end{equation*}
  The previous equation reveals that $\beta f_{\Delta,L}^{\alpha}+\gamma g_{\Delta,L}^{\alpha}$ is the fixed point of RB-operator
  \begin{equation*}
 \begin{split}
  (Th) (x,y) =&~ \alpha(x,y) h\big( u^{-1}_i(x),v^{-1}_j(y) \big)+ (\beta f+ \gamma g)(x,y)\\&~ -\alpha(x,y)L(\beta f+ \gamma g)\big( u^{-1}_i(x),v^{-1}_j(y) \big),
  \end{split}
  \end{equation*}
 Since the fixed point of the RB operator is unique, we have $(\beta f+\gamma g)_{\Delta,L}^{\alpha}= \beta f_{\Delta,L}^{\alpha}+\gamma g_{\Delta,L}^{\alpha},$ which reveals the linearity of the operator $\mathcal{F}_{\Delta,L}^{\alpha}.$
From the previous item we write $$ \|f_{\Delta,L}^{\alpha}\|_{\infty}- \|f\|_{\infty}\le \frac{\|\alpha\|_{\infty} ~~\|Id-L\|}{1-\|\alpha\|_{\infty}}~~\|f\|_{\infty}.$$ It implies that $$ \|\mathcal{F}_{\Delta,L}^{\alpha}(f)\|_{\infty} \le \|f\|_{\infty}+ \frac{\|\alpha\|_{\infty} ~~\|Id-L\|}{1-\|\alpha\|_{\infty}}~~\|f\|_{\infty}.$$
 Therefore $\mathcal{F}_{\Delta,L}^{\alpha}$ is a bounded linear operator.
 \item From (\ref{BFOeqw})
 \begin{equation*}
 \begin{split}
 \|f\|_{\infty}-\|f^{\alpha}_{\Delta,L}\|_{\infty}   \le \|f_{\Delta,L}^{\alpha}- f\|_{\infty}\le &~ \|\alpha\|_{\infty}~~\|f_{\Delta,L}^{\alpha} -Lf\|_{\infty}\\ \le &~ \| \alpha\|_{\infty}~~(\|f_{\Delta,L}^{\alpha}\|_{\infty} +\|L\|~~\|f\|_{\infty}) .
 \end{split}
 \end{equation*}
 Hence we get $(1-\| \alpha \|_{\infty}\|L\|)~~\|f\|_{\infty} \le (1+\| \alpha\|_{\infty})~~\|f_{\Delta,L}^{\alpha}\|_{\infty} .$ If  $\| \alpha \|_{\infty} < \|L\|^{-1} $, then
 \begin{equation}\label{BFOeqz}
       \begin{aligned}
          \|f\|_{\infty} \le \frac{1+\| \alpha\|_{\infty}}{1- \| \alpha\|_{\infty} \|L\|}\|f_{\Delta,L}^{\alpha}\|_{\infty}.
       \end{aligned}
       \end{equation}
   Thus $\mathcal{F}_{\Delta,L}^{\alpha}$ is bounded below.
   \item  From the hypothesis $ \|Id -\mathcal{F}_{\Delta,L}^{\alpha}\| \le \frac{\|\alpha\|_{\infty} ~\|Id-L\|}{1-\|\alpha\|_{\infty}} < 1.$
    Consequently, Lemma \ref{BFOel1} dictates that $\mathcal{F}_{\Delta,L}^{\alpha}$ has a bounded inverse. From (\ref{BFOeqz}), we infer that $$\|(\mathcal{F}_{\Delta,L}^{\alpha})^{-1}(f)\|_{\infty} \le \frac{1+\| \alpha\|_{\infty}}{1- \| \alpha\|_{\infty} \|L\|}\|f\|_{\infty},$$ which in turn  yields the required bound for the operator norm of $ (\mathcal{F}_{\Delta,L}^{\alpha})^{-1}.$
    \item Let $ \| \alpha\|_{\infty} \ne 0,$ and $f$ be a fixed point of $L.$ From (\ref{BFOeqw})   $$ \|f_{\Delta,L}^{\alpha}- f\|_{\infty}\le \|\alpha\|_{\infty}~~\|f_{\Delta,L}^{\alpha} -f\|_{\infty}.$$ Since $\|\alpha\|_{\infty} <1$, this  implies that $ f_{\Delta,L}^{\alpha}=f.$
       \item  Choose $g \in \mathcal{C}(I \times J, \mathbb{R})$ such that $Lg=g$ and $\|g\|=1$. By the previous part of the theorem  $\mathcal{F}_{\Delta,L}^{\alpha}(g)=g$, and hence $ \|\mathcal{F}_{\Delta,L}^{\alpha}(g)\|_{\infty} = \|g\|_{\infty}.$ The definition of the operator norm now yields $ 1 \le \|\mathcal{F}_{\Delta,L}^{\alpha}\|_{\infty} .$
           \item For $\| \alpha \|_{\infty} < \|L\|^{-1},$  we know that  $\mathcal{F}_{\Delta,L}^{\alpha}:\mathcal{C}(I \times J, \mathbb{R}) \rightarrow \mathcal{C}(I \times J, \mathbb{R})$ is one-one. Note that the range space of $\mathcal{F}_{\Delta,L}^{\alpha}$ is  infinite dimensional. We define the inverse map $\big(\mathcal{F}_{\Delta,L}^{\alpha}\big)^{-1}  : \mathcal{F}_{\Delta,L}^{\alpha}\big(\mathcal{C}(I \times J, \mathbb{R})\big) \rightarrow \mathcal{C}(I \times J, \mathbb{R}).$ With this choice of $\alpha$, $\mathcal{F}_{\Delta,L}^{\alpha}$ is bounded below, and hence it follows that $\big(\mathcal{F}_{\Delta,L}^{\alpha}\big)^{-1}$ is a bounded linear operator. Assume that $\mathcal{F}_{\Delta,L}^{\alpha}$ is a compact operator. Then by  Lemma \ref{BFOel2}, we deduce that the operator $T= \mathcal{F}_{\Delta,L}^{\alpha}\big(\mathcal{F}_{\Delta,L}^{\alpha}\big)^{-1}: \mathcal{F}_{\Delta,L}^{\alpha}\big(\mathcal{C}(I \times J, \mathbb{R})\big) \rightarrow \mathcal{C}(I \times J, \mathbb{R})$ is a compact operator, which is a contradiction to the infinite dimensionality of the space $\mathcal{F}_{\Delta,L}^{\alpha}\big(\mathcal{C}(I \times J,\mathbb{R})\big).$ Therefore, $\mathcal{F}_{\Delta,L}^{\alpha}$ is not a compact operator.
               \item  Under the hypothesis, range space of $\mathcal{F}_{\Delta,L}^{\alpha}$ is closed. Furthermore, $\mathcal{F}_{\Delta,L}^{\alpha}$ is invertible. Recall that if $T: X \to Y$ is invertible, then $T^*$ is also invertible \cite{BB}.  Therefore $ (\mathcal{F}_{\Delta,L}^{\alpha})^*$ is invertible. As a consequence, $\mathcal{F}_{\Delta,L}^{\alpha}$ is Fredholm. The index of a Fredholm operator is defined as $$ \text{index} \big(\mathcal{F}_{\Delta,L}^{\alpha}\big)= \dim \Big(\ker(\mathcal{F}_{\Delta,L}^{\alpha})\Big)- \dim\Big(\ker \big(\mathcal{F}_{\Delta,L}^{\alpha}\big)^*\Big).$$ Hence, the index is zero.\end{enumerate}
 \end{proof}
 \begin{remark}
 If $\alpha(x,y)$ is a non-zero constant function in $I \times J $ and $f$ is a fixed point of $\mathcal{F}_{\Delta,L}^{\alpha}$, then $f$ is a fixed point of $L$ as well. This can be easily seen as follows. Here $\alpha(x,y)= \alpha \ne 0 ,$ a constant function on $I \times J.$ Now let $f$ be a fixed point of $\mathcal{F}_{\Delta,L}^{\alpha},$ that is, $f_{\Delta,L}^{\alpha}=f.$ For $(x,y) \in I \times J$, by the functional equation we have  $$ f\big(u_i(x),v_j(y)\big) =f\big(u_i(x),v_j(y)\big)+ \alpha ~~ \big[f(x,y)- Lf(x,y)\big],$$  from which it follows that $Lf=f.$
 \end{remark}
 \begin{theorem}
 Let $f \in \mathcal{C}(I \times J, \mathbb{R})$.
 \begin{enumerate}

\item If  $\alpha_n \in \mathcal{C}(I \times J, \mathbb{R})$ be such that $\|\alpha_n\|_\infty <1$ and $\alpha_n \to 0$ as $n \to \infty$. Then the corresponding sequence of $\alpha$-fractal functions $f^{\alpha_n}_{\Delta, L} \to f$ as $n \to \infty$.

 \item If $L_n : \mathcal{C}(I \times J, \mathbb{R}) \to \mathcal{C}(I \times J, \mathbb{R})$ be a sequence of bounded linear operators such that $L_nf \neq f$, $(L_nf)(x_i,y_j)= f(x_i, y_j)$ for $(x_i,y_j) \in \partial \Sigma_{N,0} \times \partial \Sigma_{M,0}$ with respect to a net $\Delta$,
     and $L_nf \to f$. Then the corresponding sequence of $\alpha$-fractal functions $f_{\Delta, L_n}^\alpha \to f$ as $n \to \infty$ for any fixed admissible choice of the scale function $\alpha$.
 \end{enumerate}
 \end{theorem}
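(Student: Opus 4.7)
The plan is to observe that this entire theorem is essentially a direct corollary of the perturbation estimate already proved as item~(1) of the previous theorem, namely
\begin{equation*}
\|f^{\alpha}_{\Delta,L} - f\|_{\infty} \le \frac{\|\alpha\|_{\infty}}{1-\|\alpha\|_{\infty}}\,\|f-Lf\|_{\infty}.
\end{equation*}
Both assertions follow by reading off which side of this inequality is being varied. The role of the main argument, therefore, is simply to substitute the correct quantities and verify admissibility of the hypotheses at each stage.

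For part~(1), I would fix $f$ and $L$ and substitute $\alpha$ by $\alpha_n$ in the estimate to obtain
\begin{equation*}
\|f^{\alpha_n}_{\Delta,L} - f\|_{\infty} \le \frac{\|\alpha_n\|_{\infty}}{1-\|\alpha_n\|_{\infty}}\,\|f-Lf\|_{\infty}.
\end{equation*}
The factor $\|f-Lf\|_{\infty}$ is a constant independent of $n$, while the scalar prefactor tends to $0$ because $\|\alpha_n\|_{\infty} \to 0$ (and stays strictly below $1$, which is the admissibility condition needed to apply the estimate in the first place). The convergence $f^{\alpha_n}_{\Delta,L} \to f$ in the uniform norm is then immediate.

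For part~(2), I would instead hold $\alpha$ fixed (admissible, so $\|\alpha\|_{\infty}<1$) and substitute $L = L_n$. Each $L_n$ satisfies the hypotheses needed for $f^{\alpha}_{\Delta,L_n}$ to be well defined (boundedness, the corner interpolation conditions, and $L_n f \neq f$), so the estimate applies and gives
\begin{equation*}
\|f^{\alpha}_{\Delta,L_n} - f\|_{\infty} \le \frac{\|\alpha\|_{\infty}}{1-\|\alpha\|_{\infty}}\,\|f-L_n f\|_{\infty}.
\end{equation*}
Now the prefactor is a fixed constant and $\|f-L_n f\|_{\infty}\to 0$ by hypothesis, so the right-hand side vanishes in the limit.

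Honestly, I do not foresee a genuine obstacle here; the substantive work was done in establishing the perturbation bound, and the present statement is a continuity-in-parameters consequence of that single inequality. The only point requiring any care is to verify admissibility of the parameters $(\alpha_n,L)$ in part~(1) and $(\alpha,L_n)$ in part~(2) so that the estimate can actually be invoked in each case; both verifications are contained in the stated hypotheses.
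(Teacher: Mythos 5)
Your proposal is correct and follows essentially the same route as the paper: the authors likewise reduce both parts to the perturbation estimate $\|f^{\alpha}_{\Delta,L}-f\|_{\infty}\le \frac{\|\alpha\|_{\infty}}{1-\|\alpha\|_{\infty}}\|f-Lf\|_{\infty}$ from item (1) of the preceding theorem and simply let $\alpha_n\to 0$ or $L_nf\to f$. Your write-up is in fact more explicit than the paper's two-line argument about checking admissibility of the parameters in each case.
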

 \begin{proof}
 From item (1) in the previous theorem, we note that the uniform error bounds for the process of approximation of $f$ with $f_{\Delta, L}^\alpha$ is given by
 $$ \| f^{\alpha}_{\Delta, L} - f \|_{\infty} \leq \frac{\| \alpha \|_{\infty}}{1- \|\alpha\|_{\infty}}~\| f - Lf\|_{\infty}
 .$$
 From this the required results can be deduced.
 \end{proof}
 \begin{remark}
 For an example of a sequence $(L_n)$ satisfying conditions required in the previous theorem, one can work with the two dimensional  Bernstein operators. Let us recall that if $f(x,y)$ is continuous in the square $S=[0,1] \times [0,1]$, then
 $$\lim_{m, n \to \infty} B_{m,n} f(x,y) =f(x,y),$$
uniformly in $x$ and $y$, as $n, m$ approach infinity in any manner whatsoever.  Here
$$B_{n,m}(x,y)= \sum_{i=0}^n \sum_{k=0}^m f\Big(\frac{i}{n}, \frac{k}{m}\Big) p_{i,n}(x)p_{k,m}(y), $$
where $$p_{j,s}(z)= {s \choose j} z^j (1-z)^{s-j}.$$
Similarly, one can work with many extensions of Bernstein operator known in literature.
\end{remark}
A simple reformulation of the above theorem is  given below.
\begin{theorem}
Let $f \in \mathcal{C}(I \times J, \mathbb{R})$ and $\epsilon >0$. Then there exists a bivariate fractal function $f_{\Delta, L}^\alpha$ obtained via the fractal perturbation process given above such that $$\|f- f_{\Delta, L}^ \alpha\| _\infty < \epsilon.$$
\end{theorem}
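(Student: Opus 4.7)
The plan is to obtain the statement as an immediate corollary of Theorem 2.1 item (1), which provides the uniform error bound
$$\|f - f_{\Delta,L}^{\alpha}\|_{\infty} \le \frac{\|\alpha\|_{\infty}}{1-\|\alpha\|_{\infty}}\,\|f-Lf\|_{\infty}.$$
The strategy is simply to fix any admissible operator $L$ and net $\Delta$, and then to shrink $\|\alpha\|_{\infty}$ to force the right-hand side below $\epsilon$. Equivalently, one can cite the first item of the preceding convergence theorem, which asserts $f^{\alpha_n}_{\Delta,L}\to f$ whenever $\|\alpha_n\|_{\infty}\to 0$; the reformulation is then just the $\varepsilon$-$N$ translation of that limit.

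In detail, the steps I would carry out are the following. First, select any bounded linear $L:\mathcal{C}(I\times J,\mathbb{R})\to\mathcal{C}(I\times J,\mathbb{R})$ satisfying the standing hypotheses $Lf\neq f$ and $(Lf)(x_i,y_j)=f(x_i,y_j)$ on $\partial\Sigma_{N,0}\times\partial\Sigma_{M,0}$; one of the two concrete examples supplied just after (1.4) (for instance, multiplication by a suitable non-constant $t$ that equals $1$ at the corner nodes) serves this purpose. Set $M:=\|f-Lf\|_{\infty}$, which is strictly positive by the requirement $Lf\neq f$. Second, pick any constant $c$ with
$$0 < c < \min\!\left\{\tfrac{1}{2},\ \tfrac{\epsilon}{M+\epsilon}\right\}$$
and take $\alpha$ to be the constant scale function $\alpha(x,y)\equiv c$, which is admissible since $\|\alpha\|_{\infty}=c<1$.

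Third, plug into the error bound from Theorem 2.1(1) to get
$$\|f - f_{\Delta,L}^{\alpha}\|_{\infty} \le \frac{c}{1-c}\,M.$$
An elementary manipulation shows $\frac{c}{1-c}\,M < \epsilon$ is equivalent to $c(M+\epsilon)<\epsilon$, which holds by the choice of $c$. Hence $\|f - f_{\Delta,L}^{\alpha}\|_{\infty}<\epsilon$, completing the proof.

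There is no real obstacle; the only point worth noting is that one must exhibit at least one admissible $L$ (otherwise the statement would be vacuous and the fractal perturbation undefined), and this is guaranteed by the concrete examples already provided in the paper. Everything else is a one-line consequence of the error estimate established earlier.
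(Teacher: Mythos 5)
Your proposal is correct and follows essentially the same route as the paper, which simply says to take $f_{\Delta,L}^{\alpha_n}$ (or $f_{\Delta,L_n}^{\alpha}$) for $n$ large, i.e., to shrink the scale function so that the perturbation bound $\frac{\|\alpha\|_\infty}{1-\|\alpha\|_\infty}\|f-Lf\|_\infty$ falls below $\epsilon$; you have merely made the choice of constant explicit. The only cosmetic remark is that the error estimate you invoke is item (1) of the theorem on elementary properties of $\mathcal{F}^{\alpha}_{\Delta,L}$ (not ``Theorem 2.1''), and your symbol $M$ for $\|f-Lf\|_\infty$ clashes with the paper's later notation $M=2^{\frac12+\frac1p}$, though neither affects the argument.
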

\begin{proof}
Choose $f_{\Delta, L_n}^\alpha$ or $f_{\Delta, L}^{\alpha_n}$ for a suitably large $n$.
\end{proof}
\begin{definition}
  Given a Banach space $X$ and a bounded linear  operator $T: X \to X$,  a subspace $Y \subseteq X $ is called invariant under $T$ or $T$-invariant if $T(Y) \subseteq Y$. We call, $Y$ is an invariant subspace of $T$.
  \end{definition}
  \begin{remark}
  Clearly, $Y=\{0\}$ and $Y=X$ are $T$-invariant subspaces for every bounded linear operator $T: X \to X$. Consequently, one is interested only in the other invariant subspaces, so-called the non-trivial invariant subspaces.
  \end{remark}
  Next pair of lemmas is fundamental in functional analysis; for instance, these are exercises in \cite{BB}.  We give the proofs here for the sake of completeness.
  \begin{lemma} \label{BFOTHM3}
  Let $X$ be a non-separable Banach space, then every bounded linear operator $T: X \to X$ has a non-trivial closed invariant subspace.
  \end{lemma}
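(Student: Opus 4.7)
The plan is to produce a non-trivial closed $T$-invariant subspace by exploiting the fact that orbits of single vectors always generate separable invariant subspaces, whereas the ambient space $X$ is too big to be spanned by any such orbit.

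First I would pick any nonzero vector $x_0 \in X$ (possible since $X$ is non-separable, hence in particular nonzero) and form the orbit $\{x_0, Tx_0, T^2 x_0, \ldots\}$. Let $Y$ be the closed linear span of this countable set,
\[
Y = \overline{\operatorname{span}}\{T^n x_0 : n = 0, 1, 2, \ldots\}.
\]
The invariance $T(Y) \subseteq Y$ is immediate on the dense subspace $\operatorname{span}\{T^n x_0\}$, since $T$ sends $T^n x_0$ to $T^{n+1} x_0 \in Y$; boundedness (hence continuity) of $T$ then propagates the inclusion to the closure $Y$.

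Next I would verify non-triviality. Since $x_0 \in Y$ and $x_0 \neq 0$, we have $Y \neq \{0\}$. On the other hand, $\operatorname{span}\{T^n x_0\}$ is the set of finite rational linear combinations' closure of a countable set, so taking finite $\mathbb{Q}$-linear (or $\mathbb{Q}+i\mathbb{Q}$-linear, in the complex case) combinations of the countable orbit produces a countable dense subset of $Y$. Thus $Y$ is separable. Since $X$ is non-separable, $Y$ cannot equal $X$, giving $Y \neq X$. Therefore $Y$ is a non-trivial closed $T$-invariant subspace.

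No real obstacle is expected; the only point that requires a moment of care is the passage from invariance on the algebraic span to invariance on its closure, which is routine continuity of $T$. The argument hinges on the standard observation that closed spans of countable sets in a Banach space are separable, so the non-separability hypothesis on $X$ is exactly what forces properness of $Y$.
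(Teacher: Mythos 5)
Your proof is correct and follows essentially the same route as the paper: both take the closed linear span of the orbit $\{T^n x_0\}$, note its $T$-invariance by continuity, and use rational linear combinations of the countable orbit to show this subspace is separable, hence proper in the non-separable space $X$. The only cosmetic difference is that the paper phrases the argument as a contradiction while you argue directly.
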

  \begin{proof}
  Suppose $T$ has no non-trivial closed invariant subspace.
  Choose a non-zero element $x$ of $X$ and define $Y_x= \text{span} \big\{x, T(x), T^2(x),...\big\}$. Clearly, $T(Y_x) \subseteq Y_x$. For closedness, we define $ M= \overline{ Y_x}$. It is obvious that $T(M) \subseteq M$, that is $M$ is a closed invariant subspace of $T$. Clearly, $M \neq \{0\}$. If $M=X$, then we obtain a  subset (having rational coefficients in the linear combinations) of $ Y_x$ which is dense in $X$, contradicting  the hypothesis that $X$ is non-separable.
  \end{proof}
\begin{lemma}\label{BFOTHM4}
 If $Y$ is a closed invariant subspace of $T^*$, then $  ^a Y $ is a closed invariant subspace of $T$.
  \end{lemma}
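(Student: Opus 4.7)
The plan is to verify the two assertions of the lemma separately: that $^aY$ is closed, and that $T({}^aY)\subseteq {}^aY$. Both will follow from unpacking the definitions of the preannihilator and the adjoint operator, so no deep machinery will be invoked.

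For closedness, I would observe that
\[
{}^aY \;=\; \bigcap_{f\in Y}\ker f,
\]
where each $f\in Y\subseteq X^{*}$ is a continuous linear functional on $X$. Since $\ker f$ is closed in $X$ for every such $f$, the preannihilator $^aY$ is an intersection of closed subspaces of $X$, hence itself a closed linear subspace. This step is immediate and should take only a line or two in the written proof.

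For the invariance claim, I would take an arbitrary $x\in {}^aY$ and show $Tx\in {}^aY$, i.e.\ $\langle Tx,g\rangle=0$ for every $g\in Y$. Using the defining identity of the adjoint, namely $\langle Tx,g\rangle=\langle x,T^{*}g\rangle$, and the hypothesis that $Y$ is $T^{*}$-invariant, we have $T^{*}g\in Y$; but then $x\in {}^aY$ forces $\langle x,T^{*}g\rangle=0$. Hence $\langle Tx,g\rangle=0$ for all $g\in Y$, which is exactly the condition $Tx\in {}^aY$.

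I do not anticipate any real obstacle: the only thing to be slightly careful about is ensuring the chain of equalities uses the correct pairing (elements of $X$ paired with elements of $X^{*}$), and that the $T^{*}$-invariance is applied to $g\in Y$ rather than to $x$. Since the paper has already fixed the product notation $\langle x,f\rangle=f(x)$ and defined the adjoint via $\langle x,T^{*}g\rangle=\langle Tx,g\rangle$, the write-up will essentially consist of these two short observations stitched together.
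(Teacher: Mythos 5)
Your proposal is correct and follows essentially the same route as the paper: the invariance is obtained by pairing $Tx$ against an arbitrary $g\in Y$ and using $\langle Tx,g\rangle=\langle x,T^{*}g\rangle=0$ via the $T^{*}$-invariance of $Y$, and closedness comes from writing $^aY$ as an intersection of kernels of continuous functionals. No gaps.
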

  \begin{proof}
  Let $y \in T(^aY)$, where $y = T(x)$ for an element $x$ in $^aY$. Let $x^*$ be arbitrary in $Y$.
  \begin{equation*}
  \langle y,x^* \rangle = \langle T(x),x^* \rangle
         = \langle x, T^*(x^*) \rangle
         =0.
  \end{equation*}
  In the above, the first two equalities are obvious whereas the last follows from the hypothesis that $Y$ is an invariant subspace under $T^*$. Finally we have, $T(^ a Y) \subseteq ^a Y$, and being the intersection of null spaces of functional from $X^*$, $^a Y$ is closed.
  \end{proof}
\begin{theorem}
   There exists a non-trivial closed invariant subspace for the fractal operator $ \mathcal{F}_{\Delta, L}^{\alpha}: \mathcal{C}(I \times J, \mathbb{R}) \to \mathcal{C}(I \times J, \mathbb{R})$.
   \end{theorem}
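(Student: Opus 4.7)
The plan is to apply Lemmas \ref{BFOTHM3} and \ref{BFOTHM4} in tandem, lifting a non-trivial closed invariant subspace from the adjoint $(\mathcal{F}_{\Delta,L}^{\alpha})^{*}$ back to $\mathcal{F}_{\Delta,L}^{\alpha}$ itself. The first step is to identify the dual: by the Riesz representation theorem, $\mathcal{C}(I \times J, \mathbb{R})^{*}$ is the space $\mathcal{M}(I \times J)$ of finite signed regular Borel measures on $I \times J$ equipped with the total variation norm, and this space is non-separable since the uncountable family of Dirac measures $\{\delta_{p} : p \in I \times J\}$ satisfies $\|\delta_{p}-\delta_{q}\| = 2$ whenever $p \neq q$, precluding any countable norm-dense subset.

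With the non-separability of the dual in hand, I would invoke Lemma \ref{BFOTHM3} applied to the bounded adjoint $(\mathcal{F}_{\Delta,L}^{\alpha})^{*} : \mathcal{M}(I \times J) \to \mathcal{M}(I \times J)$ to produce a non-trivial closed invariant subspace $Y$ of $(\mathcal{F}_{\Delta,L}^{\alpha})^{*}$, and then Lemma \ref{BFOTHM4} to conclude that the preannihilator ${}^{a}Y$ is a closed invariant subspace of $\mathcal{F}_{\Delta,L}^{\alpha}$.

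The principal obstacle is certifying that ${}^{a}Y$ is non-trivial. A generic orbit closure $Y = \overline{\mathrm{span}\{\mu_{0}, T^{*}\mu_{0}, (T^{*})^{2}\mu_{0}, \ldots\}}$ produced by the proof of Lemma \ref{BFOTHM3} is norm-separable and hence a proper subspace of the non-separable $\mathcal{M}(I \times J)$; yet a norm-closed proper subspace of a dual space can still be weak-$\ast$ dense, which would force ${}^{a}Y = \{0\}$. To sidestep this delicacy, I would specialize the seed vector in the construction to $\mu_{0} = \delta_{p}$, where $p = (x_{i}, y_{j})$ is any interpolation vertex of the net $\Delta$. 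Because every $\alpha$-fractal function interpolates its source at the grid nodes, one has $\langle (\mathcal{F}_{\Delta,L}^{\alpha})^{*}\delta_{p}, f\rangle = f_{\Delta,L}^{\alpha}(p) = f(p) = \langle \delta_{p}, f\rangle$ for every $f \in \mathcal{C}(I \times J, \mathbb{R})$, so $(\mathcal{F}_{\Delta,L}^{\alpha})^{*}\delta_{p} = \delta_{p}$. The orbit therefore collapses and $Y = \mathrm{span}\{\delta_{p}\}$ is a one-dimensional, automatically closed, invariant subspace. Its preannihilator ${}^{a}Y = \{f \in \mathcal{C}(I \times J, \mathbb{R}) : f(p) = 0\}$ is manifestly proper (the constant function $1$ lies outside it) and non-zero (for example $(x-x_{i})(y-y_{j})$ lies in it), furnishing the required non-trivial closed invariant subspace for $\mathcal{F}_{\Delta,L}^{\alpha}$.
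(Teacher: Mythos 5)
Your proposal follows the same skeleton as the paper's proof---pass to the adjoint $(\mathcal{F}_{\Delta,L}^{\alpha})^{*}$ acting on the non-separable dual $\mathcal{C}(I\times J,\mathbb{R})^{*}\cong\mathcal{M}(I\times J)$, invoke Lemma \ref{BFOTHM3} to produce a non-trivial closed invariant subspace $Y$ there, and pull it back via Lemma \ref{BFOTHM4}---but you add a step the paper omits, and that step is genuinely needed. Lemma \ref{BFOTHM4} only asserts that ${}^{a}Y$ is closed and invariant; it does not assert that ${}^{a}Y$ is non-trivial, and this is not automatic: a norm-closed proper subspace of a dual space can be weak-$*$ dense, in which case its preannihilator is $\{0\}$. (For instance, the closed span of $\{\delta_{q_{n}}\}$ for a countable dense set $\{q_{n}\}\subset I\times J$ is separable, hence proper in $\mathcal{M}(I\times J)$, yet annihilates no nonzero continuous function.) So the generic orbit closure supplied by Lemma \ref{BFOTHM3} does not by itself guarantee a non-trivial ${}^{a}Y$, and the paper's final sentence glosses over exactly this point. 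Your device of seeding the orbit with $\delta_{p}$ at a grid node $p=(x_{i},y_{j})$, combined with the interpolation property $f_{\Delta,L}^{\alpha}(x_{i},y_{j})=f(x_{i},y_{j})$, makes $\delta_{p}$ a fixed point of the adjoint, collapses $Y$ to the line $\mathrm{span}\{\delta_{p}\}$, and yields ${}^{a}Y=\{f:f(p)=0\}$, a closed hyperplane that is visibly neither $\{0\}$ nor the whole space; this repairs the gap. One further simplification is available to you: once you have observed that the fractal operator preserves the value at every node, the hyperplane $\{f\in\mathcal{C}(I\times J,\mathbb{R}):f(p)=0\}$ is directly seen to be a non-trivial closed invariant subspace of $\mathcal{F}_{\Delta,L}^{\alpha}$, so the entire adjoint detour can be dispensed with.
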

   \begin{proof}
   We know that  the adjoint of $\mathcal{F}_{\Delta, L}^{\alpha}$ denoted by $ (\mathcal{F}_{\Delta,L}^{\alpha})^* :\big (\mathcal{C}(I \times J, \mathbb{R})\big)^* \rightarrow \big(\mathcal{C}(I \times J, \mathbb{R})\big)^*$ is a bounded linear operator. Since $\big(\mathcal{C}(I \times J, \mathbb{R})\big)^*$ is a non-separable Banach space, from Lemma \ref{BFOTHM3} it follows that $ (\mathcal{F}_{\Delta,L}^{\alpha})^*$ has a non-trivial closed invariant subspace. Lemma \ref{BFOTHM4} now yields that the fractal operator $ \mathcal{F}_{\Delta,L}^{\alpha}$ has a non-trivial closed invariant subspace.
   \end{proof}
   \begin{remark}
Having established the existence, it is natural to ask for a description of a closed invariant subspace of $\mathcal{F}_{\Delta, L}^\alpha$. This still remains an open question.
\end{remark}
The following remarks are straightforward, however worth recording.
 \begin{remark}
 Assume that $\alpha: I \times J \rightarrow \mathbb{R}$ is a nonzero constant function. Furthermore, assume that the mappings $u_i$ and $v_j$ are affine functions, that is, $ u_i(x)=a_ix+b_i$ and $v_j(y)=c_jy+d_j.$  Let  an invariant subspace $A$ of $\mathcal{F}_{\Delta, L}^{\alpha}$ satisfies the following condition, for any $g(x)=\beta x+ \gamma$ and $h(y)=\delta y+ \kappa$, $f\big(g(.),h(.)\big) \in A$ for all $f \in A.$ Then $A$ is invariant subspace for $L$ as well. To see this, let $f \in A.$ The functional equation can be written in the following form
  $$\alpha  Lf(x,y)=f\big( u_i(x),v_j(y)\big)+\alpha f_{\Delta,L}^{\alpha}(x,y)-f_{\Delta,L}^{\alpha}\big( u_i(x),v_j(y)\big).$$
   Since $\mathcal{F}_{\Delta,L}^{\alpha}(A) \subseteq A$  and $f \in A$, we have $f_{\Delta,L}^{\alpha} \in A.$ The condition on $A$ yields $f \big( u_i(x),v_j(y)\big) \in A$ and $f_{\Delta,L}^{\alpha}\big( u_i(x),v_j(y)\big) \in A,$ since $ u_i(x)=a_ix+b_i$ and $v_j(y)=c_jy+d_j.$ Therefore, the function on the right side of the above equation is in $A,$ that is,  $Lf$ is  in $A.$

 \end{remark}
  \begin{remark}
   Consider $I=J=[0,1]$.
  Let $\mathcal{F}_{\Delta,L}^{\alpha}:\mathcal{C}([0,1]^2, \mathbb{R})\rightarrow \mathcal{C}([0,1]^2, \mathbb{R})$ be the fractal operator corresponding to an admissible scale function $\alpha.$
  Let $L$ be the Bernstein operator, that is, $Lf=B_{k,l}(f)$ and $A$ be the set of polynomials of degree at most $(m,n)$ where $m \ge k$ and $n \ge l.$ Clearly, $A$ is invariant under the Bernstein operator $L.$ Moreover, $A$, being finite dimensional, is a closed subspace of $\mathcal{C}([0,1]^2).$  One can easily check that functions in $A$ also satisfy the condition stated in the above remark. We anticipate that in most cases the self-referential function $f_{\Delta, L}^\alpha$ has noninteger box counting dimension and consequently $f \mapsto f_{\Delta, L}^\alpha$ is a roughing operation; see also Remark \ref{BFOrerem1}.  Therefore the class $\mathcal{F}_{\Delta,L}^{\alpha}(A)$ contains non-smooth functions and $\mathcal{F}_{\Delta,L}^{\alpha}(A) \subseteq A$ does not hold, in general.
  \end{remark}
\section{Extension to $\mathcal{L}^p(I \times J,\mathbb{C})$ and some properties}\label{BFOsecc}
 Here we extend the notion of fractal function to $\mathcal{L}^p(I \times J, \mathbb{C})$ spaces, $1 \le p < \infty$. We demonstrate that corresponding to a given $f \in \mathcal{L}^p(I \times J, \mathbb{C})$,
 there exists a self-referential function $\overline{f}_{\Delta,L}^\alpha \in \mathcal{L}^p(I \times J, \mathbb{C})$.
 As an interlude, in Lemma \ref{BFOLemmaa} below, we define a fractal operator on the space of continuous complex valued functions on $I \times J$, denoted by $\mathcal{C}(I\times J, \mathbb{C})$, endowed with the $\mathcal{L}^p$-norm.
\begin{theorem}\label{BFOThma}
Let $\mathcal{C}(I\times J, \mathbb{R})$ be endowed with the $\mathcal{L}^p$-norm, $1 \le p < \infty$,  and $f \in \mathcal{C}(I \times J,\mathbb{R})$. Further, let $L$ be a linear map bounded with respect to the $\mathcal{L}^p-norm$ on $\mathcal{C}(I \times J,\mathbb{R})$. Then the following inequality holds:
$$ \| f - f_{\Delta,L}^\alpha \|_{\mathcal{L}^p} \le  \frac{\| \alpha \|_{\infty}}{1- \|\alpha\|_{\infty}} \|f-Lf\|_{\mathcal{L}^p}.$$
Consequently, the fractal operator $\mathcal{F}_{\Delta,L}^\alpha$ is bounded.
\end{theorem}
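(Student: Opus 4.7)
The plan is to mimic the uniform-norm argument used in item (1) of Theorem \ref{BFOTHM2}, but to carefully exploit the fact that the maps $u_i$ and $v_j$ are affine, so that a change of variables on each subrectangle $I_i\times J_j$ produces a scalar Jacobian, and these Jacobians sum to one over the partition.

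Concretely, I would start from the self-referential equation \eqref{fnleq}, which on $I_i\times J_j$ yields
\[
 f_{\Delta,L}^{\alpha}(x,y)-f(x,y) = \alpha(x,y)\Bigl[f_{\Delta,L}^{\alpha}\bigl(u_i^{-1}(x),v_j^{-1}(y)\bigr)-(Lf)\bigl(u_i^{-1}(x),v_j^{-1}(y)\bigr)\Bigr].
\]
Raising to the $p$-th power, integrating over $I_i\times J_j$, and then performing the substitution $x'=u_i^{-1}(x)$, $y'=v_j^{-1}(y)$ (which is valid because $u_i(x)=a_ix+b_i$ and $v_j(y)=c_jy+d_j$ are affine with Jacobian $a_ic_j$), I obtain
\[
 \int_{I_i\times J_j} |f_{\Delta,L}^{\alpha}-f|^p \le \|\alpha\|_\infty^p\, a_i c_j \int_{I\times J} |f_{\Delta,L}^{\alpha}-Lf|^p .
\]

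Next I would sum over $(i,j)\in\Sigma_N\times\Sigma_M$. Since the $u_i$ partition $I$ into $I_i$ and the $v_j$ partition $J$ into $J_j$, one has $\sum_i a_i=1$ and $\sum_j c_j=1$, so $\sum_{i,j} a_i c_j=1$. Therefore
\[
 \|f_{\Delta,L}^{\alpha}-f\|_{\mathcal{L}^p}^{p}\le \|\alpha\|_\infty^{p}\,\|f_{\Delta,L}^{\alpha}-Lf\|_{\mathcal{L}^p}^{p},
\]
and taking $p$-th roots gives $\|f_{\Delta,L}^{\alpha}-f\|_{\mathcal{L}^p}\le \|\alpha\|_\infty \|f_{\Delta,L}^{\alpha}-Lf\|_{\mathcal{L}^p}$. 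A triangle-inequality insertion $\|f_{\Delta,L}^{\alpha}-Lf\|_{\mathcal{L}^p}\le \|f_{\Delta,L}^{\alpha}-f\|_{\mathcal{L}^p}+\|f-Lf\|_{\mathcal{L}^p}$ then allows me to isolate $\|f_{\Delta,L}^{\alpha}-f\|_{\mathcal{L}^p}$ on the left and conclude
\[
 \|f-f_{\Delta,L}^{\alpha}\|_{\mathcal{L}^p}\le \frac{\|\alpha\|_\infty}{1-\|\alpha\|_\infty}\,\|f-Lf\|_{\mathcal{L}^p}.
\]

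For the boundedness statement, I would write $\|\mathcal{F}_{\Delta,L}^{\alpha} f\|_{\mathcal{L}^p} \le \|f\|_{\mathcal{L}^p}+\|f_{\Delta,L}^{\alpha}-f\|_{\mathcal{L}^p}$ and use the inequality above together with $\|f-Lf\|_{\mathcal{L}^p}\le (1+\|L\|)\|f\|_{\mathcal{L}^p}$ (or $\|Id-L\|\,\|f\|_{\mathcal{L}^p}$ using the $\mathcal{L}^p$ operator norm of $L$) to extract a constant depending only on $\|\alpha\|_\infty$ and $\|L\|$. The only nontrivial ingredient is recognizing that the affineness of $u_i$, $v_j$ is what makes the change of variables yield the clean Jacobian factor $a_ic_j$, and that these weights partition unity; once this is in hand the rest is routine. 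I expect no serious obstacle beyond keeping track of this telescoping over the partition.
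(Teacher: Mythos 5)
Your proposal is correct and follows essentially the same route as the paper: apply the self-referential equation on each $I_i\times J_j$, change variables via the affine $u_i$, $v_j$, sum the Jacobian weights over the partition, and finish with the triangle inequality and the operator-norm bound. The only cosmetic point is that for even $i$ the slope $a_i$ is negative, so the relevant identities are $\sum_i |a_i|=1$ and $\sum_{i,j}|a_i||c_j|=1$ (as the paper writes), which is what the change-of-variables formula produces anyway.
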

\begin{proof}
Note that
\begin{equation*}
\begin{split}
\| f_{\Delta, L}^\alpha - f\|^p_{\mathcal{L}^p} = &~ \int_{I \times J} | f^\alpha _{\Delta,L} (x,y)- f(x,y)|^p \mathrm{d}x \mathrm{d}y\\
   = &~ \sum_{(i,j)} \int_{I_i \times J_j} | f^\alpha _{\Delta,L} (x,y)- f(x,y)|^p \mathrm{d}x \mathrm{d}y.
\end{split}
\end{equation*}
Using the functional equation for $f_{\Delta, L}^\alpha$ given in (\ref{fnleq}) we obtain
\begin{equation*}
\begin{split}
\| f_{\Delta, L}^\alpha - f\|^p_{\mathcal{L}^p} = &~ \sum_{(i,j)} \int_{I_i \times J_j} \Big | \alpha(x,y) \Big[f_{\Delta,L}^\alpha\big(u_i^{-1}(x), v_j^{-1}(y)\big)-Lf\big(u_i^{-1}(x), v_j^{-1}(y)\big)  \Big]  \Big|^p \mathrm{d}x \mathrm{d}y\\
\le &~ \sum_{(i,j)}\int_{I_i \times J_j} \| \alpha\|_\infty ^p \Big |f_{\Delta,L}^\alpha\big(u_i^{-1}(x), v_j^{-1}(y)\big)-Lf\big(u_i^{-1}(x), v_j^{-1}(y)\big)  \Big|^p \mathrm{d}x \mathrm{d}y.
\end{split}
\end{equation*}
 Changing the variable $(x,y)$ to $(\tilde{x},\tilde{y})$ through the transformation $\tilde{x}=u_i^{-1}(x)$, $\tilde{y}=v_j^{-1}(y)$ and using the change of variable formula for double integrals, we have
\begin{equation*}
\begin{split}
\| f_{\Delta, L}^\alpha - f\|^p_{\mathcal{L}^p} \le \sum_{(i,j)}\int_{I \times J} \| \alpha\|_\infty ^p  \big | f_{\Delta,L}^\alpha (\tilde{x},\tilde{y})-Lf (\tilde{x},\tilde{y}) \big|^p  \Big | \frac{\partial(x,y)} {\partial(\tilde{x},\tilde{y})}  \Big| \mathrm{d}\tilde{x} \mathrm{d} \tilde{y}.
\end{split}
\end{equation*}
Therefore
$$ \| f_{\Delta, L}^\alpha - f\|^p_{\mathcal{L}^p} \le \| \alpha\|_\infty ^p  \| f_{\Delta, L}^\alpha - Lf\|^p_{\mathcal{L}^p} \sum_{(i,j)} |a_i| |c_j|.$$
and hence
$$ \| f_{\Delta, L}^\alpha - f\|_{\mathcal{L}^p} \le \| \alpha\|_\infty   \| f_{\Delta, L}^\alpha - Lf\|_{\mathcal{L}^p}.$$
Using this and the triangle inequality
$$ \| f_{\Delta, L}^\alpha - f\|_{\mathcal{L}^p} \le \| \alpha\|_\infty  \big[ \| f_{\Delta, L}^\alpha - f\|_{\mathcal{L}^p} +\|f-Lf\|_{\mathcal{L}^p}\big], $$
whence $$ \| f - f_{\Delta,L}^\alpha \|_{\mathcal{L}^p} \le  \frac{\| \alpha \|_{\infty}}{1- \|\alpha\|_{\infty}} \|f-Lf\|_{\mathcal{L}^p}.$$
From the above bound for the perturbation error  we have
$$ \| f_{\Delta,L}^\alpha\|_{\mathcal{L}^p} - \|f\|_{\mathcal{L}^p} \le \| f - f_{\Delta,L}^\alpha \|_{\mathcal{L}^p} \le  \frac{\| \alpha \|_{\infty}}{1- \|\alpha\|_{\infty}} \|f-Lf\|_{\mathcal{L}^p},$$
using which we infer that
$$ \|\mathcal{F}_{\Delta,L}^\alpha (f) \|_{\mathcal{L}^p} \le  \Big[1+ \frac{\| \alpha \|_{\infty}}{1- \|\alpha\|_{\infty}}  \|Id-L\| \Big] \|f\|_{\mathcal{L}^p}.$$
That is, $\mathcal{F}^\alpha_{\Delta,L}$ is a bounded operator.
\end{proof}
 In what follows, for the notational convenience, we may suppress the dependence on $\Delta$, $L$ to  denote the bivariate $\alpha$-fractal function corresponding to $f$ by $f^\alpha$ and the fractal operator by $\mathcal{F}^{\alpha}$.
 \begin{lemma} \label{BFOLemmaa}
 Let $\mathcal{F}^{\alpha}$ be a fractal operator on $\mathcal{C}(I\times J, \mathbb{R})$, endowed with the $\mathcal{L}^p$-norm. The operator  $\mathcal{F}^{\alpha}_{\mathbb{C}}: \mathcal{C}(I\times J, \mathbb{C}) \rightarrow \mathcal{C}(I\times J, \mathbb{C})$ defined by $$\mathcal{F}^{\alpha}_{\mathbb{C}}(f)= \mathcal{F}^{\alpha}_{\mathbb{C}}(f_1+ i f_2) =  \mathcal{F}^{\alpha}(f_1)+ i~  \mathcal{F}^{\alpha}(f_2)$$  is  a bounded linear operator.
 \end{lemma}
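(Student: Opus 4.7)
The plan is to verify the two defining properties—$\mathbb{C}$-linearity and boundedness with respect to the $\mathcal{L}^p$-norm—separately, leveraging the corresponding facts for $\mathcal{F}^{\alpha}$ on the real-valued space established in Theorem \ref{BFOThma}.

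For linearity, additivity is immediate: writing $f=f_1+if_2$ and $g=g_1+ig_2$, the real linearity of $\mathcal{F}^{\alpha}$ yields
\[
\mathcal{F}^{\alpha}_{\mathbb{C}}(f+g)=\mathcal{F}^{\alpha}(f_1+g_1)+i\,\mathcal{F}^{\alpha}(f_2+g_2)=\mathcal{F}^{\alpha}_{\mathbb{C}}(f)+\mathcal{F}^{\alpha}_{\mathbb{C}}(g).
\]
For complex homogeneity, take $\lambda=a+ib\in\mathbb{C}$ and compute $\lambda f=(af_1-bf_2)+i(bf_1+af_2)$; applying the definition and using real linearity of $\mathcal{F}^{\alpha}$ one obtains
\[
\mathcal{F}^{\alpha}_{\mathbb{C}}(\lambda f)=a\mathcal{F}^{\alpha}(f_1)-b\mathcal{F}^{\alpha}(f_2)+i\bigl(b\mathcal{F}^{\alpha}(f_1)+a\mathcal{F}^{\alpha}(f_2)\bigr)=\lambda\,\mathcal{F}^{\alpha}_{\mathbb{C}}(f),
\]
matching the right-hand side identically. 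This step is purely algebraic.

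For boundedness, set $M:=1+\tfrac{\|\alpha\|_{\infty}\|Id-L\|}{1-\|\alpha\|_{\infty}}$, which by Theorem \ref{BFOThma} bounds $\mathcal{F}^{\alpha}$ on the real space in the $\mathcal{L}^p$-norm. Given $f=f_1+if_2\in \mathcal{C}(I\times J,\mathbb{C})$, I would write
\[
\|\mathcal{F}^{\alpha}_{\mathbb{C}}(f)\|_{\mathcal{L}^p}^p=\int_{I\times J}\bigl(|\mathcal{F}^{\alpha}(f_1)|^2+|\mathcal{F}^{\alpha}(f_2)|^2\bigr)^{p/2}\,\mathrm{d}x\,\mathrm{d}y,
\]
and apply the elementary inequality $(u^2+v^2)^{p/2}\le C_p(u^p+v^p)$ valid for $u,v\ge 0$ (with $C_p=1$ for $p\le 2$ and $C_p=2^{(p-2)/2}$ for $p\ge 2$). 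This reduces the estimate to $C_p\bigl(\|\mathcal{F}^{\alpha}(f_1)\|_{\mathcal{L}^p}^p+\|\mathcal{F}^{\alpha}(f_2)\|_{\mathcal{L}^p}^p\bigr)\le C_pM^p\bigl(\|f_1\|_{\mathcal{L}^p}^p+\|f_2\|_{\mathcal{L}^p}^p\bigr)$. Finally, using the pointwise bound $|f_j|\le|f|$ for $j=1,2$ gives $\|f_1\|_{\mathcal{L}^p}^p+\|f_2\|_{\mathcal{L}^p}^p\le 2\|f\|_{\mathcal{L}^p}^p$, and taking $p$-th roots yields
\[
\|\mathcal{F}^{\alpha}_{\mathbb{C}}(f)\|_{\mathcal{L}^p}\le (2C_p)^{1/p}M\,\|f\|_{\mathcal{L}^p}.
\]

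The main (mild) obstacle is keeping track of the $p$-dependent constant when comparing the modulus of a complex-valued function with the $\mathcal{L}^p$ norms of its real and imaginary parts; this is handled by the two elementary inequalities above and requires no case analysis on $p$ beyond recording $C_p$. Everything else—linearity and passage from the real bound of Theorem \ref{BFOThma} to the complex setting—is essentially routine verification.
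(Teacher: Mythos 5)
Your proof is correct and follows essentially the same route as the paper: linearity is inherited from the real operator, and boundedness is obtained by splitting $|\mathcal{F}^{\alpha}_{\mathbb{C}}(f)|^p=\bigl(|\mathcal{F}^{\alpha}(f_1)|^2+|\mathcal{F}^{\alpha}(f_2)|^2\bigr)^{p/2}$, comparing with $|\mathcal{F}^{\alpha}(f_1)|^p+|\mathcal{F}^{\alpha}(f_2)|^p$, invoking the real bound, and using $|f_j|\le|f|$. The only differences are cosmetic: you spell out complex homogeneity explicitly and record the sharper constant $C_p=1$ for $p\le 2$, whereas the paper uses the uniform factor $2^{p/2}$ throughout and lands on the constant $2^{\frac12+\frac1p}\|\mathcal{F}^{\alpha}\|$ (and reserves the symbol $M$ for that constant rather than for the operator-norm bound, as you do).
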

 \begin{proof}
 Since $\mathcal{F}^{\alpha}$ is linear, $\mathcal{F}^{\alpha}_{\mathbb{C}}$ is linear. It remains to show that $\mathcal{F}^{\alpha}_{\mathbb{C}}$ is a bounded operator. For this,
 \begin{equation} \label{BFOreveq5}
 \begin{split}
 \|\mathcal{F}^{\alpha}_{\mathbb{C}}(f)\|_{\mathcal{L}^p}^p= &~ \int_{I \times J}|\mathcal{F}_{\mathbb{C}}^{\alpha}(f)|^p \mathrm{d}x\mathrm{d}y\\
 =&~\int_{I \times J} \Big[ |\mathcal{F}^{\alpha}(f_1)|^2+ |\mathcal{F}^{\alpha}(f_2)|^2 \Big]^{\frac{p}{2}} \mathrm{d}x\mathrm{d}y\\
 \le &~ 2^{\frac{p}{2}} \int_{I \times J} \Big [|\mathcal{F}^{\alpha}(f_1)|^p+ |\mathcal{F}^{\alpha}(f_2)|^p \Big] \mathrm{d}x\mathrm{d}y\\
 =&~ 2^{\frac{p}{2}} \Big[ \|\mathcal{F}^{\alpha}(f_1) \|_{\mathcal{L}^p}^p + \| \mathcal{F}^{\alpha}(f_2) \|_{\mathcal{L}^p}^p\Big]\\
 \le &~ 2^{\frac{p}{2}}\|\mathcal{F}^{\alpha}\|^p \Big[ \|f_1\|_{\mathcal{L}^p}^p + \|f_2\|_{\mathcal{L}^p}^p   \Big]\\
 =&~2^{\frac{p}{2}}\|\mathcal{F}^{\alpha}\|^p \Big[\int_{I \times J} |f_1|^p \mathrm{d}x\mathrm{d}y +  \int_{I \times J} |f_2|^p \mathrm{d}x\mathrm{d}y
    \Big]\\
    \le &~ 2^{\frac{p}{2}+1}\|\mathcal{F}^{\alpha}\|^p \int_{I \times J} |f|^p \mathrm{d}x\mathrm{d}y \\
    = &~ 2^{\frac{p}{2}+1}\|\mathcal{F}^{\alpha}\|^p \|f\|_{\mathcal{L}^p}^p,
 \end{split}
 \end{equation}
thus we have  $$\|\mathcal{F}^{\alpha}_{\mathbb{C}}(f)\|_{\mathcal{L}^p} \le  2^{\frac{1}{2}+\frac{1}{p}}\|\mathcal{F}^{\alpha}\| \|f\|_{\mathcal{L}^p},$$
  proving that $\mathcal{F}^{\alpha}_{\mathbb{C}}$ is a bounded operator and $\|\mathcal{F}^{\alpha}_{\mathbb{C}} \| \le  2^{\frac{1}{2}+\frac{1}{p}} \|\mathcal{F}^{\alpha}\|.$
\end{proof}
\begin{remark}\label{BFOrevrem5}
The case $p=2$ is better behaved in the following sense.  If $p=2$, then in place of (\ref{BFOreveq5}) we have
 \begin{equation*}
 \begin{split}
 \|\mathcal{F}^{\alpha}_{\mathbb{C}}(f)\|_{\mathcal{L}^2}^2= &~\int_{I \times J}|\mathcal{F}_{\mathbb{C}}^{\alpha}(f)|^2 \mathrm{d}x\mathrm{d}y\\
  =&~\int_{I \times J} \Big[ |\mathcal{F}^{\alpha}(f_1)|^2+ |\mathcal{F}^{\alpha}(f_2)|^2 \Big] \mathrm{d}x\mathrm{d}y\\
  =&~ \| \mathcal{F}^{\alpha}(f_1) \|_{\mathcal{L}^2}^2 + \| \mathcal{F}^{\alpha}(f_2) \|_{\mathcal{L}^2}^2 \\
 \le &~ \| \mathcal{F}^{\alpha}\|^2  \| f_1 + i f_2 \|_{\mathcal{L}^2}^2 \\
 =&~ \| \mathcal{F}^{\alpha}\|^2 \| f\|_{\mathcal{L}^2}^2.
\end{split}
 \end{equation*}
Consequently, $ \|\mathcal{F}^{\alpha}_{\mathbb{C}}\| \le \| \mathcal{F}^{\alpha}\|$, improving the bound obtained in the previous lemma.
\end{remark}
 Let us remind the following fundamental theorem.
 \begin{theorem} \cite{Limaye}
 If an operator $T:Z \rightarrow Y$ is linear and bounded, $Y$ is a Banach space and $Z$ is dense in $X,$ then $T$ can be extended to $X$ preserving the norm of $T.$
 \end{theorem}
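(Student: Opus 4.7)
The plan is to construct the extension $\tilde T : X \to Y$ by a limiting procedure that leverages two ingredients: the density of $Z$ in $X$ (to get approximating sequences) and the completeness of $Y$ (to guarantee limits exist). Specifically, for each $x \in X$, I would pick a sequence $(z_n) \subset Z$ with $z_n \to x$, observe that $(z_n)$ is Cauchy in $X$, and note that the bound
\[
\|T z_n - T z_m\|_Y \le \|T\|\,\|z_n - z_m\|_X
\]
promotes $(T z_n)$ to a Cauchy sequence in $Y$. Completeness of $Y$ then supplies a unique limit, which I would take as the definition $\tilde T(x) := \lim_{n\to\infty} T z_n$.

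Next I would carry out the routine verifications in the following order. (i) Well-definedness: if $z_n \to x$ and $z_n' \to x$ are two approximating sequences in $Z$, then the interleaved sequence $z_1, z_1', z_2, z_2', \dots$ also converges to $x$, so by the argument above $(Tz_n)$ and $(Tz_n')$ have the same limit. (ii) Extension property: for $z \in Z$ use the constant sequence to get $\tilde T(z) = T(z)$. (iii) Linearity: for $x, x' \in X$ and scalars $\lambda, \mu$, take sequences $z_n \to x$, $z_n' \to x'$ and pass to the limit in $T(\lambda z_n + \mu z_n') = \lambda T z_n + \mu T z_n'$, using linearity of $T$ and continuity of vector space operations in $Y$.

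Finally, for norm preservation, starting from $\|T z_n\|_Y \le \|T\|\,\|z_n\|_X$, I would let $n \to \infty$ and invoke continuity of the norm on both sides to obtain $\|\tilde T(x)\|_Y \le \|T\|\,\|x\|_X$ for all $x \in X$; hence $\|\tilde T\| \le \|T\|$. The reverse inequality $\|\tilde T\| \ge \|T\|$ is immediate from the fact that $\tilde T$ agrees with $T$ on $Z$, so the supremum defining $\|\tilde T\|$ is taken over a larger set than that defining $\|T\|$. Equality then yields $\|\tilde T\| = \|T\|$.

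None of these steps presents a serious obstacle; the only subtlety worth flagging is that norm preservation genuinely uses continuity of the norm, and that uniqueness of the extension (should one wish to assert it) follows from continuity of $\tilde T$ together with the density of $Z$, since two continuous maps from $X$ into a Hausdorff space that agree on a dense subset must coincide everywhere.
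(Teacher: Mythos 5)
Your proposal is correct and is precisely the standard density-plus-completeness argument for the bounded linear extension theorem; the paper itself gives no proof, merely citing Limaye, and your argument is the one found in that reference. All the verifications (well-definedness via interleaving, linearity by passing to the limit, and the two inequalities giving $\|\tilde T\|=\|T\|$) are carried out correctly.
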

 In the sequel, we fix the notation $M:= 2^{\frac{1}{2}+\frac{1}{p}}$.
 \begin{theorem}
 Let $1 \le p < \infty$, $\mathcal{C}(I \times J, \mathbb{R})$ be equipped with $\mathcal{L}_p$-norm and
 $\mathcal{F}_{\mathbb{C}}^{\alpha}: \mathcal{C}(I \times J, \mathbb{C}) \rightarrow \mathcal{C}(I \times J,\mathbb{C})$ be the fractal operator defined in
 the previous lemma. Then there exists a bounded linear operator $\overline{\mathcal{F}}^{\alpha}_{\mathbb{C}}: \mathcal{L}^p(I\times J,\mathbb{C}) \rightarrow \mathcal{L}^p(I \times J,\mathbb{C})$ such that its restriction to  $\mathcal{C}(I \times J, \mathbb{C})$ is $\mathcal{F}_{\mathbb{C}}^{\alpha}$ and $\| \mathcal{F}^{\alpha} \| \le\|\overline{\mathcal{F}}_{\mathbb{C}}^{\alpha} \| = \| \mathcal{F}_{\mathbb{C}}^{\alpha} \| \le M \| \mathcal{F}^\alpha\|$.
 \end{theorem}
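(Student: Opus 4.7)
The plan is to apply the extension theorem just cited, so the task splits into checking its hypotheses and then squeezing the norm of the extension between $\|\mathcal{F}^\alpha\|$ and $M\|\mathcal{F}^\alpha\|$. I would take $Z = \mathcal{C}(I \times J, \mathbb{C})$ (with the $\mathcal{L}^p$-norm), $X = Y = \mathcal{L}^p(I \times J, \mathbb{C})$, and $T = \mathcal{F}^\alpha_{\mathbb{C}}$. Since $I \times J$ is a compact rectangle, $\mathcal{L}^p(I \times J, \mathbb{C})$ is a Banach space and $\mathcal{C}(I \times J, \mathbb{C})$ is dense in it for $1 \le p < \infty$; this is standard and I would just cite the relevant measure-theoretic fact. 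Lemma~\ref{BFOLemmaa} gives us the boundedness of $\mathcal{F}^\alpha_{\mathbb{C}}$ on $\mathcal{C}(I \times J, \mathbb{C})$ with $\|\mathcal{F}^\alpha_{\mathbb{C}}\| \le M\|\mathcal{F}^\alpha\|$.

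With those hypotheses in hand, the extension theorem produces a bounded linear operator $\overline{\mathcal{F}}^\alpha_{\mathbb{C}}: \mathcal{L}^p(I \times J, \mathbb{C}) \to \mathcal{L}^p(I \times J, \mathbb{C})$ whose restriction to $\mathcal{C}(I \times J, \mathbb{C})$ coincides with $\mathcal{F}^\alpha_{\mathbb{C}}$ and which satisfies $\|\overline{\mathcal{F}}^\alpha_{\mathbb{C}}\| = \|\mathcal{F}^\alpha_{\mathbb{C}}\|$. Combined with Lemma~\ref{BFOLemmaa}, this immediately gives the upper estimate $\|\overline{\mathcal{F}}^\alpha_{\mathbb{C}}\| \le M\|\mathcal{F}^\alpha\|$.

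The remaining piece is the lower estimate $\|\mathcal{F}^\alpha\| \le \|\overline{\mathcal{F}}^\alpha_{\mathbb{C}}\|$. For this I would simply restrict $\mathcal{F}^\alpha_{\mathbb{C}}$ to real-valued continuous inputs: if $f \in \mathcal{C}(I \times J, \mathbb{R})$, then by the very definition of $\mathcal{F}^\alpha_{\mathbb{C}}$ we have $\mathcal{F}^\alpha_{\mathbb{C}}(f) = \mathcal{F}^\alpha(f)$ (the imaginary component is zero). Hence
\[
\|\mathcal{F}^\alpha(f)\|_{\mathcal{L}^p} = \|\mathcal{F}^\alpha_{\mathbb{C}}(f)\|_{\mathcal{L}^p} \le \|\mathcal{F}^\alpha_{\mathbb{C}}\|\,\|f\|_{\mathcal{L}^p}.
\]
Taking the supremum over $f \in \mathcal{C}(I \times J, \mathbb{R})$ with $\|f\|_{\mathcal{L}^p} \le 1$ yields $\|\mathcal{F}^\alpha\| \le \|\mathcal{F}^\alpha_{\mathbb{C}}\| = \|\overline{\mathcal{F}}^\alpha_{\mathbb{C}}\|$, completing the chain of inequalities.

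There is no genuine obstacle here, since every ingredient has already been prepared: density of $\mathcal{C}$ in $\mathcal{L}^p$ for finite $p$, completeness of $\mathcal{L}^p$, boundedness of $\mathcal{F}^\alpha_{\mathbb{C}}$ from Lemma~\ref{BFOLemmaa}, and the extension theorem itself. The only minor subtlety worth a sentence in the write-up is that the extension theorem preserves the operator norm exactly (giving the middle equality in the displayed chain), while the factor $M = 2^{1/2 + 1/p}$ enters only through the complexification step of Lemma~\ref{BFOLemmaa}.
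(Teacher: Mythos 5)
Your argument matches the paper's proof essentially step for step: both invoke the density of $\mathcal{C}(I \times J, \mathbb{C})$ in $\mathcal{L}^p(I \times J, \mathbb{C})$ together with the norm-preserving extension theorem to produce $\overline{\mathcal{F}}^{\alpha}_{\mathbb{C}}$ with $\|\overline{\mathcal{F}}^{\alpha}_{\mathbb{C}}\| = \|\mathcal{F}^{\alpha}_{\mathbb{C}}\| \le M\|\mathcal{F}^{\alpha}\|$, and both obtain the lower bound $\|\mathcal{F}^{\alpha}\| \le \|\overline{\mathcal{F}}^{\alpha}_{\mathbb{C}}\|$ by restricting to real-valued continuous functions of unit $\mathcal{L}^p$-norm, where $\mathcal{F}^{\alpha}_{\mathbb{C}}$ agrees with $\mathcal{F}^{\alpha}$. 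The proposal is correct and requires no changes.
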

 \begin{proof}
 It is well-known that  $\mathcal{C}(I \times J, \mathbb{C})$ is dense in  $\mathcal{L}^p(I \times J, \mathbb{C}),$ for $1 \le p < \infty$. From the previous lemma, we have
   a bounded linear operator $\mathcal{F}^{\alpha}_{\mathbb{C}}: \mathcal{C}(I\times J, \mathbb{C}) \rightarrow \mathcal{C}(I\times J, \mathbb{C}).$ Now using the previous theorem we conclude that there exists a bounded linear operator $\overline{\mathcal{F}}_{\mathbb{C}}^{\alpha}: \mathcal{L}^p(I\times J,\mathbb{C}) \rightarrow \mathcal{L}^p(I \times J,\mathbb{C})$ such that $$\|\overline{\mathcal{F}}_{\mathbb{C}}^{\alpha} \| = \|\mathcal{F}^{\alpha}_{ \mathbb{C}} \| \le M \| \mathcal{F}^{\alpha} \|.$$ Furthermore, the previous theorem gives $\overline{\mathcal{F}}_{ \mathbb{C}}^{\alpha}(f) = \mathcal{F}^{\alpha}_{ \mathbb{C}} (f)=  \mathcal{F}^{\alpha} (f),~~ \forall~ f \in \mathcal{C}(I \times J, \mathbb{R}).$ Hence
   \begin{equation*}
   \begin{split}
   \| \mathcal{F}^{\alpha} \| :=&~ \sup \big\{ \|\mathcal{F}^{\alpha}(f) \|_{\mathcal{L}^p} : f \in \mathcal{C}(I \times J, \mathbb{R}),\|f\|_{\mathcal{L}^p} =1 \big\}\\ =&~ \sup \big\{\|\overline{\mathcal{F}}_{\mathbb{C}}^{\alpha} (f)\|_{\mathcal{L}^p}: f \in C(I \times J, \mathbb{R}), \|f\|_{\mathcal{L}^p} =1  \big\}.
   \end{split}
   \end{equation*}
 This implies that $ \| \mathcal{F}^{\alpha} \| \le \|\overline{\mathcal{F}}_{\mathbb{C}}^{\alpha} \| $ and hence that $ \| \mathcal{F}^{\alpha} \| \le \|\overline{\mathcal{F}}_{\mathbb{C}}^{\alpha} \| \le M \| \mathcal{F}^{\alpha} \|. $
 \end{proof}
 \begin{remark}
 In view of Remark \ref{BFOrevrem5}, for $p=2$, we have $\|\overline{\mathcal{F}}_{\mathbb{C}}^{\alpha} \|=\| \mathcal{F}^{\alpha} \|$; for univariate counterpart see \cite{M1}.
 \end{remark}
 \begin{lemma}
 Let $L: \mathcal{C}(I \times J, \mathbb{R}) \rightarrow \mathcal{C}(I \times J, \mathbb{R})$ be a bounded linear operator with respect to the $\mathcal{L}^p$-norm on $\mathcal{C}(I \times J, \mathbb{R})$, $1 \le p< \infty$.  Then there exists a bounded linear operator $\overline{L}_{\mathbb{C}}: \mathcal{L}^p(I \times J, \mathbb{C}) \rightarrow \mathcal{L}^p(I \times J, \mathbb{C})$ such that  the restriction of $\overline{L}_{\mathbb{C}}$ to $\mathcal{C}(I \times J, \mathbb{R})$ is  $L$ and $ \|L\| \le \|\overline{L}_{\mathbb{C}}\|\le M \|L\|.$ Moreover, the extension of $Id-L$ is $Id - \overline{L}_{\mathbb{C}} $.
 \end{lemma}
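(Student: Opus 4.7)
The plan is to mimic exactly the two-step extension procedure already used in the paper for the fractal operator $\mathcal{F}^{\alpha}$: first complexify $L$ on the space of continuous functions, then extend by density to $\mathcal{L}^p$. Let me outline the steps.

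First, I would define the complexification $L_{\mathbb{C}}: \mathcal{C}(I\times J,\mathbb{C}) \to \mathcal{C}(I\times J,\mathbb{C})$ by $L_{\mathbb{C}}(f_1+if_2) = L(f_1)+iL(f_2)$, exactly as in Lemma \ref{BFOLemmaa}. Linearity is immediate from linearity of $L$ (checking complex scalar multiplication reduces to the real case via the decomposition into real and imaginary parts). For boundedness, I would repeat essentially verbatim the chain of inequalities displayed in \eqref{BFOreveq5}, invoking the elementary inequality $(a^2+b^2)^{p/2}\le 2^{p/2}(a^p+b^p)$ for non-negative reals and then $\|f_1\|_{\mathcal{L}^p}^p + \|f_2\|_{\mathcal{L}^p}^p \le 2\|f\|_{\mathcal{L}^p}^p$. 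This yields $\|L_{\mathbb{C}}\| \le M\|L\|$ with $M = 2^{\frac{1}{2}+\frac{1}{p}}$.

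Next, since $\mathcal{C}(I\times J,\mathbb{C})$ is dense in $\mathcal{L}^p(I\times J,\mathbb{C})$ for $1\le p<\infty$, the norm-preserving extension theorem (quoted immediately before the statement) applies: the bounded linear map $L_{\mathbb{C}}$ has a unique bounded linear extension $\overline{L}_{\mathbb{C}}:\mathcal{L}^p(I\times J,\mathbb{C})\to\mathcal{L}^p(I\times J,\mathbb{C})$ with $\|\overline{L}_{\mathbb{C}}\| = \|L_{\mathbb{C}}\| \le M\|L\|$. The restriction of $\overline{L}_{\mathbb{C}}$ to $\mathcal{C}(I\times J,\mathbb{R})$ coincides with $L_{\mathbb{C}}|_{\mathcal{C}(I\times J,\mathbb{R})} = L$, and the lower inequality $\|L\|\le \|\overline{L}_{\mathbb{C}}\|$ follows by taking the supremum of $\|\overline{L}_{\mathbb{C}}(f)\|_{\mathcal{L}^p} = \|L(f)\|_{\mathcal{L}^p}$ over the smaller set of real-valued unit continuous functions.

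Finally, the assertion about $Id - L$ is a simple consequence of the linearity of the complexification and of the density extension, both of which respect addition and scalar multiplication. Concretely, $(Id - L)_{\mathbb{C}} = Id_{\mathbb{C}} - L_{\mathbb{C}}$, and extension by density sends $Id_{\mathbb{C}}$ on $\mathcal{C}(I\times J,\mathbb{C})$ to the identity on $\mathcal{L}^p(I\times J,\mathbb{C})$ (since the identity is already bounded on the dense subspace and the extension is unique), so $\overline{(Id-L)}_{\mathbb{C}} = Id - \overline{L}_{\mathbb{C}}$.

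There is no genuine obstacle here — the result is essentially a specialization of the preceding extension argument for $\mathcal{F}^{\alpha}$ to the operator $L$ itself, and the only mild point of care is verifying that the bound $\|L_{\mathbb{C}}\|\le M\|L\|$ is obtained by the same $\ell^p$-versus-$\ell^2$ trick used in \eqref{BFOreveq5}. Everything else is the standard norm-preserving density extension combined with the trivial inequality $\|L\|\le \|\overline{L}_{\mathbb{C}}\|$ obtained by restriction to real continuous functions.
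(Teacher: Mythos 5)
Your proposal is correct and follows essentially the same route as the paper: complexify $L$ exactly as in Lemma \ref{BFOLemmaa} to get $\|L_{\mathbb{C}}\|\le M\|L\|$, extend by density via the norm-preserving extension theorem, obtain the lower bound by restricting to real continuous functions, and deduce the $Id-L$ statement from linearity together with uniqueness of the dense extension (the paper phrases this last step as an explicit sequence argument $f_n\to f$, which is the same thing).
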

 \begin{proof}
 The first part of the lemma can be proved similar to Lemma \ref{BFOLemmaa} via $L_{\mathbb{C}}.$ For the rest, let $f \in  \mathcal{L}^p(I \times J, \mathbb{C})$ and a sequence of complex-valued continuous functions such that $ f_n \rightarrow f$ with respect to the $\mathcal{L}^p$-norm. Then $$ (Id-L)_{\mathbb{C}}f_n = f_n -L_{\mathbb{C}}f_n \rightarrow f- \overline{L}_{\mathbb{C}}f $$ and hence $\overline{(Id - L)_{\mathbb{C}}}=Id - \overline{L}_{\mathbb{C}}.$
\end{proof}
 \begin{lemma}
  For each $f \in \mathcal{C}(I \times J, \mathbb{C}),$ $$\|\mathcal{F}^{\alpha}_{\mathbb{C}}(f)-f\|_{\mathcal{L}^p} \le M \|\alpha\|_{\infty}\|\mathcal{F}^{\alpha}_{\mathbb{C}}(f) - L_{\mathbb{C}}(f)\|_{\mathcal{L}^p}.$$
  \end{lemma}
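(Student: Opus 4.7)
The plan is to reduce the complex-valued estimate to the real-valued one already obtained in the proof of Theorem \ref{BFOThma}. Write $f = f_1 + if_2$ with $f_1, f_2 \in \mathcal{C}(I \times J, \mathbb{R})$. By definition of $\mathcal{F}^\alpha_{\mathbb{C}}$ and by linearity of $L_{\mathbb{C}}$, we have $\mathcal{F}^\alpha_{\mathbb{C}}(f) - f = (\mathcal{F}^\alpha f_1 - f_1) + i(\mathcal{F}^\alpha f_2 - f_2)$ and $\mathcal{F}^\alpha_{\mathbb{C}}(f) - L_{\mathbb{C}}f = (\mathcal{F}^\alpha f_1 - Lf_1) + i(\mathcal{F}^\alpha f_2 - Lf_2)$.

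First, I would extract from the proof of Theorem \ref{BFOThma} the \emph{intermediate} inequality (before invoking the triangle inequality), namely that for any real-valued continuous $g$,
$$\|\mathcal{F}^\alpha g - g\|_{\mathcal{L}^p} \le \|\alpha\|_\infty\, \|\mathcal{F}^\alpha g - Lg\|_{\mathcal{L}^p}.$$
Applying this to $g = f_1$ and $g = f_2$ yields two real-valued estimates that I will combine in the next step.

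Second, set $u_j := \mathcal{F}^\alpha f_j - f_j$ and $v_j := \mathcal{F}^\alpha f_j - Lf_j$. Pointwise one has $(u_1^2+u_2^2)^{p/2} \le \bigl(2\max(u_1^2,u_2^2)\bigr)^{p/2} \le 2^{p/2}(|u_1|^p+|u_2|^p)$, and in the opposite direction $|v_j|^p \le (v_1^2+v_2^2)^{p/2}$. Integrating both estimates and using the two real-valued bounds from the first step gives
\begin{equation*}
\begin{split}
\|\mathcal{F}^\alpha_{\mathbb{C}}(f) - f\|_{\mathcal{L}^p}^{p} &= \int_{I\times J}(u_1^2+u_2^2)^{p/2}\,\mathrm{d}x\mathrm{d}y \\
&\le 2^{p/2}\bigl(\|u_1\|_{\mathcal{L}^p}^{p}+\|u_2\|_{\mathcal{L}^p}^{p}\bigr) \\
&\le 2^{p/2}\|\alpha\|_\infty^{p}\bigl(\|v_1\|_{\mathcal{L}^p}^{p}+\|v_2\|_{\mathcal{L}^p}^{p}\bigr) \\
&\le 2^{p/2+1}\|\alpha\|_\infty^{p}\,\|\mathcal{F}^\alpha_{\mathbb{C}}(f)-L_{\mathbb{C}}f\|_{\mathcal{L}^p}^{p}.
\end{split}
\end{equation*}
Taking $p$-th roots produces the factor $2^{1/2+1/p} = M$, yielding the claimed inequality.

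The main obstacle is that Theorem \ref{BFOThma} as stated bounds $\|\mathcal{F}^\alpha g - g\|$ in terms of $\|g - Lg\|$, not $\|\mathcal{F}^\alpha g - Lg\|$, so one cannot cite it as a black box; one must reach inside its proof and pull out the sharper intermediate inequality before the triangle inequality is used. Once this is done, the remaining work is the routine comparison of the $\ell^2$ and $\ell^p$ magnitudes of the $\mathbb{R}^2$-vector $(u_1(x,y), u_2(x,y))$, which accounts for the constant $M$ exactly as in Lemma \ref{BFOLemmaa} and Remark \ref{BFOrevrem5}.
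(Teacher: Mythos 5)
Your proof is correct and follows essentially the same route as the paper's: decompose $f=f_1+if_2$, invoke the intermediate inequality $\|\mathcal{F}^\alpha g-g\|_{\mathcal{L}^p}\le\|\alpha\|_\infty\|\mathcal{F}^\alpha g-Lg\|_{\mathcal{L}^p}$ from within the proof of Theorem \ref{BFOThma}, and compare the $\ell^2$ and $\ell^p$ magnitudes componentwise to produce the constant $M=2^{\frac{1}{2}+\frac{1}{p}}$. Your explicit observation that one must use the sharper intermediate inequality rather than the stated conclusion of Theorem \ref{BFOThma} is exactly what the paper does (despite its citing the theorem as such), so there is nothing to add.
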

  \begin{proof}
  Let $f \in \mathcal{C}(I \times J, \mathbb{C}),$ where $f=f_1+ i f_2$ for some $f_1,f_2 \in \mathcal{C}(I \times J, \mathbb{R}).$ Since $\mathcal{F}^{\alpha}_{\mathbb{C}}(f):=\mathcal{F}^{\alpha}(f_1)+ i \mathcal{F}^{\alpha}(f_2),$ we have $$ Re\big(\mathcal{F}^{\alpha}_{\mathbb{C}}(f)\big)=\mathcal{F}^{\alpha}(f_1), \quad Im\big(\mathcal{F}^{\alpha}_{\mathbb{C}}(f)\big)=\mathcal{F}^{\alpha}(f_2).$$ Furthermore, Using Theorem \ref{BFOThma} and some basic inequalities, we have  \begin{equation*}
   \begin{split}
   \|\mathcal{F}^{\alpha}_{\mathbb{C}}(f)- f \|_{\mathcal{L}^p}^p= &~ \int_{I \times J}|\mathcal{F}_{\mathbb{C}}^{\alpha}(f)- f|^p \mathrm{d}x\mathrm{d}y\\
   =&~\int_{I \times J} \Big[ |\mathcal{F}^{\alpha}(f_1)-f_1|^2+ |\mathcal{F}^{\alpha}(f_2)-f_2|^2 \Big]^{\frac{p}{2}} \mathrm{d}x\mathrm{d}y\\
   \le &~ 2^{\frac{p}{2}} \int_{I \times J} \Big [|\mathcal{F}^{\alpha}(f_1)-f_1|^p+ |\mathcal{F}^{\alpha}(f_2)-f_2|^p \Big] \mathrm{d}x\mathrm{d}y\\
   =&~ 2^{\frac{p}{2}} \Big[ \|\mathcal{F}^{\alpha}(f_1)-f_1 \|_{\mathcal{L}^p}^p + \| \mathcal{F}^{\alpha}(f_2)- f_2 \|_{\mathcal{L}^p}^p\Big]\\
   \le &~ 2^{\frac{p}{2}}\|\alpha\|_{\infty}^p \Big[ \|\mathcal{F}^{\alpha}(f_1)- L(f_1)\|_{\mathcal{L}^p}^p + \|\mathcal{F}^{\alpha}(f_2)- L(f_2)\|_{\mathcal{L}^p}^p   \Big]\\
   =&~2^{\frac{p}{2}}\|{\alpha}\|_{\infty}^p \Big[\int_{I \times J} |\mathcal{F}^{\alpha}(f_1)- L(f_1)|^p \mathrm{d}x\mathrm{d}y +  \int_{I \times J} |\mathcal{F}^{\alpha}(f_2)- L(f_2)|^p \mathrm{d}x\mathrm{d}y
      \Big]\\
      \le &~ 2^{\frac{p}{2}+1}\|{\alpha}\|_{\infty}^p \int_{I \times J} |\mathcal{F}_{\mathbb{C}}^{\alpha}(f)- L_{\mathbb{C}}(f)|^p \mathrm{d}x\mathrm{d}y \\
      = &~ 2^{\frac{p}{2}+1}\|{\alpha}\|_{\infty}^p \|\mathcal{F}_{\mathbb{C}}^{\alpha}(f)- L_{\mathbb{C}}(f)\|_{\mathcal{L}^p}^p,
   \end{split}
   \end{equation*}
   hence the proof.
\end{proof}
  The proof of the next corollary follows from the way in which $\overline{\mathcal{F}}_{\mathbb{C}}^{\alpha}$ is defined using denseness of $\mathcal{C}(I \times J, \mathbb{C})$ in $\mathcal{L}^p(I \times J, \mathbb{C})$.
 \begin{corollary}
 For each $f \in \mathcal{L}^p(I \times J, \mathbb{C}),$ $$\|\overline{\mathcal{F}}_{\mathbb{C}}^{\alpha}(f)-f\|_{\mathcal{L}^p} \le M \|\alpha\|_{\infty}\|\overline{\mathcal{F}}_{\mathbb{C}}^{\alpha}(f) - \overline{L}_{\mathbb{C}}f\|_{\mathcal{L}^p}.$$
 \end{corollary}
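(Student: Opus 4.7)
The plan is to obtain the corollary from the preceding lemma by a routine density/continuity argument, exploiting the fact that $\mathcal{C}(I\times J,\mathbb{C})$ is dense in $\mathcal{L}^p(I\times J,\mathbb{C})$ for $1\le p<\infty$ and that both $\overline{\mathcal{F}}_{\mathbb{C}}^{\alpha}$ and $\overline{L}_{\mathbb{C}}$ are bounded linear extensions of $\mathcal{F}_{\mathbb{C}}^{\alpha}$ and $L_{\mathbb{C}}$, respectively.

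First, I would fix $f\in\mathcal{L}^p(I\times J,\mathbb{C})$ and select a sequence $(f_n)\subset \mathcal{C}(I\times J,\mathbb{C})$ with $f_n\to f$ in the $\mathcal{L}^p$-norm. By the preceding lemma, each $f_n$ satisfies
$$\|\mathcal{F}^{\alpha}_{\mathbb{C}}(f_n)-f_n\|_{\mathcal{L}^p}\le M\|\alpha\|_{\infty}\,\|\mathcal{F}^{\alpha}_{\mathbb{C}}(f_n)-L_{\mathbb{C}}(f_n)\|_{\mathcal{L}^p}.$$
Because $\overline{\mathcal{F}}^{\alpha}_{\mathbb{C}}$ (respectively $\overline{L}_{\mathbb{C}}$) agrees with $\mathcal{F}^{\alpha}_{\mathbb{C}}$ (respectively $L_{\mathbb{C}}$) on $\mathcal{C}(I\times J,\mathbb{C})$, we may rewrite the previous inequality as
$$\|\overline{\mathcal{F}}^{\alpha}_{\mathbb{C}}(f_n)-f_n\|_{\mathcal{L}^p}\le M\|\alpha\|_{\infty}\,\|\overline{\mathcal{F}}^{\alpha}_{\mathbb{C}}(f_n)-\overline{L}_{\mathbb{C}}(f_n)\|_{\mathcal{L}^p}.$$

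Next, I would pass to the limit as $n\to\infty$. Since $\overline{\mathcal{F}}^{\alpha}_{\mathbb{C}}$ and $\overline{L}_{\mathbb{C}}$ are bounded linear operators on $\mathcal{L}^p(I\times J,\mathbb{C})$, they are (Lipschitz) continuous, so $\overline{\mathcal{F}}^{\alpha}_{\mathbb{C}}(f_n)\to \overline{\mathcal{F}}^{\alpha}_{\mathbb{C}}(f)$ and $\overline{L}_{\mathbb{C}}(f_n)\to \overline{L}_{\mathbb{C}}(f)$ in $\mathcal{L}^p$. Combining this with $f_n\to f$ and the continuity of the $\mathcal{L}^p$-norm (via the reverse triangle inequality), both sides of the displayed inequality converge to the desired quantities, yielding
$$\|\overline{\mathcal{F}}^{\alpha}_{\mathbb{C}}(f)-f\|_{\mathcal{L}^p}\le M\|\alpha\|_{\infty}\,\|\overline{\mathcal{F}}^{\alpha}_{\mathbb{C}}(f)-\overline{L}_{\mathbb{C}}(f)\|_{\mathcal{L}^p}.$$

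There is no serious obstacle here; the only point requiring care is bookkeeping the identifications $\overline{\mathcal{F}}^{\alpha}_{\mathbb{C}}|_{\mathcal{C}}=\mathcal{F}^{\alpha}_{\mathbb{C}}$ and $\overline{L}_{\mathbb{C}}|_{\mathcal{C}}=L_{\mathbb{C}}$ (which come from the norm-preserving extension theorem invoked earlier and the analogous extension lemma for $L$), together with the boundedness of both extensions, to justify the passage to the limit. This is precisely the justification foreshadowed in the statement preceding the corollary.
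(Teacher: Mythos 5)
Your argument is correct and is exactly the route the paper intends: the paper gives no written proof beyond the remark that the corollary "follows from the way in which $\overline{\mathcal{F}}_{\mathbb{C}}^{\alpha}$ is defined using denseness of $\mathcal{C}(I \times J, \mathbb{C})$ in $\mathcal{L}^p(I \times J, \mathbb{C})$," and your density-plus-continuity argument is precisely the standard fleshing-out of that remark.
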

 Using the previous lemma, we deduce the next theorem in a similar way as that in Theorem \ref{BFOTHM2}. We avoid the proof, however recall that for a bounded linear operator $T:X \to X$ in a Hilbert space $X$, the following orthogonal decomposition holds:

 $$X = \overline{\text{Range} (T)} \oplus \text{Ker} (T^*).$$
 \begin{theorem}
  Let $\| \alpha\|_{\infty} = \sup \big\{|\alpha(x,y)|: (x,y) \in I \times J \big\} $, $M \| \alpha \|_{\infty} < 1$ and let $Id$ be the identity operator on $ \mathcal{L}^p(I \times J, \mathbb{C})$.
  \begin{enumerate}
  \item For any $f \in \mathcal{L}^p(I \times J, \mathbb{C})$, the perturbation error satisfies $$\| \overline{\mathcal{F}}_{\mathbb{C}}^{\alpha} (f)- f\|_{\mathcal{L}^p} \leq \frac{M^2 \|\alpha\|_{\infty} \| Id - L\|_{\mathcal{L}^p}}{1-M \|\alpha\|_{\infty}}\|f\|_{\mathcal{L}^p}.$$ In particular, If $\| \alpha\|_{\infty}=0$ then $\overline{\mathcal{F}}_{\mathbb{C}}^{\alpha}=Id.$
  \item   The  norm  of the fractal operator $\overline{\mathcal{F}}_{\mathbb{C}}^{\alpha}: \mathcal{L}^p(I \times J, \mathbb{C}) \to \mathcal{L}^p(I \times J, \mathbb{C})$ satisfies  $$\| \overline{\mathcal{F}}_{\mathbb{C}}^{\alpha}\|_{\mathcal{L}^p} \leq 1+ \frac{M^2 \|\alpha\|_{\infty}~~\| Id - L\|}{1- M \| \alpha\|_{\infty}}.$$
  \item  For $\| \alpha\|_{\infty} < M^{-2}\|L\|^{-1}$, $\overline{\mathcal{F}}_{\mathbb{C}}^{\alpha}$ is bounded below. In particular, $\overline{\mathcal{F}}_{\mathbb{C}}^{\alpha}$ is one to one and the range of $\overline{\mathcal{F}}_{\mathbb{C}}^{\alpha}$ is closed.
  \item  $\| \alpha\|_{\infty} < (M + M^2 \|Id - L\|)^{-1}$, then $\overline{\mathcal{F}}_{\mathbb{C}}^{\alpha}$ has a bounded inverse. Moreover, $$\| \big(\overline{\mathcal{F}}_{\mathbb{C}}^{\alpha}\big)^{-1}\| \leq \frac{1+M \| \alpha\|_{\infty} }{1-M^2 \| \alpha\|_{\infty} \|L\|} .$$
  \item  $\| \alpha\|_{\infty} < (M + M^2 \|Id - L\|)^{-1},$ $ \overline{\mathcal{F}}_{\mathbb{C}}^{\alpha}$ is Fredholm and its index is $0.$
  \item  Let $p=2$ and  $\| \alpha\|_{\infty} < \|L\|^{-1}$, then $\mathcal{L}^2(I \times J, \mathbb{C})= \text{Range}\big(\overline{\mathcal{F}}_{\mathbb{C}}^{\alpha}\big) \oplus \text{Ker}\big((\overline{\mathcal{F}}_{\mathbb{C}}^{\alpha})^*\big)$ where $(\overline{\mathcal{F}}_{\mathbb{C}}^{\alpha})^*$ is the adjoint operator of $\overline{\mathcal{F}}_{\mathbb{C}}^{\alpha}.$ \end{enumerate}
\end{theorem}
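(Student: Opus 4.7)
The plan is to imitate the proof of Theorem \ref{BFOTHM2} item by item, with the corollary immediately preceding this theorem playing the role of the key inequality (\ref{BFOeqw}) and the extension lemma supplying the translation $\|Id - \overline{L}_{\mathbb{C}}\| \le M\|Id - L\|$, $\|\overline{L}_\mathbb{C}\| \le M\|L\|$. All six items are essentially algebraic rearrangements of the single master inequality $\|\overline{\mathcal{F}}^\alpha_\mathbb{C}(f) - f\|_{\mathcal{L}^p} \le M\|\alpha\|_\infty \|\overline{\mathcal{F}}^\alpha_\mathbb{C}(f) - \overline{L}_\mathbb{C} f\|_{\mathcal{L}^p}$ combined with the triangle inequality.

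For item (1), I would insert $\pm f$ inside the right-hand norm of the corollary, apply the triangle inequality, and solve for the error to obtain $(1 - M\|\alpha\|_\infty)\|\overline{\mathcal{F}}^\alpha_\mathbb{C}(f) - f\|_{\mathcal{L}^p} \le M\|\alpha\|_\infty \|(Id - \overline{L}_\mathbb{C})f\|_{\mathcal{L}^p}$, then use the extension lemma to exchange $\|Id - \overline{L}_\mathbb{C}\|$ for $M\|Id-L\|$, which produces the stated bound with the factor $M^2$. The case $\|\alpha\|_\infty = 0$ is immediate. Item (2) then follows by inserting $\pm f$ inside $\|\overline{\mathcal{F}}^\alpha_\mathbb{C}(f)\|_{\mathcal{L}^p}$ and applying (1).

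For item (3), I would run the inequality in the opposite direction, combining the corollary with $\|\overline{L}_\mathbb{C} f\|_{\mathcal{L}^p} \le M\|L\|\,\|f\|_{\mathcal{L}^p}$ to obtain $(1 - M^2\|\alpha\|_\infty\|L\|)\|f\|_{\mathcal{L}^p} \le (1 + M\|\alpha\|_\infty)\|\overline{\mathcal{F}}^\alpha_\mathbb{C}(f)\|_{\mathcal{L}^p}$. Under $\|\alpha\|_\infty < M^{-2}\|L\|^{-1}$ this is exactly the bounded-below estimate; injectivity and closedness of the range are immediate. For item (4), the hypothesis is precisely what is needed to make the bound from item (1) give $\|Id - \overline{\mathcal{F}}^\alpha_\mathbb{C}\| < 1$, so Lemma \ref{BFOel1} produces a bounded inverse, and the quantitative bound on $\|(\overline{\mathcal{F}}^\alpha_\mathbb{C})^{-1}\|$ is read off from the inequality in (3). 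Item (5) then follows at once: invertibility forces $\ker \overline{\mathcal{F}}^\alpha_\mathbb{C} = \ker (\overline{\mathcal{F}}^\alpha_\mathbb{C})^* = \{0\}$ and $\mathrm{Range}(\overline{\mathcal{F}}^\alpha_\mathbb{C}) = \mathcal{L}^p(I \times J, \mathbb{C})$, so the operator is Fredholm with index zero.

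For item (6), I would invoke Remark \ref{BFOrevrem5}, which removes the factor $M$ in the $p=2$ case, so the weaker hypothesis $\|\alpha\|_\infty < \|L\|^{-1}$ is already enough to run the item-(3) argument and conclude that $\overline{\mathcal{F}}^\alpha_\mathbb{C}$ is bounded below on the Hilbert space $\mathcal{L}^2(I \times J, \mathbb{C})$, hence has closed range. The standard Hilbert space orthogonal decomposition $X = \overline{\mathrm{Range}(T)} \oplus \ker T^*$ applied to $T = \overline{\mathcal{F}}^\alpha_\mathbb{C}$ then yields the claim, with the closure redundant. The only real obstacle is careful bookkeeping of the constants $M$ as one passes between the real operator $L$ on $\mathcal{C}(I\times J, \mathbb{R})$ and its complex $\mathcal{L}^p$-extension $\overline{L}_\mathbb{C}$; everything conceptual is already encapsulated in the preceding corollary and extension lemma.
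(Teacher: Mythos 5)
Your proposal is correct and coincides with the paper's intended argument: the paper actually omits the proof of this theorem altogether, saying only that it is deduced from the preceding corollary ``in a similar way as that in Theorem \ref{BFOTHM2}'' and recalling the Hilbert-space decomposition $X = \overline{\text{Range}(T)} \oplus \text{Ker}(T^*)$ for item (6) --- precisely the two ingredients you elaborate. Your bookkeeping of the constant $M$ via the extension lemma, and your appeal to Remark \ref{BFOrevrem5} to drop $M$ in the $p=2$ case so that the weaker hypothesis $\|\alpha\|_{\infty} < \|L\|^{-1}$ suffices for item (6), supply exactly the details the authors left to the reader.
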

 \section{Some approximation aspects} \label{BFOsecd}
 In this section we shall return to the bivariate $\alpha$-fractal functions in the function space $\mathcal{C}(I \times J, \mathbb{R})$. First let us recall the following well-known definition.
 \begin{definition}
 A Schauder basis in an infinite dimensional Banach space $X$ is a sequence $(e_n)$ of elements in $X$ satisfying the following condition: for every $x$ in $X$, there is a unique sequence $(a_n(x))$ of scalars such that
 $$x= \sum_{n=1}^\infty a_n(x) e_n, \quad \text{i.e.,} \quad \Big\|x- \sum_{n=1}^m a_n(x) e_n\Big\| \to 0~~ \text{as}~~ m \to \infty.$$
 The coefficients $a_n(x)$ are linear  functions of $x$ uniquely determined by the basis referred to as the associated sequence of coefficient functionals.
 \end{definition}
 The existence of Schauder bases has many practical applications, for instance,  for finding the best approximation of an element in the space, if it exists. Schauder bases are especially important for applications in operator equations in Banach spaces. In previous section, we studied bivariate fractal functions that are close to the prescribed function, at the same time possessing a self-referential structure.  In some applications, it is required to maintain the global structure involved in a given problem and self-referentiality may be beneficial. For simplicity, let us take $I=J=[0,1]$. In contrast to the case $\mathcal{C}([0,1])$ wherein classical Faber-Schauder system provides a Schauder basis, the situation gets more complicated in the case $\mathcal{C}([0,1]^d)$, $d \ge2$. The tensor products of Faber-Schauder bases in the copies of $\mathcal{C}([0,1])$ form a basis of $\mathcal{C}([0,1]^d)$. Another different basis is the so-called regular pyramidal and squew pyramidal bases. The reader may refer \cite{ZS} for a detailed description of various Schauder bases for $\mathcal{C}([0,1]^d)$. In this instance, we find a Schauder basis for $\mathcal{C}(I \times J, \mathbb{R})$ consisting of fractal functions; the maps involved are perturbations of those belonging to a classical basis for $\mathcal{C}(I \times J, \mathbb{R})$. The central idea is to use the fact that a topological automorphism preserves a Schauder basis, however we provide the details in the following.
\begin{theorem}
There exists a Schauder basis consisting of bivariate fractal functions for the space $\mathcal{C}([0,1]^2, \mathbb{R})$ .
  \end{theorem}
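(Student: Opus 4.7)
The plan is to exploit the hint made in the paragraph preceding the theorem: topological automorphisms of a Banach space carry Schauder bases to Schauder bases, and item (4) of Theorem \ref{BFOTHM2} already gives conditions under which the fractal operator is such an automorphism. So the proof breaks into three essentially independent ingredients: (i) the existence of \emph{some} Schauder basis in $\mathcal{C}([0,1]^2,\mathbb{R})$ taken from classical approximation theory; (ii) a parameter choice making $\mathcal{F}^{\alpha}_{\Delta,L}$ a topological automorphism; (iii) the general functional-analytic fact that topological automorphisms transport Schauder bases.

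First I would fix a net $\Delta$ on $[0,1]\times[0,1]$ and a bounded linear operator $L\colon \mathcal{C}([0,1]^2,\mathbb{R})\to \mathcal{C}([0,1]^2,\mathbb{R})$ satisfying the required interpolation conditions at the vertices $\partial\Sigma_{N,0}\times\partial\Sigma_{M,0}$ and with $Lf\neq f$, for instance a suitably chosen bivariate Bernstein operator as in the preceding remark. I would then pick a continuous scale function $\alpha$ with
\[
\|\alpha\|_{\infty}<\bigl(1+\|Id-L\|\bigr)^{-1}.
\]
Under this condition, item (4) of Theorem \ref{BFOTHM2} gives that the associated fractal operator $\mathcal{F}^{\alpha}_{\Delta,L}\colon \mathcal{C}([0,1]^2,\mathbb{R})\to \mathcal{C}([0,1]^2,\mathbb{R})$ is a bijective bounded linear map with bounded inverse, hence a topological automorphism.

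Next I would invoke a classical Schauder basis $(e_n)_{n\in\mathbb{N}}$ of $\mathcal{C}([0,1]^2,\mathbb{R})$; any of the bases described in \cite{ZS} will do, e.g.\ the tensor product of two copies of the Faber--Schauder system, or the regular pyramidal basis. Set
\[
\tilde{e}_n := \mathcal{F}^{\alpha}_{\Delta,L}(e_n), \qquad n\in\mathbb{N}.
\]
By the very definition of the fractal operator, each $\tilde{e}_n$ is a bivariate $\alpha$-fractal function. The remaining step is the well-known fact that a topological automorphism $T$ on a Banach space $X$ maps Schauder bases to Schauder bases: given $f\in \mathcal{C}([0,1]^2,\mathbb{R})$, apply $(\mathcal{F}^{\alpha}_{\Delta,L})^{-1}$ and expand in $(e_n)$ to obtain unique scalars $a_n(f):= a_n\bigl((\mathcal{F}^{\alpha}_{\Delta,L})^{-1}f\bigr)$ such that $(\mathcal{F}^{\alpha}_{\Delta,L})^{-1}f=\sum_{n=1}^{\infty} a_n(f)\,e_n$; then boundedness of $\mathcal{F}^{\alpha}_{\Delta,L}$ allows passage to the limit termwise and yields
\[
f = \sum_{n=1}^{\infty} a_n(f)\,\tilde{e}_n,
\]
while uniqueness of the coefficients follows from injectivity of $\mathcal{F}^{\alpha}_{\Delta,L}$ combined with uniqueness in the original basis $(e_n)$.

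I do not anticipate a serious obstacle here, since Theorem \ref{BFOTHM2}(4) already provides the topological automorphism and the basis-transport argument is routine. The only mild subtlety is verifying that a concrete operator $L$ (say, a two-dimensional Bernstein operator) can indeed satisfy simultaneously the vertex-interpolation conditions and the smallness assumption $\|\alpha\|_{\infty}<(1+\|Id-L\|)^{-1}$; but this is always achievable by taking $\|\alpha\|_\infty$ sufficiently small, independently of how large $\|Id-L\|$ happens to be.
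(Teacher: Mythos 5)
Your proposal is correct and follows essentially the same route as the paper: choose $\|\alpha\|_{\infty}<(1+\|Id-L\|)^{-1}$ so that Theorem \ref{BFOTHM2}(4) makes $\mathcal{F}^{\alpha}_{\Delta,L}$ a topological automorphism, then transport a classical Schauder basis of $\mathcal{C}([0,1]^2,\mathbb{R})$ through it. The only cosmetic point is that uniqueness of the coefficients is most cleanly obtained by applying the \emph{continuous} inverse $(\mathcal{F}^{\alpha}_{\Delta,L})^{-1}$ termwise to a second representation (as the paper does), rather than by citing injectivity alone; since you already have the bounded inverse, this is immediate.
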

\begin{proof}
Let $(e_n)$ be a Schauder basis of $\mathcal{C}([0,1]^2, \mathbb{R})$, whose existence is hinted at the last paragraph. Choose $\alpha$ such that  $\| \alpha \|_{\infty} < (1 + \|Id - L\|)^{-1}$, so that by Theorem \ref{BFOTHM2}, the fractal operator $\mathcal{F}_{\Delta, L}^\alpha$ is a topological automorphism. If $g \in \mathcal{C}([0,1]^2, \mathbb{R})$ then $\big(\mathcal{F}_{\Delta, L}^\alpha\big)^{-1} (g) \in \mathcal{C}([0,1]^2, \mathbb{R})$ , so that
$$ \big(\mathcal{F}_{\Delta, L}^\alpha\big)^{-1} (g) = \sum_{n=1}^\infty a_n \Big( \big(\mathcal{F}_{\Delta, L}^\alpha\big)^{-1} (g) \Big) e_n.$$
By the continuity of the fractal linear operator $\mathcal{F}_{\Delta, L}^\alpha$ it follows that
$$g= \mathcal{F}_{\Delta, L}^\alpha \big(\mathcal{F}_{\Delta, L}^\alpha\big)^{-1} (g)=\sum_{n=1}^\infty a_n \Big( \big(\mathcal{F}_{\Delta, L}^\alpha\big)^{-1} (g) \Big) {e_n}^\alpha,$$
where $e_n^\alpha = \mathcal{F}_{\Delta, L}^\alpha (e_n).$
Assume that $g= \sum_{n=1}^\infty b_n {e_n}^\alpha$ was another representation of $g$. Since $\big(\mathcal{F}_{\Delta, L}^\alpha\big)^{-1}$ is also continuous, we have $$\big(\mathcal{F}_{\Delta, L}^\alpha\big)^{-1} (g) = \sum_{n=1}^\infty b_n e_n $$ and hence that $b_n = \sum_{n=1}^\infty a_n \Big( \big(\mathcal{F}_{\Delta, L}^\alpha\big)^{-1} (g) \Big)$ for each $n$. Consequently, $(e_n^\alpha)$ is a Schauder basis for $\mathcal{C}([0,1]^2, \mathbb{R})$, obtaining the desired conclusion.
\end{proof}

\begin{definition}
Consider the fractal operator $\mathcal{F}_{\Delta,L}^\alpha:\mathcal{C}(I \times J, \mathbb{R})\rightarrow \mathcal{C}(I \times J, \mathbb{R})$ defined  by $f \mapsto f_{\Delta,L}^\alpha$; see Section \ref{BFOsecb}.  Let $p \in \mathcal{C} (I \times J, \mathbb{R})$ be a bivariate polynomial. Then  $\mathcal{F}_{\Delta,L}^\alpha(p)=p_{\Delta,L}^\alpha$, denoted for simplicity by $p^\alpha$, is referred to as a bivariate fractal polynomial; see also \cite{M2}. Let $\mathcal{P}(I \times J)\subset \mathcal{C}(I \times J, \mathbb{R})$ be the space of all bivariate polynomials, then we denote by $\mathcal{P}^\alpha(I \times J)$, the image space $\mathcal{F}^\alpha_{\Delta,L}\big(\mathcal{P}(I \times J)\big)$.
\end{definition}

\begin{notation}
Let $\mathcal{P}_{m,n}(I \times J)$ be the set of all bivariate polynomials of total degree at most $m+n$ defined on $I \times J$. That is,
$$\mathcal{P}_{m,n}(I \times J)=\Big\{p(x,y)= \sum_{i=0}^{m} \sum_{j=0}^{n} a_{ij}x^i y^j: a_{ij} \in \mathbb{R}, 0 \le i \le m ~~and ~~0 \le j \le n \Big\}.$$
We let $\mathcal{P}_{m,n}^{\alpha} (I \times J)= \mathcal{F}_{\Delta,L}^{\alpha}(\mathcal{P}_{m,n}\big(I \times J)\big)$.
  \end{notation}
\begin{remark}
In fact, given an approximation class $X \subset \mathcal{C}(I \times J, \mathbb{R})$, one can obtain a new class of functions by considering the fractal perturbation of functions in $X$, that is,  by considering $\mathcal{F}_{\Delta,L}^\alpha(X)$. For some reasons, perhaps the physical situation which the approximant is intended to
model, finding an irregular approximant with a specified roughness (quantified in terms of the box counting dimension) from a  subset of $\mathcal{C}(I \times J, \mathbb{R})$ to  a given $f \in   \mathcal{C}(I \times J, \mathbb{R})$ is of interest, and one may tackle it with the perturbed approximation class $\mathcal{F}_{\Delta,L}^\alpha(X)$.
\end{remark}
\begin{lemma}\label{BFOlemmae}
For any admissible choice of $\alpha \in \mathcal{C}(I \times J, \mathbb{R})$, the space $\mathcal{P}_{m,n}^{\alpha} (I \times J)$ is finite dimensional. For $\|\alpha\|_\infty < \|L\|^{-1}$, the dimension of $\mathcal{P}_{m,n}^{\alpha} (I \times J)$ is $\frac{(m+n+2)(m+n+1)}{2}$.
\end{lemma}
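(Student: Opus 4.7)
The proof reduces to invoking linearity and injectivity of $\mathcal{F}^{\alpha}_{\Delta,L}$ (from Theorem \ref{BFOTHM2}) on the finite-dimensional source space $\mathcal{P}_{m,n}(I\times J)$, together with a standard combinatorial count of monomials. I would structure it in three short steps.

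First, for any admissible scale function $\alpha$, Theorem \ref{BFOTHM2}(2) tells us that $\mathcal{F}^{\alpha}_{\Delta,L}:\mathcal{C}(I\times J,\mathbb{R})\to \mathcal{C}(I\times J,\mathbb{R})$ is linear. Since $\mathcal{P}_{m,n}(I\times J)$ is a finite-dimensional subspace, its image $\mathcal{P}_{m,n}^{\alpha}(I\times J):=\mathcal{F}^{\alpha}_{\Delta,L}\bigl(\mathcal{P}_{m,n}(I\times J)\bigr)$ is automatically finite-dimensional, and in fact $\dim \mathcal{P}_{m,n}^{\alpha}(I\times J)\le \dim \mathcal{P}_{m,n}(I\times J)$. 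This disposes of the first assertion with no further work.

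For the precise dimension count under the hypothesis $\|\alpha\|_\infty<\|L\|^{-1}$, I would appeal to Theorem \ref{BFOTHM2}(3), which asserts that $\mathcal{F}^{\alpha}_{\Delta,L}$ is then bounded below on the ambient Banach space; in particular, it is injective, and therefore its restriction to $\mathcal{P}_{m,n}(I\times J)$ is an injective linear map. Since an injective linear map between vector spaces preserves dimension, one gets
$$\dim \mathcal{P}_{m,n}^{\alpha}(I\times J)=\dim \mathcal{P}_{m,n}(I\times J).$$
To finish, I would identify $\mathcal{P}_{m,n}(I\times J)$ with the space of bivariate polynomials of total degree at most $m+n$, which has the monomial basis $\{x^{i}y^{j}:i,j\ge 0,\ i+j\le m+n\}$; linear independence of these monomials on the rectangle $I\times J$ is standard, as a nonzero polynomial cannot vanish on an open subset of $\mathbb{R}^{2}$. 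A counting by total degree gives $\sum_{k=0}^{m+n}(k+1)=\frac{(m+n+1)(m+n+2)}{2}$, which is the stated value.

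No step looks genuinely difficult: the finite-dimensionality is automatic from linearity, and the dimension-preservation is automatic from injectivity that has already been established in Theorem \ref{BFOTHM2}. The only item that one must be slightly careful about is the interpretation of $\mathcal{P}_{m,n}(I\times J)$; the dimension $\frac{(m+n+1)(m+n+2)}{2}$ in the statement matches the ``total degree $\le m+n$'' reading rather than the ``bidegree $\le(m,n)$'' reading, so I would make that identification explicit at the outset of the argument.
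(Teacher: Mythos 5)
Your argument is correct and is essentially the paper's own proof: finite-dimensionality from linearity of $\mathcal{F}^{\alpha}_{\Delta,L}$, exact dimension from injectivity (via boundedness below when $\|\alpha\|_\infty<\|L\|^{-1}$), and the same stars-and-bars/degree count giving $\binom{m+n+2}{2}$. Your closing caveat is well taken --- the displayed formula in the paper's Notation actually describes bidegree $\le(m,n)$ while the stated dimension matches the total-degree-$\le m+n$ reading, and the paper silently adopts the latter, exactly as you do.
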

\begin{proof}
It is well-known that the dimension of $\mathcal{P}_{m,n}(I \times J)$ is $m+n+2 \choose 2$. However, for the sake of exposition let us provide an abbreviated argument here. By a straight forward counting argument,  we see that there are $k+2-1 \choose k$ ways in which $k$ indistinguishable exponents can be distributed to $2$ distinguishable variables. Therefore it follows that the dimension of $\mathcal{P}_{m,n}(I \times J)$ is $\sum_{k=0}^{m+n} {k+2-1 \choose k} = {m+n+2 \choose 2}$:=$r$. Let $\{p_1,p_2,\dots,p_r\}$ be a basis for $\mathcal{P}_{m,n}(I \times J)$. Since $\mathcal{F}_{\Delta,L}^\alpha$ is a linear map, it follows that $\mathcal{P}_{m,n}^{\alpha}$ is spanned by $\{ p_1^\alpha,p_2 ^\alpha,\dots,p_r^\alpha \}$. If  $\|\alpha\|_\infty < \|L\|^{-1}$, then $\mathcal{F}_{\Delta,L}^\alpha$ is injective and hence $\{ p_1^\alpha,p_2 ^\alpha,\dots,p_r^\alpha \}$ is basis for $\mathcal{P}_{m,n}^{\alpha} (I \times J)$, completing the proof.
\end{proof}

Let us recall some basic concepts and a result from approximation theory; see, for instance, \cite{EWC}.

  \begin{definition}
  Let $(X,\|.\|)$ be a normed linear space over $\mathbb{K},$ the field of real or complex numbers. Given a nonempty set $V \subseteq X$ and an element $x \in X ,$ distance from  $x$ to $V$ is defined as $d(x,V)=\inf\{\|x-v\| : v \in V\}.$ If there exists an element $v^*(x) \in V$ such that $\|x- v^*\|= d(x,V),$  we call $v^*$ a best approximant to $x$ from $V.$ A subset $V$ of $X$ is called proximinal (proximal or existence set) if for each $x \in X$ a best approximant $v^*(x) \in V$ of $x$ exists.
  \end{definition}

  \begin{theorem}
  If $V$ is a finite dimensional subspace of the normed linear space $X,$ then for each $x \in X,$ there is a best approximant from $V.$
  \end{theorem}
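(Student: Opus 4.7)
The plan is to use the classical compactness argument: reduce the infimum defining the distance to a minimum over a compact subset of $V$, and then invoke the extreme value theorem.

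First I would fix $x \in X$ and set $d := d(x,V) = \inf\{\|x-v\|: v \in V\}$. Since $0 \in V$, one has $d \le \|x\|$. The natural attempt is to pick a minimizing sequence $(v_n) \subseteq V$ with $\|x - v_n\| \to d$, note that $\|v_n\| \le \|v_n - x\| + \|x\|$ is bounded, and then extract a convergent subsequence using finite-dimensionality. Equivalently, and perhaps cleaner to write up, I would introduce the closed ball $B := \{v \in V: \|v\| \le 2\|x\| + 1\}$ in $V$ and argue that the infimum $d$ is already realised over $B$: for $v \in V \setminus B$, the reverse triangle inequality gives $\|x-v\| \ge \|v\| - \|x\| > \|x\| + 1 \ge d + 1$, so such $v$ cannot approach the infimum. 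Hence $d = \inf_{v \in B}\|x-v\|$.

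Next I would invoke the key structural fact that because $V$ is finite-dimensional, all norms on $V$ are equivalent, and consequently every closed and bounded subset of $V$ is compact (the Heine--Borel property in finite-dimensional normed spaces). In particular, $B$ is compact. The map $\varphi: V \to \mathbb{R}$ defined by $\varphi(v) = \|x - v\|$ is continuous on $V$ by the reverse triangle inequality $|\varphi(v) - \varphi(w)| \le \|v-w\|$. A continuous real-valued function on a non-empty compact set attains its infimum, so there exists $v^* \in B$ with $\varphi(v^*) = \inf_{v \in B}\varphi(v) = d$, giving the desired best approximant.

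There is no genuine obstacle here; the one place where finite-dimensionality is used essentially is in asserting compactness of $B$, which relies on the equivalence of norms in finite dimensions (or, if one prefers, on being able to identify $V$ with $\mathbb{R}^{\dim V}$ via a basis and apply the standard Heine--Borel theorem). In infinite dimensions this step fails, which is precisely why the finite-dimensional hypothesis is indispensable.
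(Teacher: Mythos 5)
Your proof is correct: it is the standard compactness argument (restrict the infimum to a closed bounded, hence compact, subset of the finite-dimensional subspace $V$ and apply the extreme value theorem to the $1$-Lipschitz map $v \mapsto \|x-v\|$), and every step, including the reduction to the ball $B$ and the use of Heine--Borel via equivalence of norms, is sound. The paper states this result without proof as a classical fact recalled from \cite{EWC}, and your argument is exactly the textbook proof being referenced there, so there is nothing to compare beyond noting agreement.
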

  The following theorem is a direct consequence of the previous theorem and Lemma \ref{BFOlemmae}.
  \begin{theorem}
  Let $\mathcal{C}(I \times J, \mathbb{R})$ be endowed with the uniform norm. For each $f \in \mathcal{C}(I \times J, \mathbb{R})$, a best approximant $p^\alpha_f $ in $\mathcal{P}_{m,n}^{\alpha}(I \times J)$ exists.
  \end{theorem}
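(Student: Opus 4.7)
The plan is to invoke the two results cited just above the statement, so this is really an existence argument via finite-dimensional compactness. I would first observe that $\mathcal{P}_{m,n}^\alpha(I\times J)$ is a linear subspace of $\mathcal{C}(I\times J,\mathbb{R})$: indeed, since $\mathcal{F}_{\Delta,L}^\alpha$ is a bounded linear operator on $\mathcal{C}(I\times J,\mathbb{R})$ (by Theorem \ref{BFOTHM2}, item (2)), the image $\mathcal{F}_{\Delta,L}^\alpha(\mathcal{P}_{m,n}(I\times J))$ of the linear subspace $\mathcal{P}_{m,n}(I\times J)$ is again a linear subspace. This is a trivial but necessary observation before applying the finite-dimensional proximinality theorem.

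Next I would quote Lemma \ref{BFOlemmae}, which guarantees that $\mathcal{P}_{m,n}^\alpha(I\times J)$ is finite dimensional (with dimension at most $\binom{m+n+2}{2}$, and equal to that number when $\|\alpha\|_\infty < \|L\|^{-1}$; the precise dimension is immaterial here, only finiteness). Combined with the preceding step, this exhibits $\mathcal{P}_{m,n}^\alpha(I\times J)$ as a finite dimensional subspace of the normed linear space $(\mathcal{C}(I\times J,\mathbb{R}),\|\cdot\|_\infty)$.

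Finally, I would invoke the classical proximinality result (stated as the theorem immediately preceding this one): every finite dimensional subspace $V$ of a normed linear space $X$ is proximinal, i.e., for every $x \in X$ there exists $v^\ast \in V$ with $\|x-v^\ast\| = d(x,V)$. Applying this with $X = \mathcal{C}(I\times J,\mathbb{R})$, $V = \mathcal{P}_{m,n}^\alpha(I\times J)$ and $x = f$ yields the required best approximant $p_f^\alpha \in \mathcal{P}_{m,n}^\alpha(I\times J)$.

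There is really no main obstacle here, since the theorem is advertised as a direct corollary. The only tiny subtlety worth flagging is the verification that the image is a subspace (which relies on linearity of $\mathcal{F}_{\Delta,L}^\alpha$ established in Theorem \ref{BFOTHM2}); once that is noted, the proof reduces to a one-line application of the two quoted results and no quantitative estimate, compactness argument, or continuity argument beyond what is already encapsulated in the classical proximinality theorem is needed.
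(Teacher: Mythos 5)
Your argument is correct and matches the paper exactly: the paper states this theorem as a direct consequence of Lemma \ref{BFOlemmae} (finite dimensionality of $\mathcal{P}_{m,n}^{\alpha}(I \times J)$) and the classical proximinality theorem for finite dimensional subspaces, which is precisely the route you take. Your additional remark that the image of a subspace under the linear operator $\mathcal{F}_{\Delta,L}^{\alpha}$ is again a subspace is a harmless and correct elaboration of a step the paper leaves implicit.
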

 \begin{theorem} \label{BFOTHM5}
 Let $ \mathcal{C}(I \times J, \mathbb{R})$ be endowed with the uniform norm, $f \in \mathcal{C}(I \times J, \mathbb{R}),$ and $L:\mathcal{C}(I \times J, \mathbb{R})\rightarrow \mathcal{C}(I \times J, \mathbb{R}),$ $L \ne Id$ be a bounded linear operator satisfying $(Lf)(x_i,y_j)=f(x_i,y_j), ~~~~\forall~~ (i,j) \in \partial \Sigma_{N,0} \times \partial \Sigma_{M,0}.$
 For any $\epsilon>0,$ net $\Delta$ of the rectangle $I \times J ,$ there exists a bivariate fractal polynomial $p^{\alpha}$  such that
 $$ \|f- p^{\alpha}\|_{\infty} <\epsilon .$$
 \end{theorem}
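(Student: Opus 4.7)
The plan is to combine the classical Weierstrass approximation theorem in two variables with the perturbation estimate (1) from Theorem \ref{BFOTHM2}. By the bivariate Stone–Weierstrass theorem, the algebra $\mathcal{P}(I \times J)$ is uniformly dense in $\mathcal{C}(I \times J, \mathbb{R})$, so for the given $\epsilon>0$ I would first produce a polynomial $p \in \mathcal{P}(I \times J)$ with
$$\|f-p\|_{\infty} < \tfrac{\epsilon}{2}.$$

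Next, I would exploit the freedom in the choice of the scale function $\alpha$. Fixing the net $\Delta$ and the operator $L$ prescribed in the hypothesis, consider the bivariate fractal polynomial $p^{\alpha} := \mathcal{F}_{\Delta,L}^{\alpha}(p)$. By part (1) of Theorem \ref{BFOTHM2},
$$\|p - p^{\alpha}\|_{\infty} \le \frac{\|\alpha\|_{\infty}}{1-\|\alpha\|_{\infty}}\, \|p-Lp\|_{\infty}.$$
Since $p$ is now fixed, the quantity $\|p-Lp\|_{\infty}$ is a fixed finite constant (if it is $0$, the estimate is already trivial). I therefore choose a scale function $\alpha$ — for instance, a constant function — with $\|\alpha\|_{\infty}$ small enough that
$$\frac{\|\alpha\|_{\infty}}{1-\|\alpha\|_{\infty}}\, \|p-Lp\|_{\infty} < \tfrac{\epsilon}{2},$$
which is manifestly possible since the left-hand side tends to $0$ as $\|\alpha\|_{\infty} \to 0$.

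Finally, the triangle inequality yields
$$\|f - p^{\alpha}\|_{\infty} \le \|f-p\|_{\infty} + \|p - p^{\alpha}\|_{\infty} < \tfrac{\epsilon}{2} + \tfrac{\epsilon}{2} = \epsilon.$$
Since $p^{\alpha}$ belongs to $\mathcal{P}^{\alpha}(I \times J)$ by definition, the conclusion follows. There is no genuine obstacle here: the argument is essentially that the fractal operator $\mathcal{F}_{\Delta,L}^{\alpha}$ is a controlled perturbation of the identity whose size is governed by $\|\alpha\|_{\infty}$, so density of polynomials in $\mathcal{C}(I \times J, \mathbb{R})$ transfers to density of fractal polynomials. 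The only care needed is to select $p$ first and then calibrate $\alpha$ against the now-fixed quantity $\|p-Lp\|_{\infty}$, rather than trying to do both simultaneously.
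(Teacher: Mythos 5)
Your proposal is correct and follows essentially the same route as the paper: approximate $f$ by a polynomial $p$ via Stone--Weierstrass to within $\epsilon/2$, then shrink $\|\alpha\|_{\infty}$ so that the perturbation bound from Theorem \ref{BFOTHM2}(1) makes $\|p-p^{\alpha}\|_{\infty}<\epsilon/2$, and conclude by the triangle inequality. The only cosmetic difference is that the paper further bounds $\|p-Lp\|_{\infty}\le \|Id-L\|\,\|p\|_{\infty}$ to give an explicit admissible threshold for $\|\alpha\|_{\infty}$, whereas you work with $\|p-Lp\|_{\infty}$ directly; both are equivalent.
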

 \begin{proof}
 Let $\epsilon >0$ be given. By the Stone-Weierstrass theorem, there exists a polynomial function $ p$ in two variables such that $$ \|f- p\|_{\infty} <\frac{\epsilon}{2} .$$ Fix a net $\Delta$ of the rectangle $I \times J ,$  a bounded linear operator $L:\mathcal{C}(I \times J, \mathbb{R})\rightarrow \mathcal{C}(I \times J, \mathbb{R}),$ $L \ne Id$ satisfying $(Lf)(x_i,y_j)=f(x_i,y_j), ~~~~\forall~~ (i,j) \in \partial \Sigma_{N,0} \times \partial \Sigma_{M,0}.$ Choose $\alpha: I\times J \rightarrow \mathbb{R}$ as continuous function on $I \times J$ with $\|\alpha\|_{\infty}= \sup\big\{|\alpha(x,y)|:(x,y) \in I \times J \big\} < 1$ such that $$ \|\alpha\|_{\infty} <\frac{\frac{\epsilon}{2}}{\frac{\epsilon}{2}+\|Id-L\|~~ \|p\|_{\infty}} .$$
 Then we have
 \begin{equation*}
   \begin{aligned}
    \|f- p^{\alpha}\|_{\infty} & \le \|f- p\|_{\infty}+\|p- p^{\alpha}\|_{\infty}.\\
    & \le \|f- p\|_{\infty}+ \frac{\|\alpha\|_{\infty}}{1-\|\alpha\|_{\infty}}\|Id- L\| ~~\|p\|_{\infty}.\\
    & < \frac{\epsilon}{2} + \frac{\epsilon}{2}.\\
     & = \epsilon.
   \end{aligned}
   \end{equation*}
     In the above, the first inequality is just the triangle inequality, second follows from Theorem \ref{BFOTHM2} and third is obvious.
 \end{proof}
 \begin{remark}
 In the above proof, we selected  $\alpha \in \mathcal{C}(I \times J, \mathbb{R})$, for instance, constants,  such that $\|\alpha\|_{\infty} <\dfrac{\frac{\epsilon}{2}}{\frac{\epsilon}{2}+\|Id-L\|~~ \|p\|_{\infty}} .$ In this case, $\alpha$ may be ``close" to $0$ and hence $p^\alpha$ may lose self-referentiality and behave as a traditional bivariate polynomial. Alternatively, one can fix $\alpha \in \mathcal{C}(I \times J, \mathbb{R})$ such that $\|\alpha\|_{\infty} < 1$, but otherwise arbitrary and choose
a bounded linear operator $L:\mathcal{C}(I \times J, \mathbb{R})\rightarrow \mathcal{C}(I \times J, \mathbb{R}),$ $L \ne Id$ satisfying $(Lf)(x_i,y_j)=f(x_i,y_j), ~~~~\forall~~ (i,j) \in \partial \Sigma_{N,0} \times \partial \Sigma_{M,0}$ such that $$ \|Id-L\|  <\frac{1-\|\alpha\|_{\infty}}{\|\alpha\|_{\infty} \|p\|_{\infty}} \frac{\epsilon}{2} .$$
In this case, we expect that the graph of the corresponding fractal polynomial $p^\alpha$ has the box dimension greater than $2$, thus  possesses a ``fractality" in it and differs from the traditional bivariate polynomial.
 \end{remark}
In view of the previous theorem, we have
\begin{theorem}
 The set of bivariate fractal polynomials with non-null scale vector is dense in $\mathcal{C}(I \times J, \mathbb{R}).$
 \end{theorem}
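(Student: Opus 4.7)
The plan is to observe that this density statement is essentially a direct reformulation of the previous theorem (Theorem \ref{BFOTHM5}), with only the extra observation that the scale function used there can be chosen non-null. So I would organize the argument as follows.

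First, I would fix an arbitrary $f \in \mathcal{C}(I \times J, \mathbb{R})$ and an arbitrary $\varepsilon > 0$. I would then invoke Theorem \ref{BFOTHM5}: for the chosen net $\Delta$ and a suitable bounded linear operator $L \neq Id$ satisfying the interpolatory boundary conditions, there is a continuous scale function $\alpha$ with $\|\alpha\|_\infty < 1$ and a bivariate polynomial $p$ such that $\|f - p^\alpha\|_\infty < \varepsilon$. Since $f$ and $\varepsilon$ are arbitrary, this already gives density of the set of all bivariate fractal polynomials $p^\alpha$ in $\mathcal{C}(I \times J, \mathbb{R})$.

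The remaining issue is to ensure that the scale function in the approximating $p^\alpha$ is non-null. Inspecting the proof of Theorem \ref{BFOTHM5}, the restriction imposed on $\alpha$ is of the form
\[
\|\alpha\|_\infty < \frac{\varepsilon/2}{\varepsilon/2 + \|Id - L\|\,\|p\|_\infty},
\]
which is a strictly positive upper bound. Hence one may, for example, take $\alpha$ to be a non-zero constant function whose absolute value lies strictly between $0$ and this bound, so that $\alpha$ is admissible (i.e.\ $\|\alpha\|_\infty < 1$) and genuinely non-null. With this choice the error estimate in the proof of Theorem \ref{BFOTHM5} still produces $\|f - p^\alpha\|_\infty < \varepsilon$.

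I do not anticipate any real obstacle beyond this bookkeeping, since all the heavy lifting — the Stone--Weierstrass approximation of $f$ by a polynomial $p$ together with the perturbation bound $\|p - p^\alpha\|_\infty \le \tfrac{\|\alpha\|_\infty}{1-\|\alpha\|_\infty}\|Id - L\|\,\|p\|_\infty$ from Theorem \ref{BFOTHM2} — has already been done in Theorem \ref{BFOTHM5}. Thus the proof reduces to one line: apply Theorem \ref{BFOTHM5} with a non-zero admissible $\alpha$, and conclude that $\{p^\alpha : p \in \mathcal{P}(I \times J),\ \alpha \not\equiv 0\}$ is dense in $\mathcal{C}(I \times J, \mathbb{R})$.
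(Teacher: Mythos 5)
Your proposal is correct and follows exactly the route the paper takes: the paper states this theorem with the single phrase ``In view of the previous theorem, we have,'' i.e.\ it is a direct corollary of Theorem \ref{BFOTHM5}, and your observation that the upper bound on $\|\alpha\|_\infty$ there is strictly positive (so a non-null admissible $\alpha$ can be chosen) is precisely the point being implicitly used.
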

 In the next theorem,  we provide the denseness of a class of bivariate fractal polynomials which is a proper subset of the dense set considered above. This theorem reveals that one single scale vector is sufficient to obtain a bivariate fractal polynomial
approximation of any bivariate  continuous function.
 \begin{theorem}
 If $\|\alpha\|_{\infty}< (1+\|Id-L\|)^{-1},$ then $\mathcal{P}^{\alpha}(I \times J)$ is dense in $\mathcal{C}(I \times J, \mathbb{R}).$
 \end{theorem}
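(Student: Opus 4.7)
The plan is to leverage the fact that under the stated hypothesis $\|\alpha\|_\infty < (1+\|Id-L\|)^{-1}$, Theorem \ref{BFOTHM2}(4) guarantees that $\mathcal{F}_{\Delta,L}^{\alpha}$ is a topological automorphism of $\mathcal{C}(I \times J, \mathbb{R})$. Thus the argument reduces to the general observation that a topological automorphism of a Banach space maps dense subsets to dense subsets, combined with the classical Stone-Weierstrass theorem asserting that $\mathcal{P}(I \times J)$ is dense in $\mathcal{C}(I \times J, \mathbb{R})$.

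Concretely, I would proceed as follows. Fix $g \in \mathcal{C}(I \times J, \mathbb{R})$ and $\epsilon > 0$. First, invoke Theorem \ref{BFOTHM2}(4) to define $h := (\mathcal{F}_{\Delta,L}^{\alpha})^{-1}(g) \in \mathcal{C}(I \times J, \mathbb{R})$; the bounded inverse exists precisely because of the hypothesis on $\|\alpha\|_\infty$. Next, apply Stone-Weierstrass to obtain a bivariate polynomial $p \in \mathcal{P}(I \times J)$ such that
$$\|h - p\|_\infty < \frac{\epsilon}{\|\mathcal{F}_{\Delta,L}^{\alpha}\|},$$
where the operator norm is finite (and positive) by Theorem \ref{BFOTHM2}(2). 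Finally, using the linearity and boundedness of $\mathcal{F}_{\Delta,L}^{\alpha}$, estimate
$$\|g - p^{\alpha}\|_\infty = \|\mathcal{F}_{\Delta,L}^{\alpha}(h - p)\|_\infty \leq \|\mathcal{F}_{\Delta,L}^{\alpha}\| \cdot \|h - p\|_\infty < \epsilon,$$
so $p^\alpha \in \mathcal{P}^\alpha(I \times J)$ provides the desired approximation of $g$.

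There is no real obstacle here; the result is essentially a clean corollary of the two ingredients already established earlier in the paper, namely the invertibility assertion in Theorem \ref{BFOTHM2}(4) and the classical density of polynomials. The only minor point to handle cleanly is the trivial case $\|\mathcal{F}_{\Delta,L}^\alpha\|=0$, which does not occur since $\mathcal{F}_{\Delta,L}^\alpha$ is a bijection onto a nontrivial space. It is worth noting that this argument strengthens the preceding Theorem \ref{BFOTHM5} in a meaningful way: there, the scale function $\alpha$ had to be chosen depending on the target $f$ and the approximating polynomial $p$, whereas here a single admissible $\alpha$ works uniformly for all targets, confirming that the whole fractal image $\mathcal{P}^\alpha(I \times J)$ is dense rather than merely the union over all admissible $\alpha$.
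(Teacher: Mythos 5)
Your argument is correct and is essentially the same as the paper's: both pull $g$ back through $(\mathcal{F}_{\Delta,L}^{\alpha})^{-1}$ (available by Theorem \ref{BFOTHM2}(4)), approximate the preimage by a polynomial via Stone--Weierstrass, and push forward using the boundedness of $\mathcal{F}_{\Delta,L}^{\alpha}$. The paper phrases this with a convergent sequence $p_n \to (\mathcal{F}_{\Delta,L}^{\alpha})^{-1}(f)$ rather than an explicit $\epsilon/\|\mathcal{F}_{\Delta,L}^{\alpha}\|$ estimate, but the content is identical.
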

 \begin{proof}
  Note that  under the given condition on $\alpha,$ $\mathcal{F}_{\Delta,L}^{\alpha}$ is a topological automorphism . Let $f \in \mathcal{C}(I \times J, \mathbb{R})$. By the Stone-Weierstrass theorem, there exists a sequence of bivariate polynomials $(p_n)$ such that $p_n \to (\mathcal{F}_{\Delta,L}^\alpha)^{-1}(f)$ in the uniform norm. Now since $\mathcal{F}_{\Delta,L}^\alpha$ is bounded, we obtain $p_n^\alpha:=\mathcal{F}_{\Delta,L}^\alpha(p_n) \to f$ as $n \to \infty$, and with it the proof.
 \end{proof}
\begin{definition} \cite{Gal}
Let $\mathcal{C}( [-1,1]^2,\mathbb{R})$ be supplied with the uniform norm and $f \in \mathcal{C}( [-1,1]^2,\mathbb{R})$. The Ditzian-Totik modulus of smoothness is defined as
$$ \omega_r^{\phi} (f,\delta_1, \delta_2)= \sup_{0<h_i \le \delta_i, i=1,2} \big|\overline{\Delta}_{h_1 \phi(x), h_2 \phi(y)}^r  f (x,y) \big|,$$
where $\phi(x)= \sqrt{1-x^2}$ and $r$-th symmetric difference of the function $f$ is given by
$$ \overline{\Delta}_{h_1 \phi(x),h_2 \phi(y)}^r f(x,y)= \sum_{k=0}^r (-1)^{k} {r \choose k} f\Big(x+h_1\phi(x) (\frac{r}{2}-k),y + h_2\phi(y) (\frac{r}{2}-k) \Big)$$
if $\big(x \pm r h_1 \phi(x)/2, y \pm r h_2 \phi(y)/2 \big) \in [-1,1]^2, $ $\overline{\Delta}_{h_1 \phi(x),h_2 \phi(y)}^r f(x,y)=0$ elsewhere.
\end{definition}
\begin{theorem} \cite{Gal}\label{BFOTHM7}
If $f$ is real-valued continuous on $[-1, 1]^2,$ a sequence of bivariate polynomials $(P_{m,n}(f))_{m,n \in \mathbb{N}}$ exists, with degree $\le m+n, $ such that  $$ \| f - P_{m,n}(f)\| \le C~~ \omega_2^{\phi}\Big(f;\frac{1}{ m},\frac{1}{ n}\Big),$$ where $C >0$ is independent of  $f, m$ and $n,$ and $\omega_2^{\phi}\Big(f;\frac{1}{m},\frac{1}{n}\Big)$ is the Ditzian-Totik modulus of smoothness with $ \phi(x)= \sqrt{1-x^2}.$

  \end{theorem}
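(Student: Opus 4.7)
The plan is to derive this bivariate Jackson--Favard type inequality by a two-variable reduction to the classical univariate Ditzian--Totik Jackson theorem, which asserts that for every $g \in \mathcal{C}([-1,1], \mathbb{R})$ there exists a sequence of univariate polynomials $q_n(g)$ with $\deg q_n(g) \le n$ such that $\|g - q_n(g)\|_\infty \le C_0\,\omega_2^\phi(g; 1/n)$, with $C_0$ absolute. The construction is typically realized by a Jackson kernel or a de la Vall\'ee Poussin mean of a Chebyshev expansion, both of which are \emph{linear} in $g$; I will use linearity crucially.

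First I would freeze the second variable: for every $y \in [-1,1]$, apply the univariate theorem to $x \mapsto f(x,y)$ to get a polynomial $Q_m(f)(x,y)$ of $x$-degree at most $m$ whose coefficients, being linear functionals of $f(\cdot,y)$, depend continuously on $y$. Standard pointwise estimates then yield $\|f - Q_m(f)\|_\infty \le C_0 \sup_y \omega_2^\phi\bigl(f(\cdot,y); 1/m\bigr)$, which is dominated by the partial Ditzian--Totik modulus $\omega_2^\phi(f; 1/m, 0)$. Next I would approximate $Q_m(f)$ in the $y$-variable: because the operator taking $f$ to $Q_m(f)$ is a bounded linear projector whose norm is controlled uniformly in $m$, applying the univariate Jackson theorem to $y \mapsto Q_m(f)(x,y) - f(x,y)$ (actually to the $y$-sections of the approximation) produces a bivariate polynomial $P_{m,n}(f)$ of total degree at most $m+n$. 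The triangle inequality gives
\begin{equation*}
\|f - P_{m,n}(f)\|_\infty \le C_1\,\omega_2^\phi(f; 1/m, 0) + C_2\,\omega_2^\phi(f; 0, 1/n).
\end{equation*}

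To collapse the right-hand side into the mixed modulus $\omega_2^\phi(f; 1/m, 1/n)$, I would invoke the standard equivalence (up to constants) between partial Ditzian--Totik moduli and the two-variable modulus for bounded rectangles, together with the monotonicity of $\omega_2^\phi$ in each step size. Setting $C := \max(C_1, C_2)$ multiplied by the equivalence constant yields the claim with a $C$ that depends neither on $f$ nor on $(m,n)$.

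The main obstacle is the bookkeeping at the weighted boundary: the weight $\phi(x)\phi(y)$ degenerates at $\{|x|=1\}\cup\{|y|=1\}$, and a naive tensor-product argument loses a factor near the corners because the one-dimensional Jackson kernel does not respect the $y$-weight when $x$ is near $\pm 1$. Overcoming this requires using a specific construction (a Jackson polynomial built from the Chebyshev system, or equivalently a trigonometric regularization after the substitution $x=\cos\theta$, $y=\cos\varphi$), for which the two partial $\omega_2^\phi$ moduli genuinely control the full bivariate one; this is precisely the delicate content of the cited result in \cite{Gal}, which I would quote once the reduction is set up.
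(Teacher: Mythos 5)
The paper offers no proof of this statement: it is quoted verbatim from the reference \cite{Gal} (Gal's 2002 paper on shape-preserving bivariate polynomial approximation), so there is no internal argument to compare yours against. Judged on its own, your two-stage tensor reduction is the standard and essentially correct skeleton for such Jackson-type estimates: approximate in $x$ by a uniformly bounded \emph{linear} polynomial operator $Q_m$ realizing the univariate second-order Ditzian--Totik bound, then apply an analogous operator $R_n$ in $y$ to $Q_m(f)$, use $f - R_nQ_mf = (f - Q_mf) + (Q_mf - R_nQ_mf)$ together with the fact that $Q_m$ commutes with $y$-differences (so $\omega_2^\phi(Q_mf(x,\cdot);1/n) \le \|Q_m\|\,\sup_x\omega_2^\phi(f(x,\cdot);1/n)$), and note that the degree in $x$ plus the degree in $y$ is at most $m+n$.

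Two points in your write-up deserve tightening, one of which is a genuine gap. First, the ``collapse'' step does not require any equivalence of moduli: with the paper's definition of $\overline{\Delta}^2_{h_1\phi(x),h_2\phi(y)}f$, letting $h_2\to 0^+$ (using continuity of $f$ and the fact that $\phi(\pm 1)=0$ keeps the boundary clause harmless) shows that each partial modulus is bounded \emph{above} by the joint modulus $\omega_2^\phi(f;\delta_1,\delta_2)$ with constant $1$; only this easy direction is needed, so you should state it as such rather than invoking an unproven two-sided equivalence. Second, and more seriously, your final paragraph concedes that the construction handling the degenerating weight near the boundary ``is precisely the delicate content of the cited result in \cite{Gal}, which I would quote once the reduction is set up'' --- that is circular: you cannot prove the theorem by deferring its hard part to the theorem itself. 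To close the argument you must instead cite (or construct) the \emph{univariate} linear polynomial operators with uniformly bounded norms satisfying $\|g - Q_mg\|_\infty \le C_0\,\omega_2^\phi(g;1/m)$ --- these exist (e.g., the linear realizations in Ditzian and Totik's monograph) --- and then the boundary bookkeeping is absorbed entirely into that univariate result via the commutation identity above, with no further bivariate delicacy required.
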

  \begin{notation}
   We define
  $E_{m,n}(f):= \inf\{\|f- p\|_{\infty}: p \in \mathcal{P}_{m,n}(I \times J)\} $ and
  $E^{\alpha}_{m,n}(f):= \inf\{\|f- p^{\alpha}\|_{\infty}: p^{\alpha} \in \mathcal{P}_{m,n}^{\alpha} (I \times J)\}.$
\end{notation}
  \begin{theorem}
    If $f$ is real-valued continuous on $[-1,1]^2$, then the following estimate holds:

    $$ E^{\alpha}_{m,n}(f) \le C ~\frac{1+\|\alpha\|_\infty \big(\|Id-L\|-1 \big)}{1-\|\alpha\|_\infty} \omega_2^{\phi} \Big(f;\frac{1}{ m},\frac{1}{ n}\Big)+ \frac{\|\alpha\|_{\infty} \| Id - L\|}{1-\|\alpha\|_{\infty}}\|f\|_{\infty},$$ where $C >0$ is an absolute constant.

    \end{theorem}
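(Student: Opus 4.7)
The plan is to take a near-best classical polynomial approximant of $f$ provided by Theorem \ref{BFOTHM7}, form its fractal perturbation, use the triangle inequality to split the error into the classical approximation error and the fractal perturbation error, and finally use the estimate from item (1) of Theorem \ref{BFOTHM2} to control the latter.

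First, given $m,n$, invoke Theorem \ref{BFOTHM7} to obtain a polynomial $P_{m,n}(f)\in \mathcal{P}_{m,n}(I\times J)$ with
$$\|f-P_{m,n}(f)\|_\infty \le C\,\omega_2^\phi\!\left(f;\tfrac{1}{m},\tfrac{1}{n}\right).$$
Its fractal perturbation $p^\alpha := \mathcal{F}^\alpha_{\Delta,L}\bigl(P_{m,n}(f)\bigr)$ lies in $\mathcal{P}^\alpha_{m,n}(I\times J)$, so $E^\alpha_{m,n}(f)\le \|f-p^\alpha\|_\infty$, and a triangle inequality gives
$$\|f-p^\alpha\|_\infty \le \|f-P_{m,n}(f)\|_\infty + \|P_{m,n}(f) - p^\alpha\|_\infty.$$

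Next, apply item (1) of Theorem \ref{BFOTHM2} (the perturbation bound $\|g-g^\alpha_{\Delta,L}\|_\infty\le \tfrac{\|\alpha\|_\infty}{1-\|\alpha\|_\infty}\|g-Lg\|_\infty$) to $g=P_{m,n}(f)$, together with $\|g-Lg\|_\infty\le \|Id-L\|\,\|g\|_\infty$, to obtain
$$\|P_{m,n}(f)-p^\alpha\|_\infty \le \frac{\|\alpha\|_\infty\,\|Id-L\|}{1-\|\alpha\|_\infty}\,\|P_{m,n}(f)\|_\infty.$$
A further triangle inequality controls the polynomial norm by the data: $\|P_{m,n}(f)\|_\infty \le \|f\|_\infty + C\,\omega_2^\phi(f;\tfrac{1}{m},\tfrac{1}{n})$.

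Assembling these pieces gives
$$\|f-p^\alpha\|_\infty \le C\,\omega_2^\phi\!\left(f;\tfrac{1}{m},\tfrac{1}{n}\right)\left[1+\frac{\|\alpha\|_\infty\,\|Id-L\|}{1-\|\alpha\|_\infty}\right] + \frac{\|\alpha\|_\infty\,\|Id-L\|}{1-\|\alpha\|_\infty}\,\|f\|_\infty,$$
and simplifying the first bracket as $\tfrac{1+\|\alpha\|_\infty(\|Id-L\|-1)}{1-\|\alpha\|_\infty}$ yields exactly the claimed bound after passing to the infimum over $p^\alpha\in \mathcal{P}^\alpha_{m,n}(I\times J)$.

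There is no real obstacle here: the argument is purely a triangle-inequality plus a direct application of the perturbation estimate of Theorem \ref{BFOTHM2}(1), once the Ditzian--Totik polynomial approximant of Theorem \ref{BFOTHM7} is in hand. The only mild subtlety is to bound $\|P_{m,n}(f)\|_\infty$ in terms of $\|f\|_\infty$ and the modulus of smoothness so as to separate the $\|f\|_\infty$-dependent term from the $\omega_2^\phi$-term in the final estimate; this is what produces the two distinct contributions on the right-hand side.
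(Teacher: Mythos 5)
Your proposal is correct and follows essentially the same route as the paper: a triangle-inequality split, the perturbation bound of Theorem \ref{BFOTHM2}(1) applied to the polynomial, and a further triangle inequality to trade $\|p\|_\infty$ for $\|f\|_\infty$ plus the approximation error. The only cosmetic difference is that you work directly with the near-best polynomial $P_{m,n}(f)$ from Theorem \ref{BFOTHM7}, whereas the paper runs the argument with a best approximant $p_f$ and invokes Theorem \ref{BFOTHM7} only at the end to bound $E_{m,n}(f)$; both yield the identical estimate.
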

    \begin{proof}
   Let $f \in \mathcal{C}([-1,1]^2,\mathbb{R}).$ Let $p_f \in \mathcal{P}_{m,n}([-1,1]^2)$ be a best approximant to $f.$ That is, $E_{m,n}(f)=\| f - p_f \|_{\infty} .$ Using the previous theorem, we  estimate a bound for $E_{m,n}^{\alpha}(f)$ in the following manner:

    \begin{equation*}
           \begin{aligned}
              E_{m,n}^{\alpha}(f) \le &~ \| f - (p_f) ^{\alpha}\|_{\infty}\\
               \le &~ \| f - p_f \|_{\infty} + \| p_f - (p_f)^{\alpha}\|_{\infty}\\
               \le &~ E_{m,n}(f) + \frac{\|\alpha\|_{\infty} \| Id - L\|}{1-\|\alpha\|_{\infty}}\|p_f\|_{\infty}\\
               \le &~ E_{m,n}(f) + \frac{\|\alpha\|_{\infty} \| Id - L\|}{1-\|\alpha\|_{\infty}} \|p_f-f+f\|_\infty\\
               \le &~ E_{m,n}(f)\Big[ 1+ \frac{\|\alpha\|_{\infty} \| Id - L\|}{1-\|\alpha\|_{\infty}}\Big]+ \frac{\|\alpha\|_{\infty} \| Id - L\|}{1-\|\alpha\|_{\infty}} \|f\|_\infty\\
               \le &~ \frac{1+\|\alpha\|_\infty \big(\|Id-L\|-1 \big)}{1-\|\alpha\|_\infty} E_{m,n}(f)+  \frac{\|\alpha\|_{\infty} \| Id - L\|}{1-\|\alpha\|_{\infty}} \|f\|_\infty
\end{aligned}
           \end{equation*}
    hence by the previous theorem, we obtain the result.
\end{proof}

\bibliographystyle{amsplain}

\end{document}